\newtheorem{theorem}{Theorem}[section]
\newtheorem{lemma}{Lemma}[section]
\newtheorem{definition}{Definition}[section]
\newtheorem{remark}{Remark}[section]
\newtheorem{example}{Example}[section]
\newcommand{\beq}{\begin{equation}}
\newcommand{\eeq}{\end{equation}}
\newcommand{\beqn}{\begin{eqnarray}}
\newcommand{\eeqn}{\end{eqnarray}}
\begin{document}

\title{Permanence and almost periodic solution of a multispecies Lotka-Volterra mutualism system with time varying  delays on time scales\thanks{This work is supported by the National
Natural Sciences Foundation of People's Republic of China under
Grant 11361072.}}
\author {Yongkun Li\thanks{%
The corresponding author. Email: yklie@ynu.edu.cn.} and Pan Wang\\
Department of Mathematics, Yunnan University\\
Kunming, Yunnan 650091\\
 People's Republic of China}
\date{}
\maketitle{}
\begin{abstract}
In this paper, we consider the almost periodic dynamics of a multispecies Lotka-Volterra mutualism system with time varying  delays on time scales.
By establishing some dynamic inequalities on time scales, a permanence result for the model is obtained. Furthermore,
by means of the almost periodic functional hull theory on time scales and Lyapunov functional, some criteria are obtained for the existence, uniqueness and global attractivity of almost periodic solutions of the model. Our results complement and extend some scientific work in recent years. Finally, an example is given to illustrate the main results.
\end{abstract}
{\bf Key words:} Permanence; Almost periodic solution; Mutualism system; Time delay; Time scale.\\
{\bf 2000 Mathematics Subject Classification:} 34K14; 34K20; 92D25; 34N05.

\section{Introduction}

\setcounter{equation}{0}
{\setlength\arraycolsep{2pt}}
 \indent

Recently, there are many scholars concerning with the dynamics of the mutualism model.
Topics such as permanence, global attractivity, and periodicity of mutualism systems governed by differential equations
were extensively investigated (see[1-10]). For example, in \cite{7}, the author studied the existence of positive periodic solutions of the periodic mutualism model:{\setlength\arraycolsep{2pt}\begin{eqnarray}\label{eli1}
\left\{
\begin{array}{lll}
{\displaystyle\frac{\mathrm{d}x_{1}(t)}{\mathrm{d}t}=x_{1}(t)\bigg[\frac{r_{1}(t)K_{1}(t)+r_{1}(t)\alpha_{1}(t)x_{2}(t-\tau_{2}(t))}
{1+x_{2}(t-\tau_{2}(t))}}-r_{1}(t)x_{1}(t-\sigma_{1}(t))\bigg],\\
{\displaystyle\frac{\mathrm{d}x_{2}(t)}{\mathrm{d}t}=x_{2}(t)\bigg[\frac{r_{2}(t)K_{2}(t)+r_{2}(t)\alpha_{2}(t)x_{1}(t-\tau_{1}(t))}
{1+x_{1}(t-\tau_{1}(t))}}-r_{2}(t)x_{2}(t-\sigma_{2}(t))\bigg],
\end{array}
\right.
\end{eqnarray}}where $r_{i}, K_{i}, \alpha_{i}\in C(\mathbb{R},\mathbb{R}^{+})$, $\alpha_{i} >K_{i}, i=1,2$, $\tau_{i}, \sigma_{i}\in C(\mathbb{R},\mathbb{R}^{+}),i=1,2$, $r_{i}, K_{i}, \alpha_{i}, \tau_{i}, \sigma_{i}(i=1,2)$ are functions of period $\omega>0$.

However, in applications, if the various
constituent components of the temporally nonuniform environment
is with incommensurable periods, then one has to consider the environment to be almost periodic since
there is no a priori reason to expect the existence
of periodic solutions. Hence, if we consider the
effects of the environmental factors, almost periodicity is sometimes
more realistic and more general than periodicity.
In recent years, the almost periodic solution of
the models in biological populations has been studied
extensively (see [11-18] and the references cited therein).
In addition, some recent attention was on the permanence and global stability of discrete mutualism system, and many excellent results have been derived (see [19-24]). In \cite{19}, the authors considered the following discrete multispecies Lotka-Volterra mutualism system:
{\setlength\arraycolsep{2pt}\begin{eqnarray}\label{e12}
x_{i}(k+1)=x_{i}(k)\exp\bigg\{a_{i}(k)-b_{i}(k)x_{i}(k)+\sum_{j=1,j\neq i}^{n}c_{ij}(k)\displaystyle\frac{x_{j}(k)}{d_{ij}+x_{j}(k)}\bigg\},\,\,i=1, 2,\ldots,
\end{eqnarray}}
where $x_{i}(k)$ stand for the densities of species $x_{i}$ at the $k$th generation, $a_{i}(k)$ represent the natural growth rates of species $x_{i}$ at the $k$th generation, $b_{i}(k)$ are the intraspecific effects of the $k$th generation of species $x_{i}$ on own population, $c_{ij}(k)$ measure the interspecific mutualism effects of the $k$th generation of species $x_{j}$ on species $x_{i}$ $(i, j=1,2,\ldots,n,i\neq j)$, and $d_{ij}(\geq 1)$ are positive control constants. By means of the theory of difference inequality and Lyapunov function, sufficient conditions are established for the existence and uniformly asymptotic stability of unique positive almost periodic solution to system $\eqref{e12}$.

Furthermore, so many processes, both natural and manmade, in biology, medicine, chemistry, physics, engineering, economics, etc. involve time delays. Time delays occur so often so if we ignore them, we ignore reality. Generally, the meaning of time delay is that some time elapses between causes and their effects (for instance, in population dynamics, individuals always need some time to mature, or in medicine, infectious diseases have incubation periods). Specially, in
the real world, the delays in differential equations of biological phenomena are usually time-varying. Thus, it is worthwhile continuing to study the existence and stability of a unique almost periodic solution of the multispecies Lotka-Volterra mutualism system with time varying delays.

Since permanence is one of the most important topics on the study of population dynamics, one of the most interesting questions in mathematical biology concerns the survival of species in ecological models. Biologically, when a system of interacting species is persistent in a suitable sense, it means that all the species survive in the long term. It is reasonable to ask for conditions under which the system is permanent.

Also, as we known, the study of dynamical systems on time scales is now an active area of research. The theory of times scales has received a lot of attention which was introduced by Stefan Hilger in his Ph.D. thesis in 1988, providing a rich theory that unifies and extends continuous and discrete analysis \cite{20}. In fact, both continuous and discrete systems are very important in implementation and applications. But it is troublesome to study the dynamics for continuous and discrete systems respectively. Therefore, it is significate to study that on time scales which can unify the continuous and discrete situations.

Motivated by the above reasons, in this paper, we are concerned with the following multispecies Lotka-Volterra mutualism system with time varying delays on time scales:
{\setlength\arraycolsep{2pt}\begin{eqnarray}\label{e13}
x_{i}^{\Delta}(t)=a_{i}(t)-b_{i}(t)e^{x_{i}(t-\tau_{i}(t))}+\sum_{j=1,j\neq i}^{n}c_{ij}(t)\frac{e^{x_{j}(t-\delta_{j}(t))}}{d_{ij}+e^{x_{j}(t-\delta_{j}(t))}},\,\,i=1,2,\ldots, n,\,\,t\geq t_{0},\,\,t, t_{0}\in{\mathbb{T}},\nonumber\\
\end{eqnarray}}where
$\mathbb{T}$ is an almost periodic time scale.

\begin{remark}\label{r1}
Let $y_{i}(t)=e^{x_{i}(t)}$, if $\mathbb{T}=\mathbb{R}$, then system (1.3) is reduced to the following system:
{\arraycolsep=2pt
\begin{equation}\label{ea1}
 \addtolength{\arraycolsep}{-3pt}
y_{i}(t)=y_{i}(t)\bigg\{a_{i}(t)-b_{i}(t)y_{i}(t-\tau_{i}(t))+\sum_{j=1,j\neq i}^{n}c_{ij}(t)\frac{y_{j}(t-\delta_{j}(t))}{d_{ij}+y_{j}(t-\delta_{j}(t))}\bigg\},\,\,i=1,2,\ldots, n,\,\,t\in\mathbb{R},
\end{equation}}
which is a generalization of \eqref{eli1}.
If $\mathbb{T}=\mathbb{Z}$,
then system (1.3) is reduced to the following system:
{\arraycolsep=2pt
\begin{eqnarray}\label{ea2}
y_{i}(k+1)&=&y_{i}(k)\exp\bigg\{a_{i}(k)-b_{i}(k)y_{i}(k-\tau_{i}(k))\nonumber\\
&&+\sum_{j=1,j\neq i}^{n}c_{ij}(k)\frac{y_{j}(k-\delta_{j}(k))}{d_{ij}+y_{j}(k-\delta_{j}(k))}\bigg\},\,\,i=1, 2,\ldots,\,\,t\in\mathbb{Z},
\end{eqnarray}}
let $\tau_{i}(k)=0$, $\delta_{j}(k)=0$, then system (1.3) is reduced to  system (1.2).
\end{remark}

By the biological meaning, we will focus our discussion on the positive solutions of system $\eqref{e13}$. So, it is assumed that the initial
condition of system $\eqref{e13}$ is the form
\begin{eqnarray}\label{e14}
x_{i}(s)=\varphi_{i}(s)\geq0, \quad\varphi_{i}(t_{0})>0,\quad s\in[t_{0}-\theta, t_{0}]_{\mathbb{T}},\quad i=1,2,\ldots,n,
\end{eqnarray}where $\theta=\max\{\tau^{+}, \delta^{+}\}$, $\tau^{+}=\max\limits_{1\leq i\leq n}\sup\limits_{t\in\mathbb{T}}\{\tau_{i}(t)\}$, $\tau^{-}=\min\limits_{1\leq i\leq n}\inf\limits_{t\in\mathbb{T}}\{\tau_{i}(t)\}$, $\delta^{+}=\max\limits_{1\leq j\leq n}\sup\limits_{t\in\mathbb{T}}\{\delta_{j}(t)\}$, $\delta^{-}=\min\limits_{1\leq j\leq n}\inf\limits_{t\in\mathbb{T}}\{\delta_{j}(t)\}$.

For convenience, we denote
\begin{eqnarray*}
 f^l=\inf_{t\in{\mathbb{T}}}|f(t)|,\quad f^u=\sup_{t\in{\mathbb{T}}}|f(t)|.
\end{eqnarray*}

Throughout this paper, we assume that
\begin{enumerate}
  \item [$(H_{1})$]
  $a_{i}(t), b_{i}(t), c_{ij}(t)$, $\tau_{i}(t)$, $\delta_{j}(t)$ are all  almost periodic functions such that $a_{i}^{l}>0$, $b_{i}^{l}>0$, $c_{ij}^{l}>0$, $\tau^{-}>0$ and $\delta^{-}>0$; $d_{ij}>1$,  $t-\tau_{i}(t)\in\mathbb{T}$ and $t-\delta_{j}(t)\in\mathbb{T}$ for $t\in{\mathbb{T}}$, $i,j=1,2,\ldots,n$, $j\neq i$.
  \item [$(H_{2})$]
    $\tau^{\Delta}=\max\limits_{1\leq i\leq n}\sup\limits_{t\in\mathbb{T}}\{\tau_{i}^{\Delta}(t)\}$, $\delta^{\Delta}=\max\limits_{1\leq j\leq n}\sup\limits_{t\in\mathbb{T}}\{\delta_{j}^{\Delta}(t)\}$ and $1-\tau^{\Delta}>0$, $1-\delta^{\Delta}>0$.
\end{enumerate}

To the best of our knowledge, there is no paper published on  the permanence, the  existence and uniqueness of globally attractive   almost periodic solutions to   systems \eqref{ea1} and \eqref{ea2}.
The main purpose of this paper is by establishing some dynamic inequalities on time scales to discuss the permanence of system \eqref{e13} and by using the almost periodic functional hull theory on time scales to establish criteria for the existence and uniqueness of globally attractive   almost periodic solutions of   system \eqref{e13}.

The paper is organized as follows. In Section 2, we introduce some basic definitions, necessary lemmas and establishing some dynamic inequalities on time scales which will
be used in later sections. In Section 3, we  discuss the permanence of system $\eqref{e13}$. In Section 4, we consider the global attractivity of almost periodic solutions of  system $\eqref{e13}$ by means of Lyapunov functional. In Section 5, some sufficient conditions are obtained for the existence of positive almost periodic solutions of system $\eqref{e13}$ by use of the almost periodic functional hull theory on time scales. The main result in Sections 4 and  5 are illustrated by giving an example in Section 6.

\section{Preliminaries}

\setcounter{equation}{0}
{\setlength\arraycolsep{2pt}}
 \indent

In this section, we shall recall some basic definitions, lemmas which are used in what follows.

A time scale $\mathbb{T}$ is an arbitrary nonempty closed subset of the real numbers, the forward and backward jump operators $\sigma$, $\rho:\mathbb{T}\rightarrow \mathbb{T}$ and the forward graininess $\mu:\mathbb{T}\rightarrow \mathbb{R}^{+}$ are defined, respectively, by
\[
\sigma(t):=\inf \{s\in\mathbb{T}:s> t\},\,\,\rho(t):=\sup\{s\in\mathbb{T}:s<t\}\,\,
\text{and}\,\,\mu(t)=\sigma(t)-t.
\]

A point $t\in\mathbb{T}$ is called left-dense if $t>\inf\mathbb{T}$
and $\rho(t)=t$, left-scattered if $\rho(t)<t$, right-dense if
$t<\sup\mathbb{T}$ and $\sigma(t)=t$, and right-scattered if
$\sigma(t)>t$. If $\mathbb{T}$ has a left-scattered maximum $m$,
then $\mathbb{T}^k=\mathbb{T}\setminus\{m\}$; otherwise
$\mathbb{T}^k=\mathbb{T}$. If $\mathbb{T}$ has a right-scattered
minimum $m$, then $\mathbb{T}_k=\mathbb{T}\setminus\{m\}$; otherwise
$\mathbb{T}_k=\mathbb{T}$.

A function $f:\mathbb{T}\rightarrow\mathbb{R}$ is right-dense
continuous provided it is continuous at right-dense point in
$\mathbb{T}$ and its left-side limits exist at left-dense points in
$\mathbb{T}$. If $f$ is continuous at each right-dense point and
each left-dense point, then $f$ is said to be continuous function on
$\mathbb{T}$.

For $y:\mathbb{T}\rightarrow\mathbb{R}$ and $t\in\mathbb{T}^k$, we
define the delta derivative of $y(t)$, $y^\Delta(t)$, to be the
number (if it exists) with the property that for a given
$\varepsilon>0$, there exists a neighborhood $U$ of $t$ such that
\[
|[y(\sigma(t))-y(s)]-y^\Delta(t)[\sigma(t)-s]|<\varepsilon|\sigma(t)-s|
\]
for all $s\in U$.

If $y$ is continuous, then $y$ is right-dense continuous, and if $y$
is delta differentiable at $t$, then $y$ is continuous at $t$.

Let $f$ be right-dense continuous, if $F^{\Delta}(t)=f(t)$, then we define the delta integral by
\begin{eqnarray*}
\int_{r}^{s}f(t)\Delta t=F(s)-F(r),\,\,\,\,\,\,r, s\in{\mathbb{T}}.
\end{eqnarray*}
\begin{lemma}\label{lem21}\cite{20}
Assume $f, g : \mathbb{T} \longrightarrow \mathbb{R}$ are delta differentiable at $t\in\mathbb{T}_{¦Ê}$, then
\begin{itemize}
    \item  [$(i)$]  $(f + g)^{\Delta}(t)=f^{\Delta}(t) + g^{\Delta}(t)$;
    \item  [$(ii)$] $(fg)^{\Delta}(t)=f^{\Delta}(t)g(t) + f^{\sigma}(t)g^{\Delta}(t) = f(t)g^{\Delta}(t) + f^{\Delta}(t)g^{\sigma}(t)$;
    \item  [$(iii)$] If $g(t)g^{\sigma}(t)\neq 0$, then ${\displaystyle\bigg(\frac{f}{g}\bigg)^{\Delta}= \frac{f^{\Delta}(t)g(t)-f(t)g^{\Delta}(t)}{g(t)g^{\sigma}(t)}}$;
    \item  [$(iv)$] If $f$ and $f^{\Delta}$ are continuous, then $(\int_{a}^{t}f(t, s)\Delta s)^{\Delta}=f(\sigma(t), t)+\int_{a}^{t}f^{\Delta}(t, s)\Delta s$.
\end{itemize}
\end{lemma}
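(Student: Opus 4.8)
This lemma collects the basic differentiation rules of the time scale calculus, and the plan is to derive each part directly from the definition of the delta derivative recalled above, in the order (i), (ii), (iii), treating (iv) last. For (i), I would fix $\varepsilon>0$, use delta differentiability of $f$ and $g$ at $t$ to get neighborhoods $U_1,U_2$ of $t$ on which the defining inequalities hold with tolerance $\tfrac\varepsilon2$, and then on $U=U_1\cap U_2$ add them and apply the triangle inequality; this exhibits $f^\Delta(t)+g^\Delta(t)$ as the delta derivative of $f+g$.

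For (ii), I would start from the algebraic identity
\[
f(\sigma(t))g(\sigma(t))-f(s)g(s)=[f(\sigma(t))-f(s)]g(s)+f(\sigma(t))[g(\sigma(t))-g(s)],
\]
replace $f(\sigma(t))-f(s)$ and $g(\sigma(t))-g(s)$ by $f^\Delta(t)[\sigma(t)-s]$ and $g^\Delta(t)[\sigma(t)-s]$ up to errors that are $o(|\sigma(t)-s|)$, and use that $g$, being delta differentiable at $t$, is continuous at $t$, so that $g(s)$ stays close to $g(t)$ on a small neighborhood. Collecting terms gives $(fg)^\Delta(t)=f^\Delta(t)g(t)+f^\sigma(t)g^\Delta(t)$, and the symmetric formula follows either by interchanging the roles of $f$ and $g$ or by substituting $f^\sigma=f+\mu f^\Delta$. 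For (iii) I would first prove the reciprocal rule $(1/g)^\Delta(t)=-g^\Delta(t)/(g(t)g^\sigma(t))$ from the definition, using $g(t)g^\sigma(t)\ne0$ and continuity of $g$ at $t$ to keep the denominator bounded away from $0$ on a neighborhood, and then write $f/g=f\cdot(1/g)$ and apply (ii):
\[
\Big(\tfrac fg\Big)^\Delta(t)=\frac{f^\Delta(t)}{g(t)}-\frac{f^\sigma(t)g^\Delta(t)}{g(t)g^\sigma(t)}=\frac{f^\Delta(t)g^\sigma(t)-f^\sigma(t)g^\Delta(t)}{g(t)g^\sigma(t)},
\]
after which the numerator simplifies to $f^\Delta(t)g(t)-f(t)g^\Delta(t)$ via $f^\sigma=f+\mu f^\Delta$ and $g^\sigma=g+\mu g^\Delta$.

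For (iv) I would use the fundamental theorem of calculus on time scales: writing $G(t)=\int_a^t f(t,s)\,\Delta s$ and splitting
\[
G(\sigma(t))-G(s)=\int_s^{\sigma(t)}f(\sigma(t),r)\,\Delta r+\int_a^{s}\big[f(\sigma(t),r)-f(s,r)\big]\,\Delta r,
\]
the first term equals $f(\sigma(t),t)[\sigma(t)-s]+o(|\sigma(t)-s|)$ by right-dense continuity of $r\mapsto f(\sigma(t),r)$, and the second equals $\big(\int_a^{s}f^\Delta(t,r)\,\Delta r\big)[\sigma(t)-s]+o(|\sigma(t)-s|)$, where the joint continuity of $f$ and $f^\Delta$ is used to make the delta-differentiability estimate in the first variable uniform in $r$ so that it can be integrated. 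Combining the two pieces yields the claimed formula. The only genuinely delicate point is this uniformity in part (iv); parts (i)--(iii) are routine $\varepsilon$-estimates, and indeed the whole statement is standard, so it may alternatively simply be attributed to \cite{20}.
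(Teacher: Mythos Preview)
Your sketch is sound and the $\varepsilon$-arguments for (i)--(iii) together with the splitting argument for (iv) are the standard way to establish these identities; the uniformity issue you flag in (iv) is indeed the only subtle point. However, the paper does not prove this lemma at all: it is stated with the citation \cite{20} and used as a known result from Bohner and Peterson's monograph. So your proposal goes well beyond the paper's treatment, which is simply to quote the result. Your own closing remark that the statement ``may alternatively simply be attributed to \cite{20}'' is in fact exactly what the paper does.
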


A function $p:\mathbb{T}\rightarrow\mathbb{R}$ is called regressive provided $1+\mu(t)p(t)\neq 0$ for all $t\in{\mathbb{T}^{k}}$. The
set of all regressive and rd-continuous functions $p:\mathbb{T}\rightarrow\mathbb{R}$ will be denoted by $\mathcal{R}=\mathcal{R}(\mathbb{T})=\mathcal{R}(\mathbb{T}, \mathbb{R})$. We define the set $\mathcal{R}^{+}=\mathcal{R}^{+}(\mathbb{T}, \mathbb{R})=\{p\in\mathcal{R}: 1+\mu(t)p(t)> 0, \forall t\in{\mathbb{T}}\}$.

If $r\in\mathcal{R}$, then the generalized exponential function $e_{r}$ is defined by
\begin{eqnarray*}
e_{r}(t, s)=\exp\bigg\{\int_s^t\xi_{\mu(\tau)}(r(\tau))\Delta\tau\bigg\},
\end{eqnarray*}
for all $s,t\in\mathbb{T}$, with the cylinder transformation
\begin{eqnarray*}
\xi_h(z)=\bigg\{\begin{array}{ll} {\displaystyle\frac{\mathrm{Log}(1+hz)}{h}},\,\,h\neq 0,\\
z,\,\,\,\,\,\,\,\quad\quad\quad\quad\quad h=0.\\
\end{array}
\end{eqnarray*}

Let $p,q:\mathbb{T}\rightarrow\mathbb{R}$ be two regressive functions, we define
\[
p\oplus q=p+q+\mu pq,\,\,\,\,\ominus p=-\frac{p}{1+\mu p},\,\,\,\,p\ominus q=p\oplus(\ominus q)=\frac{p-q}{1+\mu q}.
\]
Then the generalized exponential function has the following
properties.

\begin{lemma}\label{lem22}\cite{20}
Assume that $p,q:\mathbb{T}\rightarrow\mathbb{R}$ are two regressive
functions, then
\begin{itemize}
    \item  [$(i)$]  $e_{0}(t,s)\equiv 1$ and $e_p(t,t)\equiv 1$;
    \item  [$(ii)$] $e_p(\sigma(t),s)=(1+\mu(t)p(t))e_p(t,s)$;
    \item  [$(iii)$]$e_p(t,s)=1/e_p(s,t)=e_{\ominus p}(s,t)$;
    \item  [$(iv)$]  $e_p(t,s)e_p(s,r)=e_p(t,r)$;
    \item  [$(v)$] $e_p(t,s)e_q(t,s)=e_{p\oplus q}(t,s)$;
    \item  [$(vi)$] $e_p(t,s)/e_q(t,s)=e_{p\ominus q}(t,s)$;
    \item  [$(vi)$] $\big(\frac{1}{e_p(t,s)}\big)^{\Delta}=\frac{-p(t)}{e^{\sigma}_p(t,s)}$.
\end{itemize}
\end{lemma}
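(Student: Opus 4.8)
The plan is to obtain all seven identities directly from the integral representation $e_r(t,s)=\exp\{\int_s^t\xi_{\mu(\tau)}(r(\tau))\Delta\tau\}$, reducing everything to two elementary properties of the cylinder transformation. For regressive $p,q$ one checks in one line from the definitions of $\ominus$ and $\oplus$ that $1+\mu(t)(\ominus p(t))=(1+\mu(t)p(t))^{-1}$ and $1+\mu(t)(p(t)\oplus q(t))=(1+\mu(t)p(t))(1+\mu(t)q(t))$; applying $\mathrm{Log}$ and dividing by $\mu(t)$ (the case $\mu(t)=0$ being trivial, since then $\ominus p=-p$ and $p\oplus q=p+q$) yields
\[
\xi_{\mu(t)}(\ominus p(t))=-\xi_{\mu(t)}(p(t)),\qquad \xi_{\mu(t)}(p(t)\oplus q(t))=\xi_{\mu(t)}(p(t))+\xi_{\mu(t)}(q(t)).
\]
Together with $\xi_h(0)=0$, these are the only facts about $\xi$ that will be needed.

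First I would dispatch the algebraic identities. Since $\xi_h(0)=0$ for every $h$, the exponent defining $e_0(t,s)$ vanishes, giving $e_0\equiv1$, and $\int_t^t(\cdots)\Delta\tau=0$ gives $e_p(t,t)\equiv1$; this is (i). Additivity of the delta integral in its endpoint, $\int_r^s+\int_s^t=\int_r^t$, exponentiates to (iv); combined with $e_p(t,t)=1$ it gives $e_p(t,s)e_p(s,t)=1$, i.e. $e_p(t,s)=1/e_p(s,t)$, and the sign rule for $\xi$ rewrites $e_p(t,s)=\exp\{-\int_t^s\xi_{\mu(\tau)}(p(\tau))\Delta\tau\}=\exp\{\int_t^s\xi_{\mu(\tau)}(\ominus p(\tau))\Delta\tau\}=e_{\ominus p}(s,t)$, which is (iii). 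For (ii), at a right-scattered point $\int_t^{\sigma(t)}\xi_{\mu(\tau)}(p(\tau))\Delta\tau=\mu(t)\xi_{\mu(t)}(p(t))=\mathrm{Log}(1+\mu(t)p(t))$, so $e_p(\sigma(t),s)=e^{\mathrm{Log}(1+\mu(t)p(t))}e_p(t,s)=(1+\mu(t)p(t))e_p(t,s)$ (and the identity is trivial when $\mu(t)=0$). Splitting the exponent and using additivity of $\xi$ in its argument gives (v), and then (vi) follows by writing $e_p/e_q=e_p\,e_{\ominus q}$ via (iii) and applying (v) together with $p\ominus q=p\oplus(\ominus q)$.

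For the last identity, the one genuinely analytic point, I would first establish the dynamic equation $e_p^{\Delta}(t,s)=p(t)e_p(t,s)$ by applying the time-scale chain rule to $\exp\circ g$, where $g(t)=\int_s^t\xi_{\mu(\tau)}(p(\tau))\Delta\tau$ has $g^{\Delta}(t)=\xi_{\mu(t)}(p(t))$: the chain-rule integral $\int_0^1\exp(h\mu(t)g^{\Delta}(t))\,dh$ equals $1$ when $\mu(t)=0$ and equals $(e^{\mu(t)g^{\Delta}(t)}-1)/(\mu(t)g^{\Delta}(t))=\mu(t)p(t)/(\mu(t)g^{\Delta}(t))$ when $\mu(t)\neq0$, using $e^{\mu(t)\xi_{\mu(t)}(p(t))}=1+\mu(t)p(t)$; either way the product collapses to $p(t)e_p(t,s)$. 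Since regressivity forces $1+\mu(t)p(t)\neq0$, the function $e_p(\cdot,s)$ is nowhere zero, so Lemma~\ref{lem21}(iii) applies with $f\equiv1$ and $g=e_p(\cdot,s)$ and gives $\big(1/e_p(t,s)\big)^{\Delta}=-e_p^{\Delta}(t,s)/\big(e_p(t,s)e_p^{\sigma}(t,s)\big)=-p(t)/e_p^{\sigma}(t,s)$. The only obstacle here is bookkeeping — keeping the $\mu(t)=0$ and $\mu(t)\neq0$ branches of $\xi_h$ straight (and, if one allows general regressive $p$ rather than $p\in\mathcal{R}^{+}$, choosing the branch of $\mathrm{Log}$ consistently) — and there is no conceptual difficulty; indeed the whole statement is classical (see \cite{20}).
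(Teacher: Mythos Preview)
Your proof is correct, but note that the paper does not actually prove this lemma: it is stated with the citation \cite{20} (Bohner and Peterson) and no proof is given in the paper itself. Your argument---deriving everything from the integral representation of $e_r$ together with the two identities $\xi_{\mu}(\ominus p)=-\xi_{\mu}(p)$ and $\xi_{\mu}(p\oplus q)=\xi_{\mu}(p)+\xi_{\mu}(q)$, then using the time-scale chain rule to get $e_p^{\Delta}=p\,e_p$ and the quotient rule of Lemma~\ref{lem21}(iii) for the last item---is exactly the standard development found in that reference, so there is nothing to compare.
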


\begin{lemma}\label{lem24}\cite{zhang1} Let $f:\mathbb{T}\rightarrow \mathbb{R}$ be a continuously increasing function and $f(t)>0$ for $t\in\mathbb{T}$, then
\begin{eqnarray*}
\frac{f^{\Delta}(t)}{f^{\sigma}(t)}\leq[\ln (f(t))]^{\Delta}\leq\frac{f^{\Delta}(t)}{f(t)}.
\end{eqnarray*}
\end{lemma}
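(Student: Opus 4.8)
The plan is to reduce the inequality to two elementary facts: (i) at points where the graininess vanishes all three quantities coincide, and (ii) at isolated points the claim is nothing but the classical scalar estimate $1-\tfrac1u\le\ln u\le u-1$. First I would record the basic positivity facts that make the statement meaningful: since $f$ is increasing we have $f^{\Delta}(t)\ge 0$ for every $t$, and since $f^{\sigma}(t)=f(t)+\mu(t)f^{\Delta}(t)$ with $\mu(t)\ge 0$ we get $0<f(t)\le f^{\sigma}(t)$; thus all denominators appearing are strictly positive and $f^{\Delta}(t)/f^{\sigma}(t)\le f^{\Delta}(t)/f(t)$ is already clear, the content being the sandwiching of $[\ln f]^{\Delta}$ between them.

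For a right-dense point $t$ one has $\mu(t)=0$, hence $f^{\sigma}(t)=f(t)$, and the asserted chain of inequalities collapses to the single identity $[\ln f(t)]^{\Delta}=f^{\Delta}(t)/f(t)$, which is the ordinary chain rule: using the mean value theorem for $\ln$ on the interval between $f(s)$ and $f(t)$, writing $\ln f(s)-\ln f(t)=(f(s)-f(t))/\zeta_s$ with $\zeta_s$ between the two values, and letting $s\to t$ while invoking continuity of $f$ and existence of $f^{\Delta}(t)$. For a right-scattered point $t$ we have $\mu(t)>0$ and, straight from the definition of the delta derivative,
\[
[\ln f(t)]^{\Delta}=\frac{1}{\mu(t)}\,\ln\frac{f^{\sigma}(t)}{f(t)},\qquad f^{\Delta}(t)=\frac{f^{\sigma}(t)-f(t)}{\mu(t)} .
\]
Setting $u=f^{\sigma}(t)/f(t)\ge 1$ and multiplying the desired double inequality through by $\mu(t)>0$, it becomes exactly $1-\tfrac1u\le \ln u\le u-1$; the right inequality is the standard bound $\ln u\le u-1$ (valid for all $u>0$), and the left one is obtained by applying that same bound to $1/u$, namely $\ln u=-\ln(1/u)\ge-(\tfrac1u-1)=1-\tfrac1u$. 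Combining the two cases yields the claim for every $t\in\mathbb{T}$.

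Alternatively, I would note that one can avoid the case split altogether by invoking the Pötzsche chain rule with outer function $\ln$ (which is continuously differentiable on a neighborhood of the positive values of $f$): this gives $[\ln f(t)]^{\Delta}=f^{\Delta}(t)\int_0^1\frac{\mathrm{d}h}{f(t)+h\mu(t)f^{\Delta}(t)}$, after which the estimate $f(t)\le f(t)+h\mu(t)f^{\Delta}(t)\le f^{\sigma}(t)$ for $h\in[0,1]$, together with $f^{\Delta}(t)\ge 0$, bounds the right-hand side below by $f^{\Delta}(t)/f^{\sigma}(t)$ and above by $f^{\Delta}(t)/f(t)$. Either way, there is essentially no obstacle here; the only point requiring a little care is the right-dense case, where one must be sure that $[\ln f]^{\Delta}(t)$ really equals the genuine composite derivative $f^{\Delta}(t)/f(t)$ rather than something merely equal to it in the limit, and this is settled by the mean value theorem argument (or by citing the time-scale chain rule directly).
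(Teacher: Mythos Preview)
Your proof is correct. Note, however, that the paper does not actually prove this lemma: it is stated with a citation to \cite{zhang1} and used without argument. So there is no ``paper's own proof'' to compare against; your write-up supplies what the paper omits. Both of your routes are sound---the right-dense/right-scattered case split reducing to $1-\tfrac{1}{u}\le\ln u\le u-1$, and the P\"otzsche chain rule giving $[\ln f]^{\Delta}(t)=f^{\Delta}(t)\int_0^1\!\big(f(t)+h\mu(t)f^{\Delta}(t)\big)^{-1}\,\mathrm{d}h$ followed by the obvious monotone bound on the integrand---and either one alone suffices.
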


\begin{definition}\label{def21}\cite{24}  A time scale $\mathbb{T}$ is called an almost periodic time scale if
\begin{eqnarray*}
\Pi=\big\{\tau\in\mathbb{R}: t\pm\tau\in\mathbb{T}, \forall t\in{\mathbb{T}}\big\}\neq\{0\}.
\end{eqnarray*}
\end{definition}

Throughout this paper, $\mathbb{E}^{n}$ denotes $\mathbb{R}^{n}$ or $\mathbb{C}^{n}$, $D$ denotes an open set in $\mathbb{E}^{n}$ or $D=\mathbb{E}^{n}$,
and $S$ denotes an arbitrary compact subset of $D$.

\begin{definition}\label{def22}\cite{24}   Let $\mathbb{T}$ be an almost periodic time scale. A function $f\in C(\mathbb{T}\times D, \mathbb{E}^{n})$ is called an
almost periodic function in $t\in\mathbb{T}$ uniformly for $x\in D$ if the $\epsilon$-translation set of $f$
\begin{eqnarray*}
E\{\epsilon, f, S\}=\{t\in\Pi: |f(t+\tau, x)-f(t, x)|<\epsilon, \forall (t, x)\in\mathbb{T}\times S\}
\end{eqnarray*}
is a relatively dense set in $\mathbb{T}$ for all $\epsilon>0$ and for each compact subset $S$ of $D$; that is, for any given $\epsilon>0$ and for each compact subset $S$ of $D$, there exists a constant
$l(\epsilon, S)>0$ such that each interval of length $l(\epsilon, S)$ contains a $\tau(\epsilon, S)\in E\{\epsilon, f, S\}$ such that
\begin{eqnarray*}
 |f(t+\tau, x)-f(t, x)|<\epsilon, \forall(t, x)\in\mathbb{T}\times S.
\end{eqnarray*}
$\tau$ is called the $\epsilon$-translation number of $f$ and $l(\epsilon, S)$ is called the inclusion length of $E\{\epsilon, f, S\}$.
\end{definition}

For convenience, we denote $AP(\mathbb{T})=\{f: f\in C(\mathbb{T}, \mathbb{E}^{n})$, $f$ is almost periodic$\}$ and introduce some notations: let $\alpha=\{\alpha_{n}\}$ and $\beta=\{\beta_{n}\}$ be two sequences. Then, $\beta\subset\alpha$ means that $\beta$ is a subsequence of $\alpha$, $\alpha+\beta=\{\alpha_{n}+\beta_{n}\}$, $-\alpha=\{-\alpha_{n}\}$. $\alpha$ and $\beta$ are common subsequences of $\alpha'$ and $\beta'$, respectively, which means that $\alpha_{n}=\alpha'_{n(k)}$ and $\beta_{n}=\beta'_{n(k)}$ for some given function $n(k)$.

We will introduce the translation operator $T$, $T_{\alpha}f(t, x)=g(t, x)$, which means that $g(t, x)=\lim_{n\rightarrow+\infty}f(t+\alpha_{n}, x)$ and is written only when the limit exists. The mode of convergence, for example, pointwise, uniform, and so forth, will be specified at each use of the symbol.

\begin{definition}\label{def23}\cite{24}
Let $f\in C(\mathbb{T}\times D, \mathbb{E}^{n})$, $H(f)=\{g: \mathbb{T}\times D\rightarrow\mathbb{E}^{n}|$ there exits $\alpha\in\Pi$ such that
$T_{\alpha}f(t, x)=g(t, x)$ exists uniformly on $\mathbb{T}\times S\}$ is called the hull of $f$.
\end{definition}

\begin{lemma}\label{lem26}\cite{24}
If $f(t, x)$ is almost periodic  in $t\in\mathbb{T}$ uniformly for $x\in D$, then, for any $g(t, x)\in H(f)$, $g(t, x)$ is almost periodic  in $t\in\mathbb{T}$ uniformly for $x\in D$.
\end{lemma}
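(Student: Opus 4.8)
The plan is to prove the statement directly from Definition \ref{def22} and Definition \ref{def23}, exploiting the uniform convergence built into the hull together with the relative density of the $\epsilon$-translation set of $f$; no feature of the mutualism system \eqref{e13} is needed. First I would fix an arbitrary $g\in H(f)$ and, by Definition \ref{def23}, choose a sequence $\alpha=\{\alpha_{n}\}\subset\Pi$ with $T_{\alpha}f(t,x)=g(t,x)$ uniformly on $\mathbb{T}\times S$ for every compact $S\subset D$. Since each map $(t,x)\mapsto f(t+\alpha_{n},x)$ is continuous on $\mathbb{T}\times D$ (admissible because $\alpha_{n}\in\Pi$) and the convergence is uniform on compact subsets, the limit $g$ lies in $C(\mathbb{T}\times D,\mathbb{E}^{n})$; so it only remains to verify the translation condition of Definition \ref{def22}.

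Next, fix $\epsilon>0$ and a compact set $S\subset D$. Since $f$ is almost periodic, $E\{\epsilon/3,f,S\}$ is relatively dense in $\mathbb{T}$ with some inclusion length $l=l(\epsilon/3,S)$. I claim $E\{\epsilon/3,f,S\}\subseteq E\{\epsilon,g,S\}$, which immediately forces $E\{\epsilon,g,S\}$ to be relatively dense in $\mathbb{T}$ with the same inclusion length, and also shows $E\{\epsilon,g,S\}\subset\Pi$. To prove the claim, take any $\tau\in E\{\epsilon/3,f,S\}$; then $\tau\in\Pi$, so $t+\tau\in\mathbb{T}$ and $t+\alpha_{n}+\tau\in\mathbb{T}$ for all $t\in\mathbb{T}$. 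By the uniform convergence, there is $N$ such that for all $n\geq N$, all $t\in\mathbb{T}$ and all $x\in S$,
$$|f(t+\alpha_{n},x)-g(t,x)|<\frac{\epsilon}{3},\qquad |f(t+\tau+\alpha_{n},x)-g(t+\tau,x)|<\frac{\epsilon}{3}.$$
Fixing such an $n$ and applying the $\epsilon/3$-translation property of $f$ at the point $t+\alpha_{n}\in\mathbb{T}$, that is $|f((t+\alpha_{n})+\tau,x)-f(t+\alpha_{n},x)|<\epsilon/3$, the triangle inequality gives
$$|g(t+\tau,x)-g(t,x)|\leq|g(t+\tau,x)-f(t+\tau+\alpha_{n},x)|+|f(t+\alpha_{n}+\tau,x)-f(t+\alpha_{n},x)|+|f(t+\alpha_{n},x)-g(t,x)|<\epsilon$$
for every $t\in\mathbb{T}$ and $x\in S$. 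Hence $\tau\in E\{\epsilon,g,S\}$, which proves the claim, and combined with the continuity of $g$ establishes that $g$ is almost periodic in $t\in\mathbb{T}$ uniformly for $x\in D$.

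The only point requiring care — which I would flag as the main (albeit mild) obstacle — is keeping all shifts admissible on the time scale and ensuring the resulting translation numbers again belong to $\Pi$: this is what makes the expressions $f(t+\alpha_{n},\cdot)$, $f(t+\tau,\cdot)$, $f(t+\alpha_{n}+\tau,\cdot)$ and $g(t+\tau,\cdot)$ meaningful, and it is exactly why Definition \ref{def22} requires the $\epsilon$-translation set to sit inside $\Pi$ in the first place. Everything else is the standard three-$\epsilon$ argument, with the uniformity of the hull convergence doing the work of making the index $N$ independent of $(t,x)$.
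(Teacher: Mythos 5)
Your proof is correct: the three-$\epsilon$ argument, combined with the observation that every $\tau\in E\{\epsilon/3,f,S\}$ already lies in $\Pi$ (so all shifts $t+\alpha_n$, $t+\tau$, $t+\alpha_n+\tau$ stay in $\mathbb{T}$) and that the hull convergence in Definition \ref{def23} is uniform on $\mathbb{T}\times S$, does establish $E\{\epsilon/3,f,S\}\subseteq E\{\epsilon,g,S\}$ and hence the relative density required by Definition \ref{def22}. Note, however, that the paper itself gives no proof of this lemma — it is quoted from reference \cite{24} — so there is no in-paper argument to compare against; your proof is essentially the standard one from that reference, and it is sound as written.
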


\begin{lemma}\label{lem27}\cite{24}
If $f(t, x)$ is almost periodic  in $t\in\mathbb{T}$ uniformly for $x\in D$, denote $F(t, x)=\int_{0}^{t}f(s, x)\Delta s$, then $F(t, x)$ is almost periodic in $t\in\mathbb{T}$ uniformly for $x\in D$ if and only if $F(t, x)$ is bounded on $\mathbb{T}\times S$.	
\end{lemma}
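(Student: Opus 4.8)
The plan is to prove the two implications separately, with the "only if" direction being immediate and the "if" direction forming the substance of the argument. For the necessity part, if $F(t,x)=\int_0^t f(s,x)\,\Delta s$ is almost periodic in $t$ uniformly for $x\in D$, then by the very definition of almost periodicity (Definition 2.2 applied to $F$ with $g=F$, or simply by taking $\epsilon$-translation numbers and using the relative density together with continuity on compacta) $F$ is bounded on $\mathbb{T}\times S$ for every compact $S\subset D$; indeed an almost periodic function is automatically bounded on $\mathbb{T}\times S$, since one covers $\mathbb{T}$ by translates of a fixed inclusion-length interval and uses the uniform bound of $F$ on that interval by continuity plus the $\epsilon$-estimate. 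So this direction needs only a sentence.

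For the sufficiency part, assume $F$ is bounded on $\mathbb{T}\times S$ for each compact $S$, and fix such an $S$ and an $\epsilon>0$. The first step is to use the almost periodicity of $f$: there is an inclusion length $l=l(\epsilon',S)$ such that every interval of length $l$ in $\mathbb{T}$ contains a common $\epsilon'$-translation number $\tau\in\Pi$ for $f$ (with $\epsilon'$ to be chosen small relative to $\epsilon$ and the diameter constant). The second step is the standard Bochner-type trick: for such a $\tau$ and any $t,x$, write
\begin{eqnarray*}
F(t+\tau,x)-F(t,x)=\int_0^\tau f(s,x)\,\Delta s+\int_0^{t}\big[f(s+\tau,x)-f(s,x)\big]\,\Delta s,
\end{eqnarray*}
so the "variable" part of the increment is controlled by $\epsilon'$ times the length of the interval of integration, while the constant part $\int_0^\tau f(s,x)\,\Delta s = F(\tau,x)$ is merely bounded, not small. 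The third step removes this defect using boundedness of $F$: one shows that the set of values $\{F(\tau_k,x): k\}$ along a suitable sequence of translation numbers $\tau_k\to\infty$ (or $\to-\infty$) clusters, so that one can find two translation numbers $\tau_p,\tau_q$ with $\tau_p-\tau_q$ again a translation number of $f$ and with $|F(\tau_p,x)-F(\tau_q,x)|<\epsilon$ uniformly on $S$; then $\tau:=\tau_p-\tau_q$ (translating the identity above and combining) is an $\epsilon$-translation number of $F$, and the relative density of such differences in $\mathbb{T}$ follows from the relative density of the $\tau_k$.

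The main obstacle is precisely this third step: extracting, from mere boundedness of $F$ on $\mathbb{T}\times S$, a relatively dense set of $\tau$ for which the constant term $F(\tau,x)$ is uniformly small on $S$. On $\mathbb{R}$ this is the classical argument behind "an almost periodic function with bounded antiderivative has almost periodic antiderivative," and the proof carries over to an almost periodic time scale because $\Pi$ is a subgroup of $\mathbb{R}$ (so differences of translation numbers stay admissible) and the $\Delta$-integral is additive over abutting intervals exactly as the Lebesgue integral is; one partitions a large interval of translation numbers, uses a pigeonhole/compactness argument on the bounded net $\{F(\cdot,\cdot)\}$ of functions on the compact $S$ (e.g. cover the bounded range by finitely many $\epsilon/2$-balls), and thereby produces the desired differences with the required spacing. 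I would cite the continuous-time version for the structural skeleton and verify that each ingredient---additivity of the integral, group structure of $\Pi$, uniform continuity/boundedness on compacta---transfers verbatim to the time-scale setting, which is routine given Lemmas 2.1 and the definitions already recorded. \qed
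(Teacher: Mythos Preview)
The paper does not prove this lemma at all; it is quoted from \cite{24} without argument, so there is no in-paper proof to compare your plan against.

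On the substance: your ``only if'' direction is fine and needs only a line. Your ``if'' direction, however, has a real gap. In the decomposition
\[
F(t+\tau,x)-F(t,x)=F(\tau,x)+\int_0^{t}\big[f(s+\tau,x)-f(s,x)\big]\,\Delta s,
\]
you describe the integral as ``controlled by $\epsilon'$ times the length of the interval of integration'' and then devote step~3 entirely to making the constant term $F(\tau,x)$ small. But the interval of integration is $[0,t]$ with $t$ ranging over all of $\mathbb{T}$, so this bound is $\epsilon'|t|$, which is \emph{not} uniform in $t$; the integral term is simply not under control. Your pigeonhole on $\{F(\tau_k,x)\}$ does not repair this: taking $\tau=\tau_p-\tau_q$ and ``translating the identity and combining'' yields
\[
F(u+\tau,x)-F(u,x)=\big[F(\tau_p,x)-F(\tau_q,x)\big]+\int_{\tau_q}^{u}\big[f(v+\tau,x)-f(v,x)\big]\,\Delta v,
\]
and the last integral is again only $O(\epsilon'|u|)$. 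Making $|F(\tau_p,x)-F(\tau_q,x)|$ small therefore does not force $\sup_t|F(t+\tau,x)-F(t,x)|$ to be small. The classical arguments (Bohr's original, or Bochner's normality criterion) do not go through a pigeonhole on the values $F(\tau_k,x)$; one standard route observes that $\varphi_\tau(t):=F(t+\tau,x)-F(t,x)=\int_0^{\tau} f(t+s,x)\,\Delta s$ is itself almost periodic in $t$ for each fixed $\tau$, and then uses boundedness of $F$ (values near $\sup F$ and $\inf F$) together with $|\varphi_\tau^{\Delta}|\le\epsilon'$ to pin both $\sup_t\varphi_\tau$ and $\inf_t\varphi_\tau$ near zero whenever $\tau$ is a sufficiently fine translation number of $f$. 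Your step~3 would need to be replaced by an argument of this kind.
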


\begin{lemma}\label{lem28}\cite{24}
A function $f(t, x)$ is almost periodic in $t\in\mathbb{T}$ uniformly for $x\in D$  if and only if from every pair of sequences $\alpha^{'}\subset\Pi$, $\beta^{'}\subset\Pi$ one can extract common subsequences $\alpha\subset\alpha^{'}$, $\beta\subset\beta^{'}$ such that
 \begin{eqnarray*}
 T_{\alpha+\beta}f(t, x)=T_{\alpha}T_{\beta}f(t, x).
  \end{eqnarray*}
\end{lemma}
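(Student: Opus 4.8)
The plan is to establish this as the Bochner double-sequence criterion transported to the time-scale setting, following the classical real-line argument step by step, and keeping throughout the convention of Definition \ref{def23} that ``$T_\alpha f(t,x)=g(t,x)$'' abbreviates uniform convergence of $f(t+\alpha_n,x)$ to $g(t,x)$ on $\mathbb{T}\times S$ for every compact $S\subset D$.

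\textit{Necessity.} Assuming $f$ almost periodic, I would first prove \emph{normality}: every sequence $\{\gamma_n\}\subset\Pi$ has a subsequence along which $f(\cdot+\gamma_n,\cdot)$ converges uniformly on each $\mathbb{T}\times S$. This comes from the relative density of the $\varepsilon$-translation sets $E\{\varepsilon,f,S\}$ (Definition \ref{def22}) together with the uniform continuity of $f$ on $\mathbb{T}\times S$ (itself a consequence of almost periodicity plus continuity), via a Cantor diagonal extraction over a countable family of $\varepsilon$-translation numbers with $\varepsilon=1/k$, all chosen in $\Pi$. Given $\alpha'\subset\Pi$, $\beta'\subset\Pi$, I would then apply normality to $\beta'$ to get $\beta^{(1)}\subset\beta'$ with $T_{\beta^{(1)}}f=:h$ uniformly; by Lemma \ref{lem26}, $h\in H(f)$ is again almost periodic, hence normal, so one can extract $\alpha\subset\alpha'$ — refining $\beta^{(1)}$ to a common subsequence $\beta$ at each stage — so that $T_\alpha h$, $T_\alpha f$ and $T_{\alpha+\beta}f$ all exist uniformly. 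Finally, writing $g:=T_\alpha h$, the identity follows from
\[
|f(t+\alpha_n+\beta_n,x)-g(t,x)|\le|f(t+\alpha_n+\beta_n,x)-h(t+\alpha_n,x)|+|h(t+\alpha_n,x)-g(t,x)|,
\]
since the second term $\to0$ uniformly by definition of $g=T_\alpha h$ and the first $\to0$ uniformly because $f(s+\beta_n,x)\to h(s,x)$ uniformly in $(s,x)$ (set $s=t+\alpha_n$); hence $T_{\alpha+\beta}f=T_\alpha T_\beta f$.

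\textit{Sufficiency.} For the converse I would argue by contradiction. First, taking $\beta'$ to be the zero sequence (legitimate since $0\in\Pi$), the hypothesis immediately yields that $f$ is normal. If $f$ were not almost periodic, there would be $\varepsilon_0>0$ and a compact $S$ for which $E\{\varepsilon_0,f,S\}$ is not relatively dense in $\mathbb{T}$; then I would build inductively $\{t_n\}\subset\Pi$, choosing each $t_{n+1}$ inside an $\varepsilon_0$-translation-free interval long enough that $t_{n+1}-t_j\notin E\{\varepsilon_0,f,S\}$ for all $j\le n$. This forces $\sup_{\mathbb{T}\times S}|f(t+(t_n-t_m),x)-f(t,x)|\ge\varepsilon_0$ and hence, replacing $t$ by $t+t_m$, $\sup_{\mathbb{T}\times S}|f(t+t_n,x)-f(t+t_m,x)|\ge\varepsilon_0$ for all $n\ne m$, so $\{f(\cdot+t_n,\cdot)\}$ admits no uniformly convergent subsequence — contradicting normality. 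Thus $f$ is almost periodic. (Boundedness of $f$ on $\mathbb{T}\times S$ needed along the way follows from normality, and continuity from the standing hypothesis $f\in C(\mathbb{T}\times D,\mathbb{E}^n)$.)

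\textit{Main obstacle.} I expect the genuinely delicate part to be the normality step on a general time scale: one must make sure the translation numbers used in the diagonal extraction stay in $\Pi$ and that the scattered points of $\mathbb{T}$ do not spoil the uniform continuity estimates; similarly, in the sufficiency argument, the inductive construction requires that the arbitrarily long translation-free intervals, together with the points picked inside them and the resulting differences, all remain in $\mathbb{T}$ and in $\Pi$. Once this time-scale bookkeeping is in place, the remainder is a faithful transcription of the classical proof.
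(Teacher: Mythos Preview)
The paper does not supply a proof of this lemma; it is quoted verbatim from reference \cite{24} and used as a black box (notably in the proof of Lemma \ref{lem211}). So there is no in-paper argument to compare your proposal against.

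That said, your sketch is the standard Bochner double-sequence proof, and it is essentially what one finds in \cite{24} (and ultimately in Fink's classical treatment on $\mathbb{R}$). The necessity half is sound: normality from almost periodicity, Lemma \ref{lem26} to keep the first limit $h=T_\beta f$ almost periodic, then a triangle inequality with the uniform-in-$s$ convergence $f(s+\beta_n,x)\to h(s,x)$ evaluated at $s=t+\alpha_n$. The sufficiency half is also the right idea, and your trick of taking $\beta'$ to be the zero sequence to extract normality from the double-sequence hypothesis is exactly how it is done. You have correctly flagged the only genuinely time-scale-specific issues: in the diagonal extraction one must keep all translation numbers inside $\Pi$, and in the contradiction step one must be able to place the inductively chosen $t_n$ (and their pairwise differences) in $\Pi$, which uses that $\Pi$ is a subgroup of $\mathbb{R}$ closed under the relevant operations. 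Once that bookkeeping is done, nothing further is needed.
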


\begin{lemma}\label{lem29}\cite{24}
A function $f(t)$ is almost periodic   if and only if for any sequence $\{\alpha_{n}^{'}\}\subset\Pi$ there exists a subsequence $\{\alpha_{n}\}\subset\{\alpha_{n}^{'}\}$ such that $f(t+\alpha_{n})$ converges uniformly on $t\in\mathbb{T}$ as
$n\rightarrow\infty$. Furthermore, the limit function is also   almost periodic.
\end{lemma}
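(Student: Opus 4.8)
The statement is the Bochner-type sequential characterization of Bohr almost periodicity transplanted to time scales, so my plan is to run the classical Bochner scheme while keeping track of the two time-scale-specific issues: every translation amount used must be drawn from $\Pi$ (not merely from $\mathbb{R}$), and the ``Arzel\`a--Ascoli'' step must be invoked in the form that a closed bounded subset of $\mathbb{T}$ is compact.

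\textbf{Necessity.} Assuming $f$ almost periodic, I would first record that $f$ is bounded and uniformly continuous on $\mathbb{T}$; both follow from Definition \ref{def22} by the standard $\varepsilon/3$ argument, using relative density of $E\{\varepsilon, f\}$ together with continuity of $f$ on a single inclusion-length window $[c, c+l]_{\mathbb{T}}$, which is closed and bounded, hence compact. Given $\{\alpha_{n}'\}\subset\Pi$, I would fix a countable set $\{r_{k}\}$ dense in $\mathbb{T}$, use boundedness of $\{f(r_{k}+\alpha_{n}')\}_{n}$ and a Cantor diagonal extraction to pass to $\{\alpha_{n}\}$ along which $f(r_{k}+\alpha_{n})$ converges for every $k$, and then upgrade this to uniform convergence on all of $\mathbb{T}$. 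The upgrade is where almost periodicity is used a second time: given $\varepsilon$, choose $\delta$ from uniform continuity and $l=l(\varepsilon/3)$ from relative density of $E\{\varepsilon/3, f\}$; for an arbitrary $t\in\mathbb{T}$ pick $\tau\in E\{\varepsilon/3, f\}$ folding $t$ into a fixed window, so that $|f(t+\alpha_{n})-f(t+\alpha_{m})|$ is controlled by finitely many $\delta$-balls about the $r_{k}$ inside that window, on which the diagonal subsequence is Cauchy. Completeness of $\mathbb{R}$ then yields a uniform limit, automatically continuous on $\mathbb{T}$.

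\textbf{Sufficiency and almost periodicity of the limit.} For the converse I would argue by contradiction: if $f$ is not almost periodic, then either (a) $f$ is unbounded or not uniformly continuous, or (b) $E\{\varepsilon_{0}, f\}$ fails to be relatively dense for some $\varepsilon_{0}>0$. In case (a) I would produce a sequence $\{s_{n}\}\subset\Pi$ witnessing the defect and note that any uniformly convergent subsequence $f(\cdot+s_{n})\to g$ would force $g$ to be unbounded or discontinuous, impossible for a uniform limit of continuous bounded functions. In case (b) I would take intervals $(a_{n},b_{n})$ with $b_{n}-a_{n}\to\infty$ containing no $\varepsilon_{0}$-translation number, select $s_{n}\in\Pi$ inside them so that suitable differences $s_{n}-s_{m}$ land in such a forbidden interval, and observe that a uniformly Cauchy subsequence of $f(\cdot+s_{n})$ gives $\sup_{t}|f(t+s_{n}-s_{m})-f(t)|<\varepsilon_{0}$ for large $n,m$, i.e. $s_{n}-s_{m}\in E\{\varepsilon_{0},f\}$, a contradiction. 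Finally, that $g=\lim_{n}f(\cdot+\alpha_{n})$ is almost periodic I would obtain from uniform-limit stability of the class: for large $n$ the translate $f(\cdot+\alpha_{n})$ is almost periodic and within $\varepsilon/3$ of $g$ in sup-norm, whence $E\{\varepsilon/3, f(\cdot+\alpha_{n})\}\subseteq E\{\varepsilon, g\}$ by a triangle estimate, so $E\{\varepsilon,g\}$ is relatively dense for every $\varepsilon$; alternatively one can verify the normality condition for $g$ directly by a diagonal argument and invoke Lemma \ref{lem28} together with the sufficiency just proved.

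\textbf{Main obstacle.} I expect the genuinely delicate point to be the necessity upgrade --- passing from convergence on a countable dense subset to uniform convergence on the \emph{unbounded} set $\mathbb{T}$ --- together with the constraint that every translate must be taken from $\Pi$ rather than from all of $\mathbb{R}$. Both are handled by the same device: exploiting relative density of the translation sets to fold an arbitrary point of $\mathbb{T}$ back into one fixed inclusion-length window, which, being closed and bounded, is compact, so a finite subcover of $\delta$-balls is available.
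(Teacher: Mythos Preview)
The paper does not prove this lemma at all: it is stated with a citation to \cite{24} and used as a black box, so there is no ``paper's own proof'' to compare against. Your sketch is the standard Bochner--Fink argument (bounded $+$ uniformly continuous $\Rightarrow$ diagonal extraction on a countable dense set, then use relative density of $E\{\varepsilon,f\}$ to fold into a compact window and upgrade to uniform convergence; contrapositive for sufficiency), correctly flagged for the time-scale constraints that all shifts must lie in $\Pi$ and that compactness is available on closed bounded subsets of $\mathbb{T}$. This is exactly the route taken in the cited source, so your proposal is appropriate, but within the present paper there is simply nothing to compare it to.
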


Consider the following equation
\begin{eqnarray}\label{e21}
x^{\Delta}(t)=f(t,x),\,\,t\in\mathbb{T}
\end{eqnarray}
and the corresponding hull equation
\begin{eqnarray}\label{e22}
x^{\Delta}(t)=g(t,x),\,\,t\in\mathbb{T},
\end{eqnarray}
where $f:\mathbb{T} \times S\rightarrow \mathbb{E}^{n}$, $f(t, x)$ is almost periodic in $t$ uniformly for $x\in S$, $g(t,x)\in H(f)$.

Similar to the proof of Theorem 3.2 in \cite{25}, one can easily get the following.
\begin{lemma}\label{lem210}    Let
$f(t, x)\in C(\mathbb{T}\times S, \mathbb{E}^{n})$ be an almost periodic in $t$ uniformly for $x\in S$. For every $g(t,x)\in H(f)$, the hull equation $\eqref{e22}$ has a unique solution, then these solutions are almost periodic.
\end{lemma}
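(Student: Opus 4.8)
The plan is to exploit the almost periodic functional hull theory developed in the cited work \cite{24,25} together with a standard compactness and diagonalization argument. The statement asserts that \emph{if} every hull equation $\eqref{e22}$ (for all $g\in H(f)$) possesses a \emph{unique} solution, \emph{then} each such solution is automatically almost periodic. Observe first that since $f$ is almost periodic in $t$ uniformly for $x\in S$, Lemma \ref{lem26} guarantees that every $g\in H(f)$ is again almost periodic in $t$ uniformly for $x\in S$, so the hull equations are all of the same qualitative type, and moreover $H(g)\subseteq H(f)$. Fix $g\in H(f)$ and let $u(t)$ be its unique (bounded, since $S$ is compact) solution on $\mathbb{T}$.

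The core of the argument is to show that the translation family $\{u(t+\alpha_n)\}$ is precompact in the topology of uniform convergence on $\mathbb{T}$ and that every limit is again a solution of a hull equation. First I would take an arbitrary sequence $\{\alpha_n'\}\subset\Pi$. By Lemma \ref{lem29} applied to the almost periodic function $g$, there is a subsequence $\{\alpha_n\}\subset\{\alpha_n'\}$ along which $T_{\alpha_n}g(t,x)=:h(t,x)$ exists uniformly on $\mathbb{T}\times S$, so $h\in H(g)\subseteq H(f)$. Next, using boundedness of $u$ together with the bound on $f$ (hence on $u^\Delta=g(t,u)$) on the compact set $S$, the translated functions $u_n(t):=u(t+\alpha_n)$ form an equi-bounded, equi-continuous family, so by an Arzelà–Ascoli-type argument on time scales (refining the subsequence further) $u_n\to v$ uniformly on every compact subset of $\mathbb{T}$, and in fact uniformly on $\mathbb{T}$ after a further diagonal refinement against the inclusion lengths of the relevant $\varepsilon$-translation sets. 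Passing to the limit in the integral form $u_n(t)=u_n(t_0)+\int_{t_0}^{t}g(s+\alpha_n,u_n(s))\Delta s$ and using the uniform convergence $T_{\alpha_n}g\to h$ shows that $v$ solves the hull equation $x^\Delta=h(t,x)$.

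Now I would invoke the hypothesis: that hull equation $x^\Delta=h(t,x)$ has a \emph{unique} solution. Applying the translation $-\alpha_n$ and the same compactness reasoning to $v$, one recovers a solution of $x^\Delta=g(t,x)$; by uniqueness of the solution of the $g$-equation this recovered solution must be $u$ itself, which forces the convergence $u(t+\alpha_n)\to v(t)$ and $v(t-\alpha_n)\to u(t)$ to be consistent, and ultimately shows that the \emph{original} sequence limit is independent of the chosen subsequence. More precisely, the uniqueness hypothesis, applied along both $\{\alpha_n\}$ and any competing subsequence, shows that from every $\{\alpha_n'\}\subset\Pi$ one can extract $\{\alpha_n\}$ with $u(t+\alpha_n)$ uniformly convergent on $\mathbb{T}$; by Lemma \ref{lem29} this is exactly the criterion for $u$ to be almost periodic. (This is the step where one typically argues by contradiction: if $u$ were not almost periodic, one would obtain two subsequences producing two \emph{different} limits $v_1,v_2$, both solving the \emph{same} hull equation $x^\Delta=h(t,x)$ after matching the translation sequences via Lemma \ref{lem28}, contradicting uniqueness.)

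I expect the main obstacle to be the careful bookkeeping in the diagonalization: one must extract common subsequences (in the sense of Lemma \ref{lem28}) simultaneously making $T_{\alpha_n}g$ converge, making $u(t+\alpha_n)$ converge, and controlling the convergence uniformly on all of $\mathbb{T}$ rather than merely on compacta — the last point relying on the relative density of the $\varepsilon$-translation sets and the continuous dependence of solutions of $\eqref{e22}$ on the data, which is where the phrase ``similar to the proof of Theorem 3.2 in \cite{25}'' carries the real weight. The delta-calculus Arzelà–Ascoli step and the passage to the limit under the delta-integral are routine given Lemmas \ref{lem21} and \ref{lem22}, so I would cite them rather than reprove them. \qed
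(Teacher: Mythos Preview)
The paper does not actually give its own proof of this lemma: it merely writes ``Similar to the proof of Theorem~3.2 in \cite{25}, one can easily get the following'' and states the result. The only visible template in the paper is the proof of the next lemma (Lemma~\ref{lem211}), which handles the closely related statement with essentially the same hull machinery.

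Your argument is correct and is one of the two standard routes. The paper's visible approach (in the proof of Lemma~\ref{lem211}) pivots on the \emph{double-sequence} Bochner criterion, Lemma~\ref{lem28}: given $\alpha',\beta'\subset\Pi$, extract common subsequences $\alpha,\beta$ with $T_{\alpha+\beta}f=T_\alpha T_\beta f$; then $T_{\alpha+\beta}\varphi$ and $T_\alpha T_\beta\varphi$ are both solutions of the \emph{same} hull equation $x^\Delta=T_{\alpha+\beta}f(t,x)$, so uniqueness gives $T_{\alpha+\beta}\varphi=T_\alpha T_\beta\varphi$ directly, and Lemma~\ref{lem28} yields almost periodicity in one stroke. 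You instead use the \emph{single-sequence} normality criterion, Lemma~\ref{lem29}: extract a subsequence along which $T_{\alpha_n}g\to h$ and $u(\cdot+\alpha_n)\to v$ (via Arzel\`a--Ascoli), identify $v$ as the unique solution of the $h$-equation, and then argue (by contradiction, matching subsequences through Lemma~\ref{lem28}) that the limit is independent of the subsequence. Both routes require the same compactness step to make the solution translates converge on compacta; the Lemma~\ref{lem28} route is a bit cleaner because uniqueness feeds straight into the characterizing identity, whereas your route needs the additional subsequence-independence argument and the upgrade from compacta to all of $\mathbb{T}$ that you correctly flag as the delicate point. Either way, the substance---translates of solutions solve hull equations, uniqueness pins them down, hence the solution is almost periodic---is the same.
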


\begin{definition}\label{def24}
Suppose that $\varphi(t)$ is any solution of $\eqref{e21}$ on $\mathbb{T}$. $\varphi(t)$ is said to be a strictly positive solution on $\mathbb{T}$ if for $t\in\mathbb{T}$,
\begin{eqnarray*}
0<\inf\limits_{t\in\mathbb{T}}\varphi(t)\leq\sup\limits_{t\in\mathbb{T}}\varphi(t)<\infty.
\end{eqnarray*}
\end{definition}

\begin{lemma}\label{lem211}
If each of the hull equations of system $\eqref{e21}$ has a unique strictly positive solution, then  system $\eqref{e21}$ has a unique strictly positive almost periodic solution.
\end{lemma}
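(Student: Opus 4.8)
The plan is to reduce the almost periodicity of the solution of the original system \eqref{e21} to the uniqueness of strictly positive solutions of its hull equations via Lemma \ref{lem210}, and then to transfer strict positivity from the hull equations back to \eqref{e21} itself. First I would observe that by hypothesis every hull equation \eqref{e22} (with $g\in H(f)$) has a \emph{unique} strictly positive solution; in particular, since $f\in H(f)$, equation \eqref{e21} itself has a unique strictly positive solution, call it $\varphi(t)$. By Lemma \ref{lem210}, uniqueness of solutions to each hull equation forces all these solutions to be almost periodic; hence $\varphi(t)$ is almost periodic. So the existence of an almost periodic solution is essentially immediate once one checks that Lemma \ref{lem210} applies — the only subtlety is that Lemma \ref{lem210} is stated for ``a unique solution'' while here we are given ``a unique strictly positive solution,'' so I would need to argue that within the relevant class (strictly positive solutions, which is the biologically meaningful invariant region for \eqref{e13}) the uniqueness hypothesis of Lemma \ref{lem210} is met.

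Next, for the uniqueness half of the statement, I would argue by contradiction: suppose \eqref{e21} had two distinct strictly positive almost periodic solutions $\varphi_1(t)$ and $\varphi_2(t)$. Both are in particular strictly positive solutions of the hull equation associated with $g=f\in H(f)$, which by hypothesis has a unique strictly positive solution. This contradicts $\varphi_1\not\equiv\varphi_2$. Therefore \eqref{e21} has at most one strictly positive almost periodic solution, and combined with the existence argument above, exactly one.

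The one point that needs genuine care — and which I expect to be the main obstacle — is the interplay between ``solution of the hull equation'' and ``strictly positive solution of the hull equation'' in invoking Lemma \ref{lem210}. Lemma \ref{lem210} guarantees almost periodicity only when the hull equation has a unique solution outright; if a hull equation had, besides its unique strictly positive solution, other (non-positive, or unbounded) solutions, one cannot directly quote Lemma \ref{lem210}. The resolution I would use is to work within the closed positively invariant set $S$ carved out by the permanence bounds established in Section 3 (so that $S$ is a compact subset of the relevant domain, all solutions starting in $S$ stay in $S$ and are strictly positive there), and to reinterpret the hull equations as equations on $\mathbb{T}\times S$; then ``unique strictly positive solution'' becomes ``unique solution on $S$,'' and Lemma \ref{lem210} applies verbatim. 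I would spell out that the uniform two-sided bounds from permanence make $S$ of the form $\{x: m\le x\le M\}$, that $H(f)$ restricted to $\mathbb{T}\times S$ is well-defined by Lemma \ref{lem26}, and that strict positivity in the sense of Definition \ref{def24} is exactly membership in $\mathrm{int}\,S$. With that bookkeeping done, the argument closes: existence from Lemma \ref{lem210}, uniqueness from the contradiction argument, and strict positivity and almost periodicity inherited from the hull-equation hypothesis.
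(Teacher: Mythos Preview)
Your overall strategy is sound and close in spirit to the paper's, but the execution differs in one structural respect and has one logical slip.

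\textbf{Difference in route.} The paper does not invoke Lemma~\ref{lem210}. Instead it argues directly via Bochner's double-sequence criterion (Lemma~\ref{lem28}): starting from the unique strictly positive solution $\varphi$ of \eqref{e21} (which exists since $f\in H(f)$), for any sequences $\alpha',\beta'\subset\Pi$ one extracts common subsequences $\alpha,\beta$ with $T_{\alpha+\beta}f=T_{\alpha}T_{\beta}f$; then $T_{\alpha+\beta}\varphi$ and $T_{\alpha}T_{\beta}\varphi$ are both strictly positive solutions of the same hull equation $x^\Delta=T_{\alpha+\beta}f(t,x)$, hence coincide by hypothesis, and Lemma~\ref{lem28} gives almost periodicity of $\varphi$. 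This sidesteps entirely the ``unique solution versus unique strictly positive solution'' mismatch that worries you, because translates of a strictly positive $\varphi$ inherit the same $\inf$ and $\sup$ bounds and so remain strictly positive. Your route through Lemma~\ref{lem210} is legitimate, but it forces you to manufacture a compact $S$ on which ``unique solution'' and ``unique strictly positive solution'' coincide; the paper's direct Bochner argument makes this unnecessary.

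\textbf{The gap.} Your proposed fix---restricting to the compact set cut out by the permanence bounds of Section~3---is logically out of order. Lemma~\ref{lem211} is a general preliminary result in Section~2 about an abstract system \eqref{e21}; it cannot appeal to estimates proved later for the specific model \eqref{e13}. If you want to stay with your Lemma~\ref{lem210} approach, the clean repair is intrinsic: observe that the hypothesis furnishes, for each $g\in H(f)$, a strictly positive solution $\varphi_g$, and that any translate limit of such a solution is again a strictly positive solution of a hull equation (since the bounds $0<\inf\varphi_g\le\sup\varphi_g<\infty$ pass to uniform limits). That is exactly what the paper uses implicitly, and it removes the need to import Section~3.
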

\begin{proof}Suppose  that $\varphi(t)$ is a strictly positive solution of system \eqref{e22}.  Since $f$ is almost periodic in $t$ uniformly for $x\in S$, by Lemma \ref{lem28}, for any sequences $\alpha', \beta'\subset\Pi$, there exist common subsequences $\alpha\subset\alpha'$, $\beta\subset\beta'$ such that $T_{\alpha+\beta}f(t,x)=T_{\alpha}T_{\beta}f(t,x)$ holds uniformly in $t$ for $x\in S$, $T_{\alpha+\beta}\varphi(t)$  and $T_{\alpha}T_{\beta}\varphi(t)$ uniformly exist on compact set of $\mathbb{T}$. Then  $T_{\alpha+\beta}\varphi(t)$ and $T_{\alpha}T_{\beta}\varphi(t)$ are solutions of the equation
\begin{eqnarray*}
x^{\Delta}(t)=T_{\alpha+\beta}f(t,x),\,\,t\in\mathbb{T},
\end{eqnarray*}
which is the common hull equation of system \eqref{e21}, with respect to $\alpha$ and $\beta$, respectively.
Therefore, we have $T_{\alpha+\beta}\varphi(t)=T_{\alpha}T_{\beta}\varphi(t)$, then by  Lemma \ref{lem28},   $\varphi(t)$ is an almost periodic solution of \eqref{e21}. Since $\alpha\subset\alpha'\subset\Pi$ and $\lim\limits_{n\rightarrow\infty}\alpha'_{n}=+\infty$, $T_{\alpha}f(t,x)=g(t,x)$ exists uniformly in $t\in\mathbb{T}$ for $x\in S$. For the sequence $\alpha\subset\alpha'$, we conclude that $T_{\alpha}\varphi(t)=\psi(t)$ exists uniformly in $t\in\mathbb{T}$. According to the uniqueness of the solution and $T_{\alpha}\psi(t)=\psi(t)$, one obtains that $\varphi(t)=\psi(t)$.   The proof is completed.
\end{proof}

\begin{lemma}\label{lem212}\cite{20}
Assume that $a\in\mathcal{R}$ and $t_{0}\in{\mathbb{T}}$, if $a\in\mathcal{R}^{+}$ on $\mathbb{T}^{k}$, then $e_a(t,t_{0})> 0$ for all $t\in{\mathbb{T}}$.
\end{lemma}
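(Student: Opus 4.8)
The plan is to argue directly from the integral representation of the generalized exponential function. By definition,
\[
e_a(t,t_0)=\exp\bigg\{\int_{t_0}^{t}\xi_{\mu(\tau)}(a(\tau))\,\Delta\tau\bigg\},
\]
so it suffices to show that the exponent is a well-defined \emph{real} number for every $t\in\mathbb{T}$; then $e_a(t,t_0)$ is the exponential of a real number and hence strictly positive.

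First I would verify that the integrand $\tau\mapsto\xi_{\mu(\tau)}(a(\tau))$ is real-valued and rd-continuous on $\mathbb{T}^k$. At a right-dense $\tau$ we have $\mu(\tau)=0$ and $\xi_0(a(\tau))=a(\tau)\in\mathbb{R}$. At a right-scattered $\tau$ we have $\mu(\tau)>0$ and $\xi_{\mu(\tau)}(a(\tau))=\mathrm{Log}\big(1+\mu(\tau)a(\tau)\big)/\mu(\tau)$; here the hypothesis $a\in\mathcal{R}^{+}$ on $\mathbb{T}^k$ says precisely that $1+\mu(\tau)a(\tau)>0$, so the argument of $\mathrm{Log}$ is a strictly positive real and $\mathrm{Log}\big(1+\mu(\tau)a(\tau)\big)=\ln\big(1+\mu(\tau)a(\tau)\big)\in\mathbb{R}$. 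Since $\mu$ and $a$ are rd-continuous and $\xi_h(z)$ is jointly continuous on $\{1+hz>0\}$ (with the removable singularity at $h=0$ filled in by $\xi_0(z)=z$), the composition is rd-continuous, so its $\Delta$-integral over $[t_0,t]_{\mathbb{T}}$ exists and is a real number. Hence $e_a(t,t_0)>0$ for all $t\ge t_0$, and for $t<t_0$ one either reads the integral with reversed limits (still real) or uses $e_a(t,t_0)=1/e_a(t_0,t)$ from Lemma~\ref{lem22}.

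I expect no genuine obstacle here: the whole point is the observation that membership in $\mathcal{R}^{+}$, rather than merely in $\mathcal{R}$, is exactly what keeps the argument of $\mathrm{Log}$ positive — if one only knew $a\in\mathcal{R}$ then $1+\mu(\tau)a(\tau)$ could be negative at a right-scattered $\tau$, forcing $\mathrm{Log}$ to contribute an imaginary part and possibly making $e_a(t,t_0)$ negative. Identifying and using this fact is the only real content.

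As a self-contained alternative avoiding the explicit formula, one can use that $y(t):=e_a(t,t_0)$ is the unique solution of the IVP $y^{\Delta}=a(t)y(t)$, $y(t_0)=1$, and argue by a connectedness/step argument. Suppose $y$ first fails to be positive at some $t^{\ast}>t_0$ (the case $t<t_0$ being symmetric, or reducible via the reciprocal identity). If $t^{\ast}$ is left-scattered with $s=\rho(t^{\ast})$, then $y(s)>0$ and $y(t^{\ast})=y(\sigma(s))=\big(1+\mu(s)a(s)\big)y(s)>0$ by $a\in\mathcal{R}^{+}$, contradicting $y(t^{\ast})\le 0$. If $t^{\ast}$ is left-dense, continuity forces $y(t^{\ast})=0$, and then uniqueness of solutions of the reversible linear dynamic equation forces $y\equiv 0$ on $\mathbb{T}$, contradicting $y(t_0)=1$.
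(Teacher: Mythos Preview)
Your argument is correct. The paper itself does not prove Lemma~\ref{lem212}; it is simply quoted from the textbook~\cite{20}, so there is no ``paper's own proof'' to compare against. Your first approach---observing that $a\in\mathcal{R}^{+}$ forces $1+\mu(\tau)a(\tau)>0$ at every right-scattered $\tau$, so that $\xi_{\mu(\tau)}(a(\tau))$ is real-valued and hence $e_a(t,t_0)$ is the ordinary exponential of a real number---is exactly the standard proof given in Bohner--Peterson, and your alternative IVP/connectedness argument is a valid independent route.
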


\begin{lemma}\label{lem214}
Assume that $x(t)>0$ on $\mathbb{T}$, $-b\in\mathcal{R}^{+}$, $b\geq 0$, $a, d> 0$, $t-\tau(t)\in\mathbb{T}$, where $\tau:\mathbb{T}\rightarrow\mathbb{R}^{+}$ is a rd-continuous function and $\bar{\tau}=\sup\limits_{t\in \mathbb{T}}\{\tau(t)\}$.
\begin{itemize}
    \item  [$(i)$] If $x^{\Delta}(t)\leq x^{\sigma}(t)(b-ax(t-\tau(t)))+d$ for $t\geq t_{0}$, $t_{0}\in{\mathbb{T}}$, with the initial condition $x(t)=\phi(t)\geq0$ for $t\in[t_{0}-\bar{\tau}, t_{0}]_{\mathbb{T}}$ and $\phi(t_{0})>0$, then
        \begin{eqnarray*}
 \limsup_{t\rightarrow+\infty}x(t)\leq-\frac{d}{b}+\bigg(\frac{d}{b}+\bar{x}\bigg)\exp\bigg\{-\frac{\bar{\tau}\log(1-b\bar{\mu})}{\bar{\mu}}\bigg\}:=M,
   \end{eqnarray*}
where $\bar{\mu}=\sup\limits_{\theta\in \mathbb{T}}\{\mu(\theta)\}$ and $\bar{x}$ is the unique positive root of $x(ax-b)-d=0$.

Especially, if $d=0$, then
\begin{eqnarray*}
M=\frac{b}{a}\exp\bigg\{-\frac{\bar{\tau}\log(1-b\bar{\mu})}{\bar{\mu}}\bigg\}.
\end{eqnarray*}
    \item  [$(ii)$] If $x^{\Delta}(t)\geq x^{\sigma}(t)(b-ax(t-\tau(t)))+d$ for $t\geq t_{0}$, $t_{0}\in{\mathbb{T}}$, with the initial condition $x(t)=\phi(t)\geq0$ for $t\in[t_{0}-\bar{\tau}, t_{0}]_{\mathbb{T}}$, $\phi(t_{0})>0$ and there exists a positive constant $N>0$ such that $\limsup_{t\rightarrow+\infty}x(t)\leq N<+\infty$, then
   \begin{eqnarray*}
 \liminf_{t\rightarrow+\infty}x(t)\geq \frac{b}{a}e^{-aN\bar{\tau}}:=m.
   \end{eqnarray*}
\end{itemize}
\end{lemma}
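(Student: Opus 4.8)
The plan is to prove the two parts separately, each by reducing the delay differential inequality on $\mathbb{T}$ to a comparison with an auxiliary equation whose asymptotics we can control. For part $(i)$, I would first dispose of the delay: if $\limsup_{t\to+\infty}x(t)=:X$, then for any $\varepsilon>0$ there is $T$ with $x(s)\le X+\varepsilon$ for $s\ge T-\bar\tau$, so in particular $x(t-\tau(t))$ is eventually close to values bounded above by $X+\varepsilon$. I would pick a sequence $t_k\to+\infty$ with $x(t_k)\to X$ and, at points where $x$ has a local-maximum-like behaviour, use $x^{\Delta}(t_k)\ge 0$ (or handle right-scattered points via $x^{\sigma}\ge x$) together with the inequality $x^{\Delta}(t)\le x^{\sigma}(t)(b-ax(t-\tau(t)))+d$ to get $0\le (X+\text{small})(b-a\,x(t_k-\tau(t_k)))+d$. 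Combined with a lower control on how much $x$ can have dropped over the delay window $[t_k-\bar\tau,t_k]$ — which is where the exponential factor $\exp\{-\bar\tau\log(1-b\bar\mu)/\bar\mu\}$ enters, since on that window $x^{\Delta}\le bx^{\sigma}+d$ forces $x(t_k-\tau(t_k))\ge$ (something like $-d/b+(d/b+x(t_k))e^{-\cdots}$ by the generalized-exponential estimate, using $-b\in\mathcal R^+$ and Lemma~\ref{lem212}) — this yields $X\le M$ after letting $\varepsilon\to0$ and solving the resulting quadratic-type inequality, whose threshold is exactly $\bar x$, the positive root of $x(ax-b)-d=0$.

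For part $(ii)$, the argument is parallel but runs at the $\liminf$. Writing $Y:=\liminf_{t\to+\infty}x(t)$, I would use the hypothesis $\limsup x(t)\le N$ to bound $x(t-\tau(t))\le N+\varepsilon$ eventually, so that $x^{\Delta}(t)\ge x^{\sigma}(t)(b-a(N+\varepsilon))+d\ge x^{\sigma}(t)(b-a(N+\varepsilon))$. Picking $t_k\to+\infty$ with $x(t_k)\to Y$ at points of local-minimum type (where $x^{\Delta}(t_k)\le 0$, again adjusting for scattered points), one gets $b-a\,x(t_k-\tau(t_k))\le 0$ in the limit, i.e. $x$ was $\ge b/a$ somewhere in the delay window; then integrating the one-sided bound $x^{\Delta}\ge x^{\sigma}(b-a(N+\varepsilon))$ backward/forward over a window of length at most $\bar\tau$ gives $Y\ge \frac{b}{a}e_{\,b-a(N+\varepsilon)}(\cdot,\cdot)$-type estimate, and passing $\varepsilon\to0$ and using the crude exponential bound $e_{-aN}(t,t-\bar\tau)\ge e^{-aN\bar\tau}$ produces $Y\ge \frac{b}{a}e^{-aN\bar\tau}=:m$. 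The case $d=0$ in part $(i)$ is just the specialization where the quadratic $x(ax-b)-d=0$ degenerates to $x=b/a$, making $\bar x=b/a$ and the formula for $M$ collapse to the stated one.

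The technical heart — and the step I expect to be the main obstacle — is making the "local maximum/minimum" selection rigorous on a general time scale, where $x$ need not attain its $\limsup$ and where right-scattered points mean $x^{\Delta}\ge0$ does not literally hold at a maximizing point. The clean way is: either argue that $x^{\Delta}(t)\le 0$ eventually (contradiction with $x$ staying near $X$), or use that at a point $t_k$ where $x(t_k)$ is within $\varepsilon$ of $\sup_{s\ge t_k}x(s)$ one has $x^{\sigma}(t_k)\le x(t_k)+\varepsilon'$, and feed this into the inequality directly without ever differentiating at an extremum. The other delicate point is the backward estimate over the delay window: one must check $-b\in\mathcal R^+$ (equivalently $1-\mu(t)b>0$, which is implicit in $b\bar\mu<1$ so that $\log(1-b\bar\mu)$ makes sense) so that Lemmas~\ref{lem22} and~\ref{lem212} apply and $e_{-b}$ stays positive, and then bound the generalized exponential over a window of length $\le\bar\tau$ by a constant depending only on $b,\bar\mu,\bar\tau$, which is precisely $\exp\{-\bar\tau\log(1-b\bar\mu)/\bar\mu\}$. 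Once these two time-scale subtleties are handled, the rest is the routine algebra of passing to the limit and solving for $M$ and $m$.
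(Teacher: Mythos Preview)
Your proposal is correct and follows essentially the same route as the paper: pick a sequence $\{t_k\}$ at near-extremal points where the sign of $x^{\Delta}(t_k)$ is controlled, use the differential inequality at $t_k$ to bound the delayed value $x(t_k-\tau(t_k))$ (by $\bar x$ in part~(i), by $b/a$ from below in part~(ii)), then integrate the crude linear bound $x^{\Delta}\le bx^{\sigma}+d$ (resp.\ $x^{\Delta}\ge -a(N+\varepsilon)x^{\sigma}$) across the delay window and estimate the resulting generalized exponential by the stated constant. The only organizational differences are that the paper handles part~(i) as two successive contradictions (first ruling out $\limsup x=+\infty$, then $\limsup x>M$) rather than working directly with $X=\limsup x$, and it phrases the delay-window estimate in the forward direction --- upper-bounding $x(t_k)$ from $x(t_k-\tau(t_k))\le\bar x$ --- which is just the rearrangement of your backward lower bound on $x(t_k-\tau(t_k))$.
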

\begin{proof}
The proof of (i). It is obviously that there exists a unique positive root of the equation $x(ax-b)-d=0$. Suppose that $\limsup\limits_{t\rightarrow+\infty}x(t)=+\infty$. Then there exists a subsequence $\{t_{k}\}_{k=1}^{\infty}\subset\mathbb{T}$ with $t_{k}\rightarrow+\infty$ as $k\rightarrow+\infty$ such that
 \begin{eqnarray*}
\lim\limits_{k\rightarrow+\infty}x(t_{k})=+\infty,\quad x^{\sigma}(t_{k})\geq \bar{x},\quad x^{\Delta}(t)|_{t=t_{k}}\geq0,\quad k=1, 2, \ldots.
\end{eqnarray*}
Thus, we have
\begin{eqnarray*}
x^{\sigma}(t_{k})(b-ax(t_{k}-\tau(t_k)))+d\geq 0,
\end{eqnarray*}
so
\begin{eqnarray}\label{e23}
x(t_{k}-\tau(t_k))\leq\frac{1}{a}\bigg(b+\frac{d}{x^{\sigma}(t_{k})}\bigg)\leq \frac{1}{a}\bigg(b+\frac{d}{\bar{x}}\bigg)=\bar{x},\,\,k=1, 2, \ldots.
\end{eqnarray}
Considering the following inequality
\begin{eqnarray*}
x^{\Delta}(t)\leq bx^{\sigma}(t)+d,\,\,\,{\rm with}\,\,x(t_{0}^{\ast})>0, t_{0}^{\ast}\geq t_{0}.
\end{eqnarray*}
Noticing that
\begin{eqnarray}\label{e24}
[x(t)e_{-b}(t, t_{0}^{\ast})]^{\Delta}&=&e_{-b}(t, t_{0}^{\ast})x^{\Delta}(t)-be_{-b}(t, t_{0}^{\ast})x^{\sigma}(t)\nonumber\\
&=&e_{-b}(t, t_{0}^{\ast})(x^{\Delta}(t)-bx^{\sigma}(t))\nonumber\\
&\leq&de_{-b}(t, t_{0}^{\ast}).
\end{eqnarray}
Integrating inequality \eqref{e24} from $t_{0}^{\ast}$ to $t$, we have
\begin{eqnarray*}
e_{-b}(t, t_{0}^{\ast})x(t)-x(t_{0}^{\ast})&\leq&\int_{t_{0}^{\ast}}^{t}de_{-b}(\theta, t_{0}^{\ast})\Delta\theta\nonumber\\
&=&-\frac{d}{b}\int_{t_{0}^{\ast}}^{t}[e_{-b}(\theta, t_{0}^{\ast})]^{\Delta}\Delta\theta\nonumber\\
&=&-\frac{d}{b}[e_{-b}(t, t_{0}^{\ast})-1],
\end{eqnarray*}
then
\begin{eqnarray}\label{e25}
x(t)\leq-\frac{d}{b}+\bigg(\frac{d}{b}+x(t_{0}^{\ast})\bigg)e_{\ominus(-b)}(t, t_{0}^{\ast}).
\end{eqnarray}
In view of \eqref{e23} and \eqref{e25}, we obtain
\begin{eqnarray}\label{e26}
x(t_{k})&\leq&-\frac{d}{b}+\bigg(\frac{d}{b}+x(t_{k}-\tau(t_k))\bigg)e_{\ominus(-b)}(t_{k}, t_{k}-\tau(t_k))\nonumber\\
&\leq&-\frac{d}{b}+\bigg(\frac{d}{b}+\bar{x}\bigg)e_{\ominus(-b)}(t_{k}, t_{k}-\tau(t_k)),\,\,k=1, 2, \ldots.
\end{eqnarray}
For every $\theta\in\mathbb{T}$, if $\mu(\theta)=0$, then
\begin{eqnarray*}
\xi_{\mu}(\ominus(-b))=\ominus(-b)=\frac{b}{1-\mu(\theta)b}=b,
\end{eqnarray*}
if $\mu(\theta)\neq0$, then
\begin{eqnarray*}
\xi_{\mu}(\ominus(-b))=\frac{\log\bigg(1+\frac{b\mu(\theta)}{1-\mu(\theta)b}\bigg)}{\mu(\theta)}
=-\frac{\log(1-b\mu(\theta))}{\mu(\theta)}\leq-\frac{\log(1-b\bar{\mu})}{\bar{\mu}}(>b).
\end{eqnarray*}
Hence, for every $\theta\in\mathbb{T}$, we have
\begin{eqnarray*}
\int_{t_{k}-\tau(t_k)}^{t_{k}}\xi_{\mu}(\ominus(-b))\Delta\theta&\leq&\max\bigg\{\int_{t_{k}-\tau(t_k)}^{t_{k}}b\Delta\theta, \int_{t_{k}-\tau(t_k)}^{t_{k}}-\frac{\log(1-b\bar{\mu})}{\bar{\mu}}\Delta\theta\bigg\}\\
&=& -\frac{\tau(t_k)\log(1-b\bar{\mu})}{\bar{\mu}}\\
&\leq&-\frac{\bar{\tau}\log(1-b\bar{\mu})}{\bar{\mu}},\,\,k=1, 2, \ldots.
\end{eqnarray*}
Thus
\begin{eqnarray}\label{e27}
e_{\ominus(-b)}(t_{k}, t_{k}-\tau(t_k))\leq \exp\bigg\{-\frac{\bar{\tau}\log(1-b\bar{\mu})}{\bar{\mu}}\bigg\},\,\,k=1, 2, \ldots.
\end{eqnarray}
It follows from $\eqref{e26}$ and $\eqref{e27}$ that
\begin{eqnarray*}
x(t_{k})\leq-\frac{d}{b}+\bigg(\frac{d}{b}+\bar{x}\bigg)
\exp\bigg\{-\frac{\bar{\tau}\log(1-b\bar{\mu})}{\bar{\mu}}\bigg\}=M,\,\,k=1, 2, \ldots,
\end{eqnarray*}
then
\begin{eqnarray*}
 \limsup_{k\rightarrow+\infty}x(t_{k})\leq M.
   \end{eqnarray*}
Especially, if $d=0$, then $\bar{x}=\frac{b}{a}$, we can easily know that
    \begin{eqnarray*}
M=\frac{b}{a}\exp\bigg\{-\frac{\bar{\tau}\log(1-b\bar{\mu})}{\bar{\mu}}\bigg\}.
\end{eqnarray*}
Hence $\limsup\limits_{k\rightarrow+\infty}x(t_{k})<+\infty$. This contradicts the assumption.

We claim
\begin{eqnarray*}
 \limsup_{t\rightarrow+\infty}x(t)\leq M.
   \end{eqnarray*}
Otherwise,
\begin{eqnarray*}
 \limsup_{t\rightarrow+\infty}x(t)> M,
   \end{eqnarray*}
 there exists $\varepsilon$ such that $x(t)> M+\varepsilon$ for any $t\in\mathbb{T}$.
So we can choose $\{t_{k}\}_{k=1}^{\infty}\subset\mathbb{T}$ such that
\begin{eqnarray*}
x(t_{k})> M+\varepsilon,\quad x^{\sigma}(t_{k})\geq\bar{x},\quad x^{\Delta}(t)|_{t=t_{k}}\geq0,\quad k=1, 2, \ldots.
   \end{eqnarray*}
By a similar process as above, we can derive that
\begin{eqnarray*}
x(t_{k})\leq M,
   \end{eqnarray*}
which is a contradiction. Hence, our claim holds.

The proof of (ii). Suppose that $\liminf\limits_{t\rightarrow+\infty}x(t)=0$. Then there exists a subsequence $\{\tilde{t_{k}}\}_{1}^{\infty}\subset\mathbb{T}$ with $\tilde{t_{k}}\rightarrow+\infty$ as $k\rightarrow+\infty$ such that
 \begin{eqnarray*}
\lim_{k\rightarrow+\infty}x(\tilde{t_{k}})=0,\quad x^{\Delta}(t)|_{t=\tilde{t_{k}}}\leq0,\quad k=1, 2, \ldots.
\end{eqnarray*}
We have $b-ax(\tilde{t_{k}}-\tau(\tilde{t_{k}}))\leq0$, then $x(\tilde{t_{k}}-\tau(\tilde{t_{k}}))\geq\frac{b}{a}$.
For any positive constant $\varepsilon$ small enough, it follows from $\limsup_{t\rightarrow+\infty}x(t)\leq N$ that there exists large enough $T_{1}$
such that
\begin{eqnarray*}
 x(t)\leq N+\varepsilon,\,\, t>T_{1},
\end{eqnarray*}
then $x(\tilde{t_{k}}-\tau(\tilde{t_{k}}))\leq N+\varepsilon$ for $\tilde{t_{k}}>T_{1}+\tau(\tilde{t_{k}})$. So we have
\begin{eqnarray}\label{e28}
 x^{\Delta}(\tilde{t_{k}})&\geq& x^{\sigma}(\tilde{t_{k}})(b-ax(\tilde{t_{k}}-\tau(\tilde{t_{k}})))+d\nonumber\\
 &\geq& x^{\sigma}(\tilde{t_{k}})(b-ax(\tilde{t_{k}}-\tau(\tilde{t_{k}})))\nonumber\\
 &\geq& -a(N+\varepsilon)x^{\sigma}(\tilde{t_{k}}),\quad k=1, 2, \ldots.
\end{eqnarray}

Considering the following inequality
\begin{eqnarray*}
x^{\Delta}(t)\geq -a(N+\varepsilon)x^{\sigma}(t),\,\,\,{\rm with}\,\,x(t_{0}^{\ast})>0, t_{0}^{\ast}\geq t_{0}.
\end{eqnarray*}
For $t>t_{0}^{\ast}\geq t_{0}$, we have
\begin{eqnarray}\label{e29}
 x(t)\geq x(t_{0}^{\ast})e_{\ominus(a(N+\varepsilon))}(t, t_{0}^{\ast}).
   \end{eqnarray}
From $\eqref{e28}$ and $\eqref{e29}$, we obtain
\begin{eqnarray}\label{e210}
 x(\tilde{t_{k}})\geq x(\tilde{t_{k}}-\tau(\tilde{t_{k}}))e_{\ominus(a(N+\varepsilon))}(\tilde{t_{k}}, \tilde{t_{k}}-\tau(\tilde{t_{k}})).
   \end{eqnarray}
For every $\theta\in\mathbb{T}$, if $\mu(\theta)=0$, then
\begin{eqnarray*}
\xi_{\mu}(\ominus(a(N+\varepsilon)))=\ominus(a(N+\varepsilon))=-\frac{a(N+\varepsilon)}{1+\mu(\theta)a(N+\varepsilon)}=-a(N+\varepsilon),
\end{eqnarray*}
if $\mu(\theta)\neq0$, then
\begin{eqnarray*}
\xi_{\mu}(\ominus(a(N+\varepsilon)))=\frac{\log\bigg(1-\frac{a(N+\varepsilon)\mu(\theta)}{1+\mu(\theta)a(N+\varepsilon)}\bigg)}{\mu(\theta)}
=-\frac{\log(1+a(N+\varepsilon)\mu(\theta))}{\mu(\theta)}\geq-a(N+\varepsilon).
\end{eqnarray*}

Hence, for every $\theta\in\mathbb{T}$, we have
\begin{eqnarray*}
\int_{\tilde{t_{k}}-\tau(\tilde{t_k)}}^{\tilde{t_{k}}}\xi_{\mu}(\ominus(a(N+\varepsilon)))\Delta\theta
&\geq&\int_{\tilde{t_{k}}-\tau(\tilde{t_{k}})}^{\tilde{t_{k}}}-a(N+\varepsilon)\Delta\theta\\
&=&-a(N+\varepsilon)\tau(\tilde{t_{k}})\\
&\geq&-a(N+\varepsilon)\bar{\tau},\,\,k=1, 2, \ldots,
\end{eqnarray*}
so
\begin{eqnarray*}
\exp\bigg\{\int_{\tilde{t_{k}}-\tau(\tilde{t_k)}}^{\tilde{t_{k}}}\xi_{\mu}(\ominus(a(N+\varepsilon)))\bigg\}\Delta\theta
\geq e^{-a(N+\varepsilon)\bar{\tau}},\,\,k=1, 2, \ldots.
\end{eqnarray*}
Thus
\begin{eqnarray}\label{e211}
e_{\ominus(a(N+\varepsilon))}(\tilde{t_{k}}, \tilde{t_{k}}-\tau(\tilde{t_k}))\geq e^{-a(N+\varepsilon)\bar{\tau}},\,\,k=1, 2, \ldots.
\end{eqnarray}
By use of $\eqref{e210}$ and $\eqref{e211}$, we obtain
\begin{eqnarray*}
 x(\tilde{t_{k}})\geq x(\tilde{t_{k}}-\tau(\tilde{t_{k}}))e^{-a(N+\varepsilon)\bar{\tau}}\geq\frac{b}{a}e^{-a(N+\varepsilon)\bar{\tau}},\,\,k=1, 2, \ldots.
   \end{eqnarray*}
Letting $\varepsilon\rightarrow0$, then
\begin{eqnarray*}
 \liminf_{k\rightarrow+\infty}x(\tilde{t_{k}})\geq\frac{b}{a}e^{-aN\bar{\tau}}=m,\,\,k=1, 2, \ldots.
   \end{eqnarray*}

Similarly, we can get
\begin{eqnarray*}
 \liminf_{t\rightarrow+\infty}x(t)\geq m.
   \end{eqnarray*}
The proof of Lemma \ref{lem214} is completed.
\end{proof}

Similar to the proof of Lemma \ref{lem214}, we can easily obtain the following results:
\begin{lemma}\label{lem215}
Assume that $x(t)>0$ on $\mathbb{T}, b\geq0, a, d> 0, t-\tau(t)\in\mathbb{T}$, where $\tau(t):\mathbb{T}\rightarrow\mathbb{R}^{+}$ is a rd-continuous function  and $\bar{\tau}=\sup\limits_{t\in \mathbb{T}}\{\tau(t)\}$.
\begin{itemize}
    \item  [$(i)$] If $x^{\Delta}(t)\leq x(t)(b-ax(t-\tau(t)))+d$ for $t\geq t_{0}$, $t_{0}\in{\mathbb{T}}$, with initial condition $x(t)=\phi(t)\geq0$ for $t\in[t_{0}-\tau, t_{0}]_{\mathbb{T}}$, $\phi(t_{0})>0$, then
    \begin{eqnarray*}
 \limsup_{t\rightarrow+\infty}x(t)\leq -\frac{d}{b}+\bigg(\frac{d}{b}+\bar{x}\bigg)e^{b\bar{\tau}}:=\tilde{M},
   \end{eqnarray*}
where $\bar{x}$ is the unique positive root of $x(ax-b)-d=0$.

Especially, if $d=0$, then
    \begin{eqnarray*}
\tilde{M}=\frac{b}{a}e^{b\bar{\tau}}.
\end{eqnarray*}
    \item  [$(ii)$] If $x^{\Delta}(t)\geq x(t)(b-ax(t-\tau(t)))+d$ for $t\geq t_{0}$, $t_{0}\in{\mathbb{T}}$, with initial condition $x(t)=\phi(t)\geq0$ for $t\in[t_{0}-\tau, t_{0}]_{\mathbb{T}}$, $\phi(t_{0})>0$ and there exists a positive constant $\tilde{N}>0$ such that $\limsup_{t\rightarrow+\infty}x(t)\leq \tilde{N}<+\infty$ and $-a\tilde{N}\in\mathcal{R}^{+}$, then
    \begin{eqnarray*}
 \liminf_{t\rightarrow+\infty}x(t)\geq \frac{b}{a}\exp\bigg\{\frac{\bar{\tau}\log(1-a\tilde{N}\bar{\mu})}{\bar{\mu}}\bigg\}:=\tilde{m},
   \end{eqnarray*}
where $\bar{\mu}=\sup\limits_{\theta\in \mathbb{T}}\{\mu(\theta)\}$.
\end{itemize}
\end{lemma}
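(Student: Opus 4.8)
The plan is to mirror the proof of Lemma~\ref{lem214} step by step; the only genuine change is that, because in Lemma~\ref{lem215} the damping terms multiply $x(t)$ instead of $x^{\sigma}(t)$, the auxiliary scalar dynamic inequalities that arise along the way are $x^{\Delta}(t)\le bx(t)+d$ (for part (i)) and $x^{\Delta}(t)\ge-a(\tilde N+\e)x(t)$ (for part (ii)), and these must be handled by Lemma~\ref{lem24} and by the exponential-multiplier and positivity results of Lemmas~\ref{lem21} and~\ref{lem212}, respectively. That substitution is precisely what replaces the time-scale factor $\exp\{-\bar\tau\log(1-b\bar\mu)/\bar\mu\}$ of Lemma~\ref{lem214} by an ordinary exponential $e^{b\bar\tau}$ in $\tilde M$, while $\tilde m$ still carries a time-scale exponential.

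For part (i): as in Lemma~\ref{lem214}, $x(ax-b)-d=0$ has a unique positive root $\bar x$, and $\tilde M\ge\bar x$. Suppose for contradiction that $\limsup_{t\to+\infty}x(t)=+\infty$, and choose $\{t_k\}\subset\mathbb{T}$ with $t_k\to+\infty$, $x(t_k)\to+\infty$, $x(t_k)\ge\bar x$ and $x^{\Delta}(t)|_{t=t_k}\ge0$. Substituting into the inequality gives $x(t_k)(b-ax(t_k-\tau(t_k)))+d\ge0$, hence $x(t_k-\tau(t_k))\le\frac1a\big(b+\frac{d}{x(t_k)}\big)\le\frac1a\big(b+\frac{d}{\bar x}\big)=\bar x$. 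Now solve the auxiliary inequality $x^{\Delta}(t)\le bx(t)+d$ with $x(t_0^{\ast})>0$, $t_0^{\ast}\ge t_0$: setting $y(t)=x(t)+\frac db>0$ gives $y^{\Delta}(t)\le by(t)$, so by Lemma~\ref{lem24} $[\ln y(t)]^{\Delta}\le y^{\Delta}(t)/y(t)\le b$, and integrating from $t_0^{\ast}$ to $t$ yields $x(t)\le-\frac db+\big(\frac db+x(t_0^{\ast})\big)e^{b(t-t_0^{\ast})}$. Applying this on $[t_k-\tau(t_k),t_k]_{\mathbb{T}}$ together with $x(t_k-\tau(t_k))\le\bar x$ gives $x(t_k)\le-\frac db+\big(\frac db+\bar x\big)e^{b\bar\tau}=\tilde M$, contradicting $x(t_k)\to+\infty$. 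To upgrade this to $\limsup_{t\to+\infty}x(t)\le\tilde M$ one argues once more by contradiction: if it fails, pick $\{t_k\}$ with $x(t_k)>\tilde M+\e$ (which exceeds $\bar x$) and $x^{\Delta}(t)|_{t=t_k}\ge0$ and repeat verbatim to obtain $x(t_k)\le\tilde M$. When $d=0$ one has $\bar x=b/a$, hence $\tilde M=(b/a)e^{b\bar\tau}$.

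For part (ii): suppose for contradiction that $\liminf_{t\to+\infty}x(t)=0$, and choose $\{\tilde t_k\}\subset\mathbb{T}$ with $\tilde t_k\to+\infty$, $x(\tilde t_k)\to0$ and $x^{\Delta}(t)|_{t=\tilde t_k}\le0$; then $b-ax(\tilde t_k-\tau(\tilde t_k))\le0$, i.e. $x(\tilde t_k-\tau(\tilde t_k))\ge b/a$. Fix $\e>0$ small enough that $-a(\tilde N+\e)\in\mathcal{R}^{+}$; since $\limsup_{t\to+\infty}x(t)\le\tilde N$, for $\tilde t_k$ large one has $x(\tilde t_k-\tau(\tilde t_k))\le\tilde N+\e$, and then, using $b\ge0$, $d\ge0$ and $x(\tilde t_k)>0$, $x^{\Delta}(\tilde t_k)\ge x(\tilde t_k)(b-ax(\tilde t_k-\tau(\tilde t_k)))\ge-a(\tilde N+\e)x(\tilde t_k)$. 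Now solve the auxiliary inequality $x^{\Delta}(t)\ge-a(\tilde N+\e)x(t)$: with $p=-a(\tilde N+\e)\in\mathcal{R}^{+}$, a direct computation via Lemma~\ref{lem21}(ii) gives $[x(t)e_{\ominus p}(t,t_0^{\ast})]^{\Delta}=e_{\ominus p}(t,t_0^{\ast})\,(x^{\Delta}(t)-px(t))/(1+\mu(t)p)\ge0$, where $e_{\ominus p}>0$ by Lemma~\ref{lem212} and $1+\mu(t)p>0$, so $x(t)\ge x(t_0^{\ast})e_{-a(\tilde N+\e)}(t,t_0^{\ast})$ for $t\ge t_0^{\ast}$. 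Applying this on $[\tilde t_k-\tau(\tilde t_k),\tilde t_k]_{\mathbb{T}}$ and using $x(\tilde t_k-\tau(\tilde t_k))\ge b/a$ gives $x(\tilde t_k)\ge\frac ba\,e_{-a(\tilde N+\e)}(\tilde t_k,\tilde t_k-\tau(\tilde t_k))$. Since $\xi_{\mu(\theta)}(-a(\tilde N+\e))=\log(1-\mu(\theta)a(\tilde N+\e))/\mu(\theta)$ is nonincreasing in $\mu(\theta)$, it is $\ge\log(1-\bar\mu a(\tilde N+\e))/\bar\mu$, whence $x(\tilde t_k)\ge\frac ba\exp\{\bar\tau\log(1-a(\tilde N+\e)\bar\mu)/\bar\mu\}$; letting $\e\to0$ gives $\liminf_{k\to\infty}x(\tilde t_k)\ge\tilde m>0$, a contradiction. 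A symmetric repetition (if $\liminf_{t\to+\infty}x(t)<\tilde m$, pick $\{\tilde t_k\}$ with $x(\tilde t_k)\to\ell<\tilde m$ and $x^{\Delta}(t)|_{t=\tilde t_k}\le0$ and run the same estimate) upgrades the conclusion to $\liminf_{t\to+\infty}x(t)\ge\tilde m$.

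The step I expect to be the real obstacle is the treatment of these two auxiliary scalar dynamic inequalities on $\mathbb{T}$. In Lemma~\ref{lem214} the $x^{\sigma}$-form made the identity $[x(t)e_{-b}(t,t_0^{\ast})]^{\Delta}=e_{-b}(t,t_0^{\ast})(x^{\Delta}(t)-bx^{\sigma}(t))$ immediate; with the $x(t)$-form one must instead pass to the logarithmic comparison of Lemma~\ref{lem24} for the upper bound (which is why a plain exponential surfaces in $\tilde M$) and to the $e_{\ominus p}$-multiplier identity for the lower bound, where positivity of $e_{\ominus p}$, hence well-definedness of the bound, is exactly what forces the hypothesis $-a\tilde N\in\mathcal{R}^{+}$ via Lemma~\ref{lem212}. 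The accompanying sign bookkeeping (discarding $b\ge0$ and $d\ge0$, and tracking the sign of $b-a(\tilde N+\e)$, which need not be positive) together with the monotonicity estimate on $\xi_{\mu}$ in the lower bound are routine but need care.
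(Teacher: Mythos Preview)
Your proposal is correct and follows the paper's proof essentially step for step: the same contradiction setup, the same choice of sequences with $x^{\Delta}(t_k)\ge 0$ (resp.\ $\le 0$), the same bound $x(t_k-\tau(t_k))\le\bar x$ (resp.\ $\ge b/a$), and the same cylinder-transformation estimate on $e_{-a(\tilde N+\varepsilon)}$ in part~(ii). The only difference is in handling the auxiliary inequality $x^{\Delta}\le bx+d$ in part~(i): the paper multiplies by $e_{\ominus b}(t,t_0^{\ast})$, integrates to reach $e_b(t_k,t_k-\tau(t_k))$, and then bounds this by $e^{b\bar\tau}$ via $\xi_\mu(b)\le b$, whereas you substitute $y=x+d/b$ and use Lemma~\ref{lem24} to pass directly to the ordinary exponential---a slightly shorter variant of the same step.
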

\begin{proof}
The proof of $(i)$.  Suppose that $\limsup\limits_{t\rightarrow+\infty}x(t)=+\infty$. Then there exists a subsequence $\{t_{k}\}_{k=1}^{\infty}\subset\mathbb{T}$ with $t_{k}\rightarrow+\infty$ as $k\rightarrow+\infty$ such that
 \begin{eqnarray*}
\lim\limits_{k\rightarrow+\infty}x(t_{k})=+\infty,\quad x(t_{k})\geq \bar{x},\quad x^{\Delta}(t)|_{t=t_{k}}\geq0,\quad k=1, 2, \ldots.
\end{eqnarray*}
Thus, we have
\begin{eqnarray*}
x(t_{k})(b-ax(t_{k}-\tau(t_k)))+d\geq 0,
\end{eqnarray*}
so
\begin{eqnarray}\label{e212}
x(t_{k}-\tau(t_k))\leq\frac{1}{a}\bigg(b+\frac{d}{x(t_{k})}\bigg)\leq \frac{1}{a}\bigg(b+\frac{d}{\bar{x}}\bigg)=\bar{x},\,\,k=1, 2, \ldots.
\end{eqnarray}
Considering the following inequality
\begin{eqnarray*}
x^{\Delta}(t)\leq bx(t)+d,\,\,\,{\rm with}\,\,x(t_{0}^{\ast})>0, t_{0}^{\ast}\geq t_{0}.
\end{eqnarray*}
Noticing that
\begin{eqnarray}\label{e213}
[x(t)e_{\ominus b}(t, t_{0}^{\ast})]^{\Delta}&=&e_{\ominus b}(\sigma(t), t_{0}^{\ast})x^{\Delta}(t)-be_{\ominus b}(\sigma(t), t_{0}^{\ast})x(t)\nonumber\\
&=&e_{\ominus b}(\sigma(t), t_{0}^{\ast})(x^{\Delta}(t)-bx(t))\nonumber\\
&\leq&de_{\ominus b}(t, t_{0}^{\ast}).
\end{eqnarray}
Integrating inequality \eqref{e213} from $t_{0}^{\ast}$ to $t$, we have
\begin{eqnarray*}
e_{\ominus b}(t, t_{0}^{\ast})x(t)-x(t_{0}^{\ast})&\leq&\int_{t_{0}^{\ast}}^{t}de_{\ominus b}(\sigma(\tau), t_{0}^{\ast})\Delta\tau\nonumber\\
&=&-\frac{d}{b}\int_{t_{0}^{\ast}}^{t}[e_{\ominus b}(\tau, t_{0}^{\ast})]^{\Delta}\Delta\tau\nonumber\\
&=&-\frac{d}{b}[e_{\ominus b}(t, t_{0}^{\ast})-1],
\end{eqnarray*}
then
\begin{eqnarray}\label{e214}
x(t)\leq-\frac{d}{b}+\bigg(\frac{d}{b}+x(t_{0}^{\ast})\bigg)e_{b}(t, t_{0}^{\ast}).
\end{eqnarray}
In view of \eqref{e212} and \eqref{e214}, we obtain
\begin{eqnarray}\label{e215}
x(t_{k})&\leq&-\frac{d}{b}+\bigg(\frac{d}{b}+x(t_{k}-\tau(t_{k}))\bigg)e_{b}(t_{k}, t_{k}-\tau(t_{k}))\nonumber\\
&\leq&-\frac{d}{b}+\bigg(\frac{d}{b}+\bar{x}\bigg)e_{b}(t_{k}, t_{k}-\tau(t_{k})),\,\,k=1, 2, \ldots.
\end{eqnarray}
For every $\theta\in\mathbb{T}$, if $\mu(\theta)=0$, then
\begin{eqnarray*}
\xi_{\mu}(b)=b,
\end{eqnarray*}
if $\mu(\theta)\neq0$, then
\begin{eqnarray*}
\xi_{\mu}(b)=\frac{\log(1+b\mu(\theta))}{\mu(\theta)}\leq b.
\end{eqnarray*}
Hence, for every $\theta\in\mathbb{T}$, we have
\begin{eqnarray*}
\int_{t_{k}-\tau(t_k)}^{t_{k}}\xi_{\mu}(b)\Delta\theta&\leq&\int_{t_{k}-\tau(t_k)}^{t_{k}}b\Delta\theta\leq b\tau(t_k)\leq b\bar{\tau},\,\,k=1, 2, \ldots,
\end{eqnarray*}
so
\begin{eqnarray*}
\exp\bigg\{\int_{t_{k}-\tau(t_k)}^{t_{k}}\xi_{\mu}(b)\bigg\}\Delta\theta\leq e^{b\bar{\tau}},\,\,k=1, 2, \ldots.
\end{eqnarray*}
Thus
\begin{eqnarray}\label{e216}
e_{b}(t_{k}, t_{k}-\tau(t_k))\leq e^{b\bar{\tau}},\,\,k=1, 2, \ldots.
\end{eqnarray}
It follows from $\eqref{e215}$ and $\eqref{e216}$ that
\begin{eqnarray*}
x(t_{k})\leq-\frac{d}{b}+\bigg(\frac{d}{b}+\bar{x}\bigg)e^{b\bar{\tau}}=\tilde{M},\,\,k=1, 2, \ldots,
\end{eqnarray*}
then
\begin{eqnarray*}
 \limsup_{k\rightarrow+\infty}x(t_{k})\leq \tilde{M}.
   \end{eqnarray*}
Especially, if $d=0$, then $\bar{x}=\frac{b}{a}$, we can easily know that
    \begin{eqnarray*}
\tilde{M}=\frac{b}{a}e^{b\bar{\tau}}.
\end{eqnarray*}
Hence $\limsup\limits_{k\rightarrow+\infty}x(t_{k})<+\infty$. This contradicts the assumption.

Similarly, we can get
\begin{eqnarray*}
 \limsup_{t\rightarrow+\infty}x(t)\leq \tilde{M}.
   \end{eqnarray*}

The proof of (ii). Suppose that $\liminf\limits_{t\rightarrow+\infty}x(t)=0$. Then there exists a subsequence $\{\tilde{t_{k}}\}_{1}^{\infty}\subset\mathbb{T}$ with $\tilde{t_{k}}\rightarrow+\infty$ as $k\rightarrow+\infty$ such that
 \begin{eqnarray*}
\lim_{k\rightarrow+\infty}x(\tilde{t_{k}})=0,\quad x^{\Delta}(t)|_{t=\tilde{t_{k}}}\leq0,\quad k=1, 2, \ldots.
\end{eqnarray*}
We have $b-ax(\tilde{t_{k}}-\tau(\tilde{t_{k}}))\leq0$, then $x(\tilde{t_{k}}-\tau(\tilde{t_{k}}))\geq\frac{b}{a}$.
For any positive constant $\varepsilon$ small enough, it follows from $\limsup_{t\rightarrow+\infty}x(t)\leq \tilde{N}$ that there exists large enough $T_{2}$
such that
\begin{eqnarray*}
 x(t)\leq \tilde{N}+\varepsilon,\,\, t>T_{2},
\end{eqnarray*}
then $x(\tilde{t_{k}}-\tau(\tilde{t_{k}}))\leq \tilde{N}+\varepsilon$ for $\tilde{t_{k}}>T_{2}+\tau(\tilde{t_{k}})$ and $b-a(\tilde{N}+\varepsilon)\leq0$. So we have
\begin{eqnarray}\label{e217}
 x^{\Delta}(\tilde{t_{k}})&\geq& x(\tilde{t_{k}})(b-ax(\tilde{t_{k}}-\tau(\tilde{t_{k}})))+d\nonumber\\
 &\geq& x(\tilde{t_{k}})(b-ax(\tilde{t_{k}}-\tau(\tilde{t_{k}})))\nonumber\\
 &\geq& -a(\tilde{N}+\varepsilon)x(\tilde{t_{k}}),\quad k=1, 2, \ldots.
\end{eqnarray}

Considering the following inequality
\begin{eqnarray*}
x^{\Delta}(t)\geq -a(\tilde{N}+\varepsilon)x(t),\,\,\,{\rm with}\,\,x(t_{0}^{\ast})>0,\,\, t_{0}^{\ast}\geq t_{0}.
\end{eqnarray*}
For $t>t_{0}^{\ast}\geq t_{0}$, we have
\begin{eqnarray}\label{e218}
 x(t)\geq x(t_{0}^{\ast})e_{-a(\tilde{N}+\varepsilon)}(t, t_{0}^{\ast}).
   \end{eqnarray}
From $\eqref{e217}$ and $\eqref{e218}$, we obtain
\begin{eqnarray}\label{e219}
 x(\tilde{t_{k}})\geq x(\tilde{t_{k}}-\tau(\tilde{t_{k}}))e_{-a(N+\varepsilon)}(\tilde{t_{k}}, \tilde{t_{k}}-\tau(\tilde{t_{k}})).
   \end{eqnarray}
For every $\theta\in\mathbb{T}$, if $\mu(\theta)=0$, then
\begin{eqnarray*}
\xi_{\mu}(-a(\tilde{N}+\varepsilon))=-a(\tilde{N}+\varepsilon),
\end{eqnarray*}
if $\mu(\theta)\neq0$, then
\begin{eqnarray*}
\xi_{\mu}(-a(\tilde{N}+\varepsilon))=\frac{\log(1-a(\tilde{N}
+\varepsilon)\mu(\theta))}{\mu(\theta)}
\geq\frac{\log(1-a(\tilde{N}+\varepsilon)\bar{\mu})}{\bar{\mu}}<-a(\tilde{N}+\varepsilon).
\end{eqnarray*}

Hence, for every $\theta\in\mathbb{T}$, we have
\begin{eqnarray*}
\int_{\tilde{t_{k}}-\tau(\tilde{t_{k}})}^{\tilde{t_{k}}}\xi_{\mu}(-a(\tilde{N}+\varepsilon))\Delta\theta
&\geq&\min\bigg\{\int_{\tilde{t_{k}}-\tau(\tilde{t_{k}})}^{\tilde{t_{k}}}-a(\tilde{N}+\varepsilon)\Delta\theta, \int_{\tilde{t_{k}}-\tau(\tilde{t_{k}})}^{\tilde{t_{k}}}\frac{\log(1-a(\tilde{N}+\varepsilon)\bar{\mu})}{\bar{\mu}}\Delta\theta\bigg\}\\
&=& \frac{\tau(\tilde{t_{k}})\log(1-a(\tilde{N}+\varepsilon)\bar{\mu})}{\bar{\mu}}\\
&\geq&\frac{\bar{\tau}\log(1-a(\tilde{N}+\varepsilon)\bar{\mu})}{\bar{\mu}},\,\,k=1, 2, \ldots,
\end{eqnarray*}
so
\begin{eqnarray*}
\exp\bigg\{\int_{\tilde{t_{k}}-\tau(\tilde{t_k)}}^{\tilde{t_{k}}}\xi_{\mu}(-a(\tilde{N}+\varepsilon))\bigg\}\Delta\theta
\geq \exp\bigg\{\frac{\bar{\tau}\log(1-a(\tilde{N}+\varepsilon)\bar{\mu})}{\bar{\mu}}\bigg\},\,\,k=1, 2, \ldots.
\end{eqnarray*}
Thus
\begin{eqnarray}\label{e220}
e_{-a(\tilde{N}+\varepsilon)}(\tilde{t_{k}}, \tilde{t_{k}}-\tau(\tilde{t_k}))\geq \exp\bigg\{\frac{\bar{\tau}\log(1-a(\tilde{N}+\varepsilon)\bar{\mu})}{\bar{\mu}}\bigg\},\,\,k=1, 2, \ldots.
\end{eqnarray}
By use of $\eqref{e219}$ and $\eqref{e220}$, we obtain
\begin{eqnarray*}
 x(\tilde{t_{k}})&\geq& x(\tilde{t_{k}}-\tau(\tilde{t_{k}}))\exp\bigg\{\frac{\bar{\tau}\log(1-a(\tilde{N}+\varepsilon)\bar{\mu})}{\bar{\mu}}\bigg\}\\
 &\geq&\frac{b}{a}\exp\bigg\{\frac{\bar{\tau}\log(1-a(\tilde{N}+\varepsilon)\bar{\mu})}{\bar{\mu}}\bigg\},\,\,k=1, 2, \ldots.
   \end{eqnarray*}
Letting $\varepsilon\rightarrow0$, we have
\begin{eqnarray*}
 \liminf_{k\rightarrow+\infty}x(\tilde{t_{k}})\geq\frac{b}{a}\exp\bigg\{\frac{\bar{\tau}\log(1-a\tilde{N}\bar{\mu})}{\bar{\mu}}\bigg\}=\tilde{m},\,\,k=1, 2, \ldots.
   \end{eqnarray*}

Similarly, we can get
\begin{eqnarray*}
 \liminf_{t\rightarrow+\infty}x(t)\geq \tilde{m}.
   \end{eqnarray*}
The proof of Lemma \ref{lem215} is completed.
\end{proof}

\section{Permanence}

\setcounter{equation}{0}
{\setlength\arraycolsep{2pt}}
 \indent

In this section, we will give our main results about the permanence of system $\eqref{e13}$. For convenience, we introduce the following notations:
\begin{eqnarray*}
&&x_{i}^{M}=\ln\bigg\{\frac{ a_{i}^{u}+\sum_{j=1,j\neq i}^{n}c_{ij}^{u}}{b_{i}^{l}}\exp\bigg\{-\frac{\tau^{+}\log(1-(a_{i}^{u}+\sum_{j=1,j\neq i}^{n}c_{ij}^{u})\bar{\mu})}{\bar{\mu}}\bigg\}\bigg\},\\
&&x_{i}^{m}=\ln\bigg\{\frac{a_{i}^{l}}{b_{i}^{u}}\exp\bigg\{\frac{\tau^{+}\log(1-b_{i}^{u}e^{x_{i}^{M}}\bar{\mu})}{\bar{\mu}}\bigg\}\bigg\},\,\,i=1, 2,\ldots,n,
\end{eqnarray*}
where $\bar{\mu}=\sup\limits_{t\in \mathbb{T}}\{\mu(t)\}$.
\begin{enumerate}
  \item [$(H_{3})$]
  $a_{i}^{l}\exp\bigg\{\frac{\tau^{+}\log(1-b_{i}^{u}e^{x_{i}^{M}}\bar{\mu})}{\bar{\mu}}\bigg\}>b_{i}^{u}$, $ -(a_{i}^{u}+\sum_{j=1,j\neq i}^{n}c_{ij}^{u})\in\mathcal{R}^{+}$ and  $-b_{i}^{u}e^{x_{i}^{M}}\in\mathcal{R}^{+}$, $i=1, 2,\ldots,n$.
\end{enumerate}

\begin{lemma}\label{lem31}
Assume that $(H_{1})-(H_{3})$ hold. Let $x(t)=(x_{1}(t), x_{2}(t)),\ldots, x_{n}(t))$ be any solution of system $\eqref{e13}$ with initial condition $\eqref{e14}$, then
 \begin{eqnarray*}
x_i^{m}\leq\liminf_{t\rightarrow+\infty}x_i(t)\leq\limsup_{t\rightarrow+\infty}x_i(t)\leq x_i^{M},\,\,i=1, 2,\ldots,n.
 \end{eqnarray*}
\end{lemma}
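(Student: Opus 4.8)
The plan is to reduce system \eqref{e13} to the two scalar dynamic inequalities covered by Lemmas \ref{lem214} and \ref{lem215}, using the substitution $u_i(t)=e^{x_i(t)}$, so that $u_i(t)>0$ on $\mathbb{T}$. By Lemma \ref{lem24}, the delta derivative $x_i^{\Delta}(t)=[\ln u_i(t)]^{\Delta}$ satisfies $u_i^{\Delta}(t)/u_i^{\sigma}(t)\le x_i^{\Delta}(t)\le u_i^{\Delta}(t)/u_i(t)$. The only nonlinearity to control is the mutualism term: by $(H_1)$, $d_{ij}>1$ and $u_j>0$ give $0<\frac{u_j(t-\delta_j(t))}{d_{ij}+u_j(t-\delta_j(t))}<1$. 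Dropping this (nonnegative) term to estimate $x_i^{\Delta}$ from below, and replacing it by $1$ together with $a_i(t)\le a_i^u$, $c_{ij}(t)\le c_{ij}^u$ to estimate from above, and using $a_i^l\le a_i(t)$, $b_i^l\le b_i(t)\le b_i^u$, I obtain, for $t\ge t_0$,
\begin{eqnarray*}
a_i^l-b_i^u u_i(t-\tau_i(t))\ \le\ x_i^{\Delta}(t)\ \le\ \Big(a_i^u+\sum_{j\neq i}c_{ij}^u\Big)-b_i^l\,u_i(t-\tau_i(t)).
\end{eqnarray*}

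First I would derive the upper estimate. Combining $x_i^{\Delta}(t)\ge u_i^{\Delta}(t)/u_i^{\sigma}(t)$ with the right-hand inequality gives $u_i^{\Delta}(t)\le u_i^{\sigma}(t)\big[(a_i^u+\sum_{j\neq i}c_{ij}^u)-b_i^l u_i(t-\tau_i(t))\big]$, which is exactly the hypothesis of Lemma \ref{lem214}(i) with $b=a_i^u+\sum_{j\neq i}c_{ij}^u$, $a=b_i^l$, $d=0$, $\tau=\tau_i$; the regressivity requirement $-b\in\mathcal{R}^{+}$ is the condition $-(a_i^u+\sum_{j\neq i}c_{ij}^u)\in\mathcal{R}^{+}$ in $(H_3)$, while $u_i(s)=e^{\varphi_i(s)}\ge 1>0$ on $[t_0-\theta,t_0]_{\mathbb{T}}$ takes care of the initial condition \eqref{e14}. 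Lemma \ref{lem214}(i) then yields $\limsup_{t\to\infty}u_i(t)\le \frac{b}{a}\exp\{-\bar{\tau}_i\log(1-b\bar{\mu})/\bar{\mu}\}$ with $\bar{\tau}_i=\sup_t\tau_i(t)$; since this exponent is nonnegative and nondecreasing in $\bar{\tau}_i$, enlarging $\bar{\tau}_i$ to $\tau^{+}$ only weakens the bound, so $\limsup_{t\to\infty}u_i(t)\le e^{x_i^M}$ and hence $\limsup_{t\to\infty}x_i(t)\le x_i^M$.

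Next I would feed $\tilde{N}=e^{x_i^M}$ into the lower estimate. From $x_i^{\Delta}(t)\le u_i^{\Delta}(t)/u_i(t)$ and the left-hand inequality, $u_i^{\Delta}(t)\ge u_i(t)\big[a_i^l-b_i^u u_i(t-\tau_i(t))\big]$, which is the hypothesis of Lemma \ref{lem215}(ii) with $b=a_i^l$, $a=b_i^u$, $d=0$; the requirement $\limsup_{t\to\infty}u_i(t)\le \tilde{N}<\infty$ has just been proved, and $-a\tilde{N}=-b_i^u e^{x_i^M}\in\mathcal{R}^{+}$ is the last condition of $(H_3)$. Hence $\liminf_{t\to\infty}u_i(t)\ge \frac{a_i^l}{b_i^u}\exp\{\bar{\tau}_i\log(1-b_i^u e^{x_i^M}\bar{\mu})/\bar{\mu}\}$, and since $\log(1-b_i^u e^{x_i^M}\bar{\mu})<0$ (guaranteed by $-b_i^u e^{x_i^M}\in\mathcal{R}^{+}$), enlarging $\bar{\tau}_i$ to $\tau^{+}$ decreases the bound, so $\liminf_{t\to\infty}u_i(t)\ge e^{x_i^m}$, i.e.\ $\liminf_{t\to\infty}x_i(t)\ge x_i^m$. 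Running the argument for every $i=1,2,\dots,n$ finishes the proof; the inequality $a_i^l\exp\{\cdots\}>b_i^u$ in $(H_3)$ is precisely $x_i^m>0$, which makes the estimate a genuine permanence bound.

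The argument is mostly bookkeeping once Lemmas \ref{lem214} and \ref{lem215} are available. The points that need attention are: choosing the correct side of Lemma \ref{lem24} in each direction (the $u_i^{\Delta}/u_i^{\sigma}$ bound for the $\limsup$ step, the $u_i^{\Delta}/u_i$ bound for the $\liminf$ step); respecting the logical order, since the $\liminf$ argument uses the finite bound $e^{x_i^M}$ to linearize the retarded term and so must follow the $\limsup$ argument; and the monotonicity of the resulting bounds in $\bar{\tau}$, which lets one replace each $\sup_t\tau_i(t)$ by the common $\tau^{+}$ appearing in the definitions of $x_i^M$ and $x_i^m$.
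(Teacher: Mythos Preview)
Your proposal is correct and follows essentially the same route as the paper's own proof: the same substitution $u_i=e^{x_i}$, the same two-sided bound on $x_i^{\Delta}$ coming from $0<\frac{u_j}{d_{ij}+u_j}<1$, the same use of Lemma~\ref{lem24} to pass to inequalities for $u_i^{\Delta}$ with $u_i^{\sigma}$ (upper step) and $u_i$ (lower step), and then Lemma~\ref{lem214}(i) and Lemma~\ref{lem215}(ii) with $d=0$. Your explicit identification of which clause of $(H_3)$ supplies each regressivity hypothesis, and the monotonicity-in-$\bar{\tau}$ remark justifying the replacement of $\sup_t\tau_i(t)$ by $\tau^{+}$, are details the paper leaves implicit but uses in the same way.
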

\begin{proof}
Let $x(t)=(x_{1}(t), x_{2}(t)),\ldots, x_{n}(t))$ be any solution of system $\eqref{e13}$ with initial condition $\eqref{e14}$. From \eqref{e13} it follows that
\begin{eqnarray*}\label{e31}
x_{i}^{\Delta}(t) \leq a_{i}^{u}+\sum_{j=1,j\neq i}^{n}c_{ij}^{u}-b_{i}^{l}e^{x_{i}(t-\tau_{i}(t))},\,\,i=1,2,\ldots, n.
\end{eqnarray*}
Let $N_{i}(t)=e^{x_{i}(t)}$, obviously $N_{i}(t)>0$, the above inequality yields that
\begin{eqnarray*}
[\ln(N_{i}(t))]^{\Delta}\leq a_{i}^{u}+\sum_{j=1,j\neq i}^{n}c_{ij}^{u}-b_{i}^{l}N_{i}(t-\tau_{i}(t)), \,\,i=1, 2,\ldots,n.
\end{eqnarray*}
In view of Lemma 2.4, we have
\begin{eqnarray*}
\frac{N_{i}^{\Delta}(t)}{N_{i}(\sigma(t))}\leq a_{i}^{u}+\sum_{j=1,j\neq i}^{n}c_{ij}^{u}-b_{i}^{l}N_{i}(t-\tau_{i}(t)),
\end{eqnarray*}
 then
\begin{eqnarray*}
N_{i}^{\Delta}(t)\leq N_{i}(\sigma(t))\bigg[ a_{i}^{u}+\sum_{j=1,j\neq i}^{n}c_{ij}^{u}-b_{i}^{l}N_{i}(t-\tau_{i}(t))\bigg], \,\,i=1, 2,\ldots,n.
\end{eqnarray*}
By applying Lemma 2.14, there exists a constant $T_{0}$ such that
\begin{eqnarray*}
N_{i}(t)\leq \frac{ a_{i}^{u}+\sum_{j=1,j\neq i}^{n}c_{ij}^{u}}{b_{i}^{l}}\exp\bigg\{-\frac{\tau^{+}\log(1-(a_{i}^{u}+\sum_{j=1,j\neq i}^{n}c_{ij}^{u})\bar{\mu})}{\bar{\mu}}\bigg\}
\end{eqnarray*}
for $t\geq T_{0}+\tau^{+}$. That is, for $i=1, 2,\ldots,n$,
\begin{eqnarray*}
\limsup_{t\rightarrow+\infty}x_{i}(t)\leq \ln\bigg\{\frac{ a_{i}^{u}+\sum_{j=1,j\neq i}^{n}c_{ij}^{u}}{b_{i}^{l}}\exp\bigg\{-\frac{\tau^{+}\log(1-(a_{i}^{u}+\sum_{j=1,j\neq i}^{n}c_{ij}^{u})\bar{\mu})}{\bar{\mu}}\bigg\}\bigg\}=x_{i}^{M}.
\end{eqnarray*}

On the other hand, from $\eqref{e13}$  it follows that
\begin{eqnarray*}
x_{i}^{\Delta}(t) &\geq& a_{i}^{l}-b_{i}^{u}e^{x_{i}(t-\tau_{i}(t))},\,\,i=1,2,\ldots, n.
\end{eqnarray*}
 Let $N_{i}(t)=e^{x_{i}(t)}$, obviously $N_{i}(t)>0$, then the above inequality yields that
\begin{eqnarray*}
[\ln(N_{i}(t))]^{\Delta}\geq a_{i}^{l}-b_{i}^{u}N_{i}(t-\tau_{i}(t)).
\end{eqnarray*}
In view of Lemma \ref{lem24}, we have
\begin{eqnarray*}
\frac{N_{i}^{\Delta}(t)}{N_{i}(t)}\geq a_{i}^{l}-b_{i}^{u}N_{i}(t-\tau_{i}(t)),
\end{eqnarray*}
 then
\begin{eqnarray*}
N_{i}^{\Delta}(t)\geq N_{i}(t)[ a_{i}^{l}-b_{i}^{u}N_{i}(t-\tau_{i}(t))],\,\,i=1,2,\ldots, n.
\end{eqnarray*}
By applying Lemma \ref{lem215} and $a_{i}^{l}\exp\bigg\{\frac{\tau^{+}\log(1-b_{i}^{u}e^{x_{i}^{M}}\bar{\mu})}{\bar{\mu}}\bigg\}>b_{i}^{u}$, there exists a constant $T_{1}$ such that
\begin{eqnarray*}
N_{i}(t)\geq \frac{a_{i}^{l}}{b_{i}^{u}}\exp\bigg\{\frac{\tau^{+}\log(1-b_{i}^{u}e^{x_{i}^{M}}\bar{\mu})}{\bar{\mu}}\bigg\},\,\,i=1,2,\ldots, n
\end{eqnarray*}
for $t\geq T_{1}+\tau^{+}$. Therefore,
\begin{eqnarray*}
\liminf_{t\rightarrow+\infty}x_{i}(t)
\geq \ln\bigg\{\frac{a_{i}^{l}}{b_{i}^{u}}\exp\bigg\{\frac{\tau^{+}\log(1-b_{i}^{u}e^{x_{i}^{M}}\bar{\mu})}{\bar{\mu}}\bigg\}\bigg\}
=x_{i}^{m},\,\,i=1,2,\ldots, n.
\end{eqnarray*}
The proof is complete.
\end{proof}

\begin{theorem}\label{thm31}
Assume that $(H_{1})-(H_{3})$ hold, then system $\eqref{e13}$ with initial condition $\eqref{e14}$ is permanence.
\end{theorem}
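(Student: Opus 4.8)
The plan is to show that Theorem~\ref{thm31} is essentially an immediate corollary of Lemma~\ref{lem31}, together with the standard definition of permanence for a population system. Recall that a system of the form \eqref{e13} (or equivalently its exponential version with $N_i(t)=e^{x_i(t)}$) is called \emph{permanent} if there exist positive constants $m^\ast$ and $M^\ast$, independent of the particular solution, such that every solution with a positive initial condition \eqref{e14} satisfies $m^\ast \le \liminf_{t\to+\infty} N_i(t) \le \limsup_{t\to+\infty} N_i(t) \le M^\ast$ for each $i=1,2,\ldots,n$. So the first step is simply to record this definition (in the $N_i$ variables, since the biologically meaningful densities are $y_i=e^{x_i}$ by Remark~\ref{r1}).

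Next I would translate the conclusion of Lemma~\ref{lem31} from the $x_i$ variables into the $N_i=e^{x_i}$ variables. Since $u\mapsto e^u$ is continuous and strictly increasing, the bounds
\begin{eqnarray*}
x_i^m \le \liminf_{t\to+\infty} x_i(t) \le \limsup_{t\to+\infty} x_i(t) \le x_i^M
\end{eqnarray*}
are equivalent to
\begin{eqnarray*}
e^{x_i^m} \le \liminf_{t\to+\infty} N_i(t) \le \limsup_{t\to+\infty} N_i(t) \le e^{x_i^M},\quad i=1,2,\ldots,n.
\end{eqnarray*}
Then I would set $m^\ast = \min_{1\le i\le n} e^{x_i^m}$ and $M^\ast = \max_{1\le i\le n} e^{x_i^M}$, and observe that by hypotheses $(H_1)$--$(H_3)$ these are well-defined finite positive constants: the expressions $x_i^M$ and $x_i^m$ are finite real numbers (the logarithms appearing inside are of positive quantities, guaranteed by the regressivity conditions $-(a_i^u+\sum_{j\ne i}c_{ij}^u)\in\mathcal{R}^+$ and $-b_i^u e^{x_i^M}\in\mathcal{R}^+$ in $(H_3)$, which force $1-(a_i^u+\sum_{j\ne i}c_{ij}^u)\bar\mu>0$ and $1-b_i^u e^{x_i^M}\bar\mu>0$), and the condition $a_i^l\exp\{\tau^+\log(1-b_i^u e^{x_i^M}\bar\mu)/\bar\mu\}>b_i^u$ ensures $x_i^m$ is genuinely finite and that $e^{x_i^m}>0$.

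Finally, I would conclude: for an arbitrary solution $x(t)$ of \eqref{e13} with initial condition \eqref{e14}, Lemma~\ref{lem31} gives $m^\ast \le \liminf_{t\to+\infty} N_i(t) \le \limsup_{t\to+\infty} N_i(t) \le M^\ast$ for all $i$, and since $m^\ast, M^\ast$ do not depend on the solution, system \eqref{e13} is permanent by definition. I do not anticipate a genuine obstacle here; the only thing requiring a word of care is the bookkeeping that the constants $x_i^M$, $x_i^m$ are finite and that $\bar\mu<\infty$ is implicitly assumed on the (almost periodic) time scale, so that the exponentials are well defined — but all of this is already built into $(H_1)$--$(H_3)$ and the preceding lemmas, so the proof is a short deduction rather than a new argument.
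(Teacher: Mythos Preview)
Your proposal is correct and matches the paper's approach: the paper states Theorem~\ref{thm31} immediately after Lemma~\ref{lem31} with no separate proof, treating permanence as a direct consequence of the uniform bounds $x_i^m\le\liminf x_i(t)\le\limsup x_i(t)\le x_i^M$. Your additional remarks about translating to $N_i=e^{x_i}$ and verifying finiteness of the constants via $(H_3)$ simply make explicit what the paper leaves implicit.
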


\section{Global attractivity}

\setcounter{equation}{0}
{\setlength\arraycolsep{2pt}}
 \indent

In this section, we will study the global attractivity of system $\eqref{e13}$.

\begin{definition}\label{def41}
System $\eqref{e13}$ is said to be globally attractive if any two positive solutions $x(t)=(x_{1}(t), x_{2}(t),\ldots,x_{n}(t))$ with initial value $\varphi(s)=(\varphi_{1}(s),\varphi_2(s)\ldots,\varphi_n(s))$ and $y(t)=(y_{1}(t), y_{2}(t),\ldots,y_{n}(t))$ with initial value $\psi(s)=(\psi_{1}(s),\psi_2(s),\ldots,\psi_n(s))$ of system $\eqref{e13}$   satisfy
\begin{eqnarray*}
 \lim_{t\rightarrow\infty}|x_{i}(t)-y_{i}(t)|=0,\,\,i=1, 2,\ldots,n.
\end{eqnarray*}
\end{definition}

\begin{theorem}\label{thm41}
Assume that $(H_{1})-(H_{3})$ hold.
Suppose further that
\begin{itemize}
 \item  [$(H_{4})$] $\gamma_{i}>0$, where $i=1, 2,\ldots,n$,
\begin{eqnarray*}
\gamma_{i}&=&b_{i}^{l}e^{x_{i}^{m}}-2\bar{\mu}(b_{i}^{u}e^{x_{i}^{M}})^{2}-\frac{(b_{i}^{u}e^{x_{i}^{M}})^{2}[2\bar{\mu} b_{i}^{u}e^{x_{i}^{M}}+1](2\tau^{+}-\tau^{-})}{1-\tau^{\Delta}}\nonumber\\
&&-{\displaystyle\sum_{j=1,j\neq i}^{n}\frac{(c_{ij}^{u}e^{x_{j}^{M}})^{2}(2\bar{\mu} b_{i}^{u}e^{x_{i}^{M}}+1)(2\delta^{+}-\delta^{-})}{(d_{ij}+e^{x_{j}^{m}})^{4}(1-\delta^{\Delta})}}-{\displaystyle\sum_{j=1,j\neq i}^{n}\frac{c_{ji}^{u}e^{x_{i}^{M}}(2\bar{\mu} b_{j}^{u}e^{x_{j}^{M}}+1)}{(d_{ji}+e^{x_{i}^{m}})^{2}}}\nonumber\\
&&\times\bigg[1+\frac{b_{j}^{u}e^{x_{j}^{M}}(\tau^{+}+\delta^{+}-\delta^{-})}
{1-\delta^{\Delta}}+\frac{e^{x_{i}^{M}}b_{i}^{u}(\tau^{+}+\delta^{+}-\tau^{-})}{1-\tau^{\Delta}}\bigg],
\end{eqnarray*}
where $x_{i}^{m}$, $x_{i}^{M}$ defined in Lemma \ref{lem31} and $\bar{\mu}=\sup\limits_{t\in \mathbb{T}}\{\mu(t)\}$.
\end{itemize}
Then system $\eqref{e13}$ is globally attractive.
\end{theorem}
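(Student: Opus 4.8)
The plan is to construct a Lyapunov functional on the time scale $\mathbb{T}$ and show that its delta derivative along the difference of two positive solutions is eventually negative definite, which forces $|x_i(t)-y_i(t)|\to 0$. Let $x(t)=(x_1(t),\dots,x_n(t))$ and $y(t)=(y_1(t),\dots,y_n(t))$ be two positive solutions of \eqref{e13} with the stated initial data. By Lemma \ref{lem31} (permanence), there is $T^{*}\geq t_0$ such that for all $t\geq T^{*}$ and all $i$, both $x_i(t)$ and $y_i(t)$ lie in $[x_i^m,x_i^M]$; consequently $e^{x_i(t)},e^{y_i(t)}\in[e^{x_i^m},e^{x_i^M}]$. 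First I would set $u_i(t)=x_i(t)-y_i(t)$ and write down the equation satisfied by $u_i^{\Delta}(t)$: subtracting the two copies of \eqref{e13} and using the mean value theorem for $e^{\cdot}$ and for $s\mapsto s/(d_{ij}+s)$ gives
\begin{eqnarray*}
u_i^{\Delta}(t) = -b_i(t)e^{\eta_i(t)}u_i(t-\tau_i(t)) + \sum_{j=1,j\neq i}^{n} c_{ij}(t)\,\frac{d_{ij}}{(d_{ij}+e^{\zeta_{ij}(t)})^{2}}\,e^{\zeta_{ij}(t)}u_j(t-\delta_j(t)),
\end{eqnarray*}
where $\eta_i(t)$ lies between $x_i(t-\tau_i(t))$ and $y_i(t-\tau_i(t))$, similarly for $\zeta_{ij}(t)$. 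Then I would rewrite the delayed term $u_i(t-\tau_i(t)) = u_i(\sigma(t)) - \int_{t-\tau_i(t)}^{\sigma(t)} u_i^{\Delta}(s)\,\Delta s$ (and likewise for $u_j(t-\delta_j(t))$) so as to expose a leading $-b_i^l e^{x_i^m}$-type damping on $u_i(\sigma(t))$ and to absorb the delay contributions into integral remainder terms; the various coefficients $\bar\mu$, $(2\tau^{+}-\tau^{-})/(1-\tau^{\Delta})$, etc., in $(H_4)$ come precisely from bounding these remainders using $(H_1)$, $(H_2)$ and the permanence bounds.

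Next I would define the Lyapunov functional. The natural choice is $V(t)=\sum_{i=1}^{n}V_i(t)$ with a main term $V_{i1}(t)=u_i^{2}(t)$ (or $|u_i(t)|$, but the squared form meshes better with the quadratic coefficients appearing in $\gamma_i$), plus double-integral correction terms of the schematic form
\begin{eqnarray*}
V_{i2}(t)=\sum_{j}\int_{t-\delta_j(t)}^{t}\!\!\int_{s}^{t} (\cdots) u_j^{2}(\xi)\,\Delta\xi\,\Delta s,\qquad V_{i3}(t)=\int_{t-\tau_i(t)}^{t}\!\!\int_{s}^{t}(\cdots)u_i^{2}(\xi)\,\Delta\xi\,\Delta s,
\end{eqnarray*}
together with single-integral terms $\int_{t-\tau_i(t)}^{t}(\cdots)u_i^{2}(s)\Delta s$ and $\int_{t-\delta_j(t)}^{t}(\cdots)u_j^{2}(s)\Delta s$ whose coefficients are chosen so that the "boundary" contributions produced when differentiating the double integrals cancel the cross terms generated above. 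Computing $V^{\Delta}(t)$ using the product and quotient rules from Lemma \ref{lem21}, the chain-type estimate $[(u_i)^2]^{\Delta} = (u_i^{\sigma}+u_i)u_i^{\Delta}$, property $(iv)$ of Lemma \ref{lem21} for differentiating the integrals (which is where the factor $1-\tau^{\Delta}$, $1-\delta^{\Delta}$ enter, via $(t-\tau_i(t))^{\Delta}=1-\tau_i^{\Delta}(t)\geq 1-\tau^{\Delta}>0$ by $(H_2)$), and then estimating every coefficient by its sup/inf over $\mathbb{T}$ and every $e^{x_i},e^{y_i}$ by the permanence bounds, I expect to arrive at
\begin{eqnarray*}
V^{\Delta}(t)\leq -\sum_{i=1}^{n}\gamma_i\, u_i^{2}(t),\qquad t\geq T^{**},
\end{eqnarray*}
with exactly the $\gamma_i$ defined in $(H_4)$. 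Since $(H_4)$ gives $\gamma_i>0$, set $\gamma=\min_i\gamma_i>0$; then $V(t)+\gamma\int_{T^{**}}^{t}\sum_i u_i^2(s)\,\Delta s\leq V(T^{**})$, so $\sum_i\int_{T^{**}}^{\infty}u_i^2(s)\,\Delta s<\infty$ and $V(t)$ is bounded, whence each $u_i$ is bounded and (from the equation for $u_i^\Delta$) $u_i^\Delta$ is bounded. A standard Barbalat-type argument on time scales then yields $u_i(t)=x_i(t)-y_i(t)\to 0$ as $t\to\infty$ for every $i$, which is the assertion.

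The main obstacle is bookkeeping rather than conceptual: one must choose the coefficients of the auxiliary single- and double-integral terms in $V$ so that, after differentiation, every term that is not of the form $-\gamma_i u_i^2(t)$ either cancels or is dominated, and then verify that the surviving negative coefficient is literally the messy expression $\gamma_i$ in $(H_4)$ — in particular getting the combinations $2\tau^{+}-\tau^{-}$ and $2\delta^{+}-\delta^{-}$ right requires care, since they arise from pairing the length of the delay interval (bounded by $\tau^{+}$) with the shift of the integration range (which can move by as much as $\tau^{+}-\tau^{-}$), and not from a single crude bound. A secondary subtlety is the consistent use of $u^{\sigma}$ versus $u$ and of $e_p^{\sigma}$ in the delta-derivative computations; the factors $2\bar\mu b_i^u e^{x_i^M}+1$ in $\gamma_i$ are the footprint of replacing $u_i^{\sigma}(t)$ by $u_i(t)+\mu(t)u_i^{\Delta}(t)$ and bounding $\mu(t)\leq\bar\mu$, so these must be tracked rather than discarded. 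Once the algebra is organized, the convergence conclusion is routine.
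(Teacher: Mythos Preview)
Your overall strategy---subtract the two solutions, linearize via the mean value theorem, build a Lyapunov functional with a ``main'' piece plus double-integral correction terms to absorb the delay remainders, then integrate to get $\int^\infty |u_i|<\infty$ and conclude---is exactly what the paper does. You also correctly identify the origin of the factors $1-\tau^{\Delta}$, $1-\delta^{\Delta}$ and $2\bar\mu b_i^u e^{x_i^M}+1$.

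However, your concrete choice $V_{i1}(t)=u_i^{2}(t)$ is the wrong one for reproducing the $\gamma_i$ in $(H_4)$; the paper takes $V_{i1}(t)=|u_i(t)|$ and estimates $D^{+}|u_i|^{\Delta}\le \mathrm{sign}(u_i^{\sigma})\,u_i^{\Delta}$. Your reasoning that ``the squared form meshes better with the quadratic coefficients'' is a misreading: the factors $(b_i^{u}e^{x_i^{M}})^{2}$ and $(c_{ij}^{u}e^{x_j^{M}})^{2}$ in $\gamma_i$ come not from a quadratic Lyapunov function but from \emph{iterating} once the pointwise bound $|u_i^{\Delta}(s)|\le b_i^{u}e^{x_i^{M}}|u_i(s-\tau_i(s))|+\sum_j\frac{c_{ij}^{u}e^{x_j^{M}}}{(d_{ij}+e^{x_j^{m}})^{2}}|u_j(s-\delta_j(s))|$ back into the term $b_i^{u}e^{x_i^{M}}\int_{t-\tau_i(t)}^{t}|u_i^{\Delta}(s)|\,\Delta s$ (and the analogous $\delta$-integral). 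With $u_i^{2}$ you would instead face cross terms $u_i(t)\,u_i(t-\tau_i(t))$ that need Young-type splitting, yielding different constants than those in $(H_4)$. Relatedly, the paper's auxiliary pieces are of the fixed-window form $\int_{-2\tau^{+}}^{-\tau^{-}}\int_{s+t}^{t}|u_i(r)|\,\Delta r\,\Delta s$ (and similarly with limits $[-\tau^{+}-\delta^{+},-\delta^{-}]$, $[-\tau^{+}-\delta^{+},-\tau^{-}]$, $[-2\delta^{+},-\delta^{-}]$), whose $\Delta$-derivative directly produces the lengths $2\tau^{+}-\tau^{-}$, $\tau^{+}+\delta^{+}-\delta^{-}$, etc.\ that you flagged; your variable-limit version $\int_{t-\tau_i(t)}^{t}\int_s^t(\cdots)$ would not give those exact combinations. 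If you switch to $|u_i|$ and the fixed-window double integrals, the rest of your outline goes through verbatim.
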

\begin{proof}
Assume that $x(t)=(x_{1}(t), x_{2}(t),\ldots,x_{n}(t))$ and $y(t)=(y_{1}(t), y_{2}(t),\ldots,y_{n}(t))$ are any solutions of system $\eqref{e13}$  with the initial values $\varphi(s)=(\varphi_{1}(s),\varphi_2(s),\ldots,\varphi_n(s))$ and $\psi(s)=(\psi_{1}(s),\psi_2(s),\ldots,\psi_n(s))$, respectively. In view of system $\eqref{e13}$, we have{\setlength\arraycolsep{2pt}\begin{eqnarray*}
\left\{
\begin{array}{lll}
x_{i}^{\Delta}(t)=a_{i}(t)-b_{i}(t)e^{x_{i}(t-\tau_{i}(t))}+\sum_{j=1,j\neq i}^{n}c_{ij}(t)\displaystyle\frac{e^{x_{j}(t-\delta_{j}(t))}}{d_{ij}+e^{x_{j}(t-\delta_{j}(t))}},\,\,i=1,2,\ldots, n,\\
y_{i}^{\Delta}(t)=a_{i}(t)-b_{i}(t)e^{y_{i}(t-\tau_{i}(t))}+\sum_{j=1,j\neq i}^{n}c_{ij}(t)\displaystyle\frac{e^{y_{j}(t-\delta_{j}(t))}}{d_{ij}+e^{y_{j}(t-\delta_{j}(t))}},\,\,i=1,2,\ldots, n,\\
\end{array}
\right.
\end{eqnarray*}}
then
{\setlength\arraycolsep{2pt}\begin{eqnarray}\label{e41}
(x_{i}(t)-y_{i}(t))^{\Delta}&=& -b_{i}(t)\big[e^{x_{i}(t-\tau_{i}(t))}-e^{y_{i}(t-\tau_{i}(t))}\big]\nonumber\\
&&+\sum_{j=1,j\neq i}^{n}c_{ij}(t)\bigg(\displaystyle\frac{e^{x_{j}(t-\delta_{j}(t))}}{d_{ij}+e^{x_{j}(t-\delta_{j}(t))}}-\frac{e^{y_{j}
(t-\delta_{j}(t))}}{d_{ij}+e^{y_{j}(t-\delta_{j}(t))}}\bigg),i=1,2,\ldots, n.\quad
\end{eqnarray}}Using the mean value theorem, we get
{\setlength\arraycolsep{2pt}\begin{eqnarray}\label{e42}
\left\{
\begin{array}{lll}
e^{x_{i}(t-\tau_{i}(t))}-e^{y_{i}(t-\tau_{i}(t))}=e^{\xi_{i}(t)}(x_{i}(t-\tau_{i}(t))-y_{i}(t-\tau_{i}(t))),\\
\displaystyle\frac{e^{x_{j}(t-\delta_{j}(t))}}{d_{ij}+e^{x_{j}(t-\delta_{j}(t))}}-\frac{e^{y_{j}
(t-\delta_{j}(t))}}{d_{ij}+e^{y_{j}(t-\delta_{j}(t))}}=\frac{e^{\eta_{j}(t)}}
{(d_{ij}+e^{\eta_{j}(t)})^{2}}(x_{j}(t-\delta_{j}(t))-y_{j}(t-\delta_{j}(t))),
\end{array}
\right.
\end{eqnarray}}
where $\xi_{i}(t)$ lies between $x_{i}(t-\tau_{i}(t))$ and $y_{i}(t-\tau_{i}(t))$, $\eta_{j}(t)$ lies between $x_{j}(t-\delta_{j}(t))$ and $y_{j}(t-\delta_{j}(t))$, $i, j=1, 2,\ldots n,\,\, i\neq j$. Then, by use of $\eqref{e42}$, $\eqref{e41}$ can be written as
{\setlength\arraycolsep{2pt}\begin{eqnarray*}
(x_{i}(t)-y_{i}(t))^{\Delta}&=& -b_{i}(t)e^{\xi_{i}(t)}(x_{i}(t-\tau_{i})-y_{i}(t-\tau_{i}))\nonumber\\
&&+\sum_{j=1,j\neq i}^{n}c_{ij}(t)\frac{e^{\eta_{j}(t)}}
{(d_{ij}+e^{\eta_{j}(t)})^{2}}(x_{j}(t-\delta_{j})-y_{j}(t-\delta_{j})),\,\,i=1,2,\ldots, n.
\end{eqnarray*}}
Let $u_{i}(t)=x_{i}(t)-y_{i}(t)$, then
\begin{eqnarray*}
u_{i}^{\Delta}(t)= -b_{i}(t)e^{\xi_{i}(t)}u_{i}(t-\tau_{i}(t))+{\displaystyle\sum_{j=1,j\neq i}^{n}c_{ij}(t)\frac{e^{\eta_{j}(t)}}
{(d_{ij}+e^{\eta_{j}(t)})^{2}}u_{j}(t-\delta_{j}(t))},\,\,i=1, 2,\ldots n.
\end{eqnarray*}

Consider a Lyapunov function
\[V(t)=\sum_{i=1}^{n}V_{i}(t),\]
\[V_{i}(t)=V_{i1}(t)+V_{i2}(t)+V_{i3}(t)+V_{i4}(t)+V_{i5}(t),\]
where
\begin{eqnarray*}
&&V_{i1}(t)=|u_{i}(t)|,\\
&&V_{i2}(t)=\frac{(b_{i}^{u}e^{x_{i}^{M}})^{2}[2\bar{\mu} b_{i}^{u}e^{x_{i}^{M}}+1]}{1-\tau^{\Delta}}\int_{-2\tau^{+}}^{-\tau^{-}}\int_{s+t}^{t}|u_{i}(r)|\Delta r\Delta s,\\
&&V_{i3}(t)={\displaystyle\sum_{j=1,j\neq i}^{n}\frac{c_{ij}^{u}e^{x_{j}^{M}}b_{i}^{u}e^{x_{i}^{M}}[2\bar{\mu} b_{i}^{u}e^{x_{i}^{M}}+1]}
{(d_{ij}+e^{x_{j}^{m}})^{2}(1-\delta^{\Delta})}\int_{-\tau^{+}-\delta^{+}}^{-\delta^{-}}\int_{s+t}^{t}|u_{j}(r)}|\Delta r\Delta s,\\
&&V_{i4}(t)={\displaystyle\sum_{j=1,j\neq i}^{n}\frac{c_{ij}^{u}e^{2x_{j}^{M}}b_{j}^{u}(2\bar{\mu} b_{i}^{u}e^{x_{i}^{M}}+1)}{(d_{ij}+e^{x_{j}^{m}})^{2}(1-\tau^{\Delta})}\int_{-\tau^{+}-\delta^{+}}^{-\tau^{-}}\int_{s+t}^{t}|u_{j}(r)|\Delta r\Delta s},\\
&&V_{i5}(t)={\displaystyle\sum_{j=1,j\neq i}^{n}\frac{(c_{ij}^{u}e^{x_{j}^{M}})^{2}(2\bar{\mu} b_{i}^{u}e^{x_{i}^{M}}+1)}{(d_{ij}+e^{x_{j}^{m}})^{4}(1-\delta^{\Delta})}\int_{-2\delta^{+}}^{-\delta^{-}}\int_{s+t}^{t}|u_{i}(r)|\Delta r\Delta s},
\end{eqnarray*}
then
{\setlength\arraycolsep{2pt}
\begin{eqnarray}\label{e43}
&&D^{+}V_{i1}^{\Delta}(t)\nonumber\\
&\leq&
\mathrm{sign}(u_{i}^{\sigma}(t))u_{i}^{\Delta}(t)\nonumber\\
&=&
\mathrm{sign}(u_{i}^{\sigma}(t))\bigg[-b_{i}(t)e^{\xi_{i}(t)}u_{i}(t-\tau_{i}(t))+{\displaystyle\sum_{j=1,j\neq i}^{n}c_{ij}(t)\frac{e^{\eta_{j}(t)}}{(d_{ij}+e^{\eta_{j}(t)})^{2}}u_{j}(t-\delta_{j}(t))}\bigg]\nonumber\\
&=&
\bigg\{-\mathrm{sign}(u_{i}^{\sigma}(t))b_{i}(t)e^{\xi_{i}(t)}[u_{i}(t)+u_{i}(t-\tau_{i}(t))-u_{i}(t)]\nonumber\\
&&+{\displaystyle\sum_{j=1,j\neq i}^{n}c_{ij}(t)\frac{e^{\eta_{j}(t)}}
{(d_{ij}+e^{\eta_{j}(t)})^{2}}\mathrm{sign}(u_{i}^{\sigma}(t))[u_{j}(t)+u_{j}(t-\delta_{j}(t))-u_{j}(t)]}\bigg\}\nonumber\\
&\leq&
\bigg\{-b_{i}(t)e^{\xi_{i}(t)}\mathrm{sign}(u_{i}^{\sigma}(t))u_{i}(t)+b_{i}^{u}e^{x_{i}^{M}}\int_{t-\tau_{i}(t)}^{t}|u_{i}^{\Delta}(s)|\Delta s+{\displaystyle\sum_{j=1,j\neq i}^{n}c_{ij}^{u}\frac{e^{x_{j}^{M}}}{(d_{ij}+e^{x_{j}^{m}})^{2}}|u_{j}(t)|}\nonumber\\
&&+{\displaystyle\sum_{j=1,j\neq i}^{n}c_{ij}^{u}\frac{e^{x_{j}^{M}}}
{(d_{ij}+e^{x_{j}^{m}})^{2}}\int_{t-\delta_{j}(t)}^{t}|u_{j}^{\Delta}(s)|\Delta s}\bigg\}\nonumber\\
&\leq&
\bigg\{-b_{i}(t)e^{\xi_{i}(t)}\mathrm{sign}(u_{i}^{\sigma}(t))[u_{i}^{\sigma}(t)-\mu(t)u_{i}^{\Delta}(t)]+b_{i}^{u}e^{x_{i}^{M}}
\int_{t-\tau_{i}(t)}^{t}|u_{i}^{\Delta}(s)|\Delta s\nonumber\\
&&+{\displaystyle\sum_{j=1,j\neq i}^{n}c_{ij}^{u}\frac{e^{x_{j}^{M}}}{(d_{ij}+e^{x_{j}^{m}})^{2}}|u_{j}(t)|}+{\displaystyle\sum_{j=1,j\neq i}^{n}c_{ij}^{u}\frac{e^{x_{j}^{M}}}
{(d_{ij}+e^{x_{j}^{m}})^{2}}\int_{t-\delta_{j}(t)}^{t}|u_{j}^{\Delta}(s)|\Delta s}\bigg\}\nonumber\\
&\leq&
\bigg\{-b_{i}^{l}e^{x_{i}^{m}}|u_{i}(t)+\mu(t)u_{i}^{\Delta}(t)|+\bar{\mu} b_{i}^{u}e^{x_{i}^{M}}|u_{i}^{\Delta}(t)|+b_{i}^{u}e^{x_{i}^{M}}\int_{t-\tau_{i}(t)}^{t}|u_{i}^{\Delta}(s)|\Delta s\nonumber\\
&&+{\displaystyle\sum_{j=1,j\neq i}^{n}c_{ij}^{u}\frac{e^{x_{j}^{M}}}{(d_{ij}+e^{x_{j}^{m}})^{2}}|u_{j}(t)|}+{\displaystyle\sum_{j=1,j\neq i}^{n}c_{ij}^{u}\frac{e^{x_{j}^{M}}}
{(d_{ij}+e^{x_{j}^{m}})^{2}}\int_{t-\delta_{j}(t)}^{t}|u_{j}^{\Delta}(s)|\Delta s}\bigg\}\nonumber\\
&\leq&
\bigg\{-b_{i}^{l}e^{x_{i}^{m}}|u_{i}(t)|+2\bar{\mu} b_{i}^{u}e^{x_{i}^{M}}|u_{i}^{\Delta}(t)|+b_{i}^{u}e^{x_{i}^{M}}\int_{t-\tau_{i}(t)}^{t}|u_{i}^{\Delta}(s)|\Delta s\nonumber\\
&&+{\displaystyle\sum_{j=1,j\neq i}^{n}c_{ij}^{u}\frac{e^{x_{j}^{M}}}{(d_{ij}+e^{x_{j}^{m}})^{2}}|u_{j}(t)|}+{\displaystyle\sum_{j=1,j\neq i}^{n}c_{ij}^{u}\frac{e^{x_{j}^{M}}}
{(d_{ij}+e^{x_{j}^{m}})^{2}}\int_{t-\delta_{j}(t)}^{t}|u_{j}^{\Delta}(s)|\Delta s}\bigg\}\nonumber\\
&\leq&
\bigg\{-b_{i}^{l}e^{x_{i}^{m}}|u_{i}(t)|+2\bar{\mu} b_{i}^{u}e^{x_{i}^{M}}\bigg|-b_{i}(t)e^{\xi_{i}(t)}u_{i}(t-\tau_{i}(t))\nonumber\\
&&+{\displaystyle\sum_{j=1,j\neq i}^{n}c_{ij}(t)\frac{e^{\eta_{j}(t)}}{(d_{ij}+e^{\eta_{j}(t)})^{2}}u_{j}(t-\delta_{j}(t))}\bigg|+b_{i}^{u}e^{x_{i}^{M}}\int_{t-\tau_{i}(t)}^{t}|u_{i}^{\Delta}(s)|\Delta s\nonumber\\
&&+{\displaystyle\sum_{j=1,j\neq i}^{n}c_{ij}^{u}\frac{e^{x_{j}^{M}}}{(d_{ij}+e^{x_{j}^{m}})^{2}}|u_{j}(t)|}+{\displaystyle\sum_{j=1,j\neq i}^{n}c_{ij}^{u}\frac{e^{x_{j}^{M}}}
{(d_{ij}+e^{x_{j}^{m}})^{2}}\int_{t-\delta_{j}(t)}^{t}|u_{j}^{\Delta}(s)|\Delta s}\bigg\}\nonumber\\
&\leq&
\bigg\{-b_{i}^{l}e^{x_{i}^{m}}|u_{i}(t)|+2\bar{\mu}(b_{i}^{u}e^{x_{i}^{M}})^{2}|u_{i}(t-\tau_{i}(t))|+{\displaystyle\sum_{j=1,j\neq i}^{n}\frac{2\bar{\mu} b_{i}^{u}c_{ij}^{u}e^{x_{i}^{M}}e^{x_{j}^{M}}}{(d_{ij}+e^{x_{j}^{m}})^{2}}|u_{j}(t-\delta_{j}(t))}|\nonumber\\
&&+b_{i}^{u}e^{x_{i}^{M}}\int_{t-\tau_{i}(t)}^{t}|u_{i}^{\Delta}(s)|\Delta s+{\displaystyle\sum_{j=1,j\neq i}^{n}c_{ij}^{u}\frac{e^{x_{j}^{M}}}{(d_{ij}+e^{x_{j}^{m}})^{2}}|u_{j}(t)|}\nonumber\\
&&+{\displaystyle\sum_{j=1,j\neq i}^{n}c_{ij}^{u}\frac{e^{x_{j}^{M}}}
{(d_{ij}+e^{x_{j}^{m}})^{2}}\int_{t-\delta_{j}(t)}^{t}|u_{j}^{\Delta}(s)|\Delta s}\bigg\}\nonumber\\
&\leq&
\bigg\{-b_{i}^{l}e^{x_{i}^{m}}|u_{i}(t)|+2\bar{\mu}(b_{i}^{u}e^{x_{i}^{M}})^{2}|u_{i}(t)|+2\bar{\mu}(b_{i}^{u}e^{x_{i}^{M}})^{2}\int_{t-\tau_{i}(t)}^{t}
|u_{i}^{\Delta}(s)|\Delta s+b_{i}^{u}e^{x_{i}^{M}}\nonumber\\&&\times\int_{t-\tau_{i}(t)}^{t}|u_{i}^{\Delta}(s)|\Delta s+{\displaystyle\sum_{j=1,j\neq i}^{n}\frac{2\bar{\mu} b_{i}^{u}c_{ij}^{u}e^{x_{i}^{M}}e^{x_{j}^{M}}}{(d_{ij}+e^{x_{j}^{m}})^{2}}|u_{j}(t)|}+{\displaystyle\sum_{j=1,j\neq i}^{n}c_{ij}^{u}\frac{e^{x_{j}^{M}}}{(d_{ij}+e^{x_{j}^{m}})^{2}}|u_{j}(t)|}\nonumber\\
&&+{\displaystyle\sum_{j=1,j\neq i}^{n}\frac{2\bar{\mu} b_{i}^{u}c_{ij}^{u}e^{x_{i}^{M}}e^{x_{j}^{M}}}{(d_{ij}+e^{x_{j}^{m}})^{2}}\int_{t-\delta_{j}(t)}^{t}|u_{j}^{\Delta}(s)|\Delta s}+{\displaystyle\sum_{j=1,j\neq i}^{n}c_{ij}^{u}\frac{e^{x_{j}^{M}}}
{(d_{ij}+e^{x_{j}^{m}})^{2}}\int_{t-\delta_{j}(t)}^{t}|u_{j}^{\Delta}(s)|\Delta s}\bigg\}\nonumber\\
&\leq&
\bigg\{-b_{i}^{l}e^{x_{i}^{m}}|u_{i}(t)|+2\bar{\mu}(b_{i}^{u}e^{x_{i}^{M}})^{2}|u_{i}(t)|+[2\bar{\mu}(b_{i}^{u}e^{x_{i}^{M}})^{2}+b_{i}^{u}e^{x_{i}^{M}}]
\int_{t-\tau_{i}(t)}^{t}|u_{i}^{\Delta}(s)|\Delta s\nonumber\\
&&+{\displaystyle\sum_{j=1,j\neq i}^{n}\frac{c_{ij}^{u}e^{x_{j}^{M}}(2\bar{\mu} b_{i}^{u}e^{x_{i}^{M}}+1)}{(d_{ij}+e^{x_{j}^{m}})^{2}}|u_{j}(t)|}+{\displaystyle\sum_{j=1,j\neq i}^{n}\frac{c_{ij}^{u}e^{x_{j}^{M}}(2\bar{\mu} b_{i}^{u}e^{x_{i}^{M}}+1)}{(d_{ij}+e^{x_{j}^{m}})^{2}}\int_{t-\delta_{j}(t)}^{t}|u_{j}^{\Delta}(s)|\Delta s}\nonumber\\
&\leq&
\bigg\{-b_{i}^{l}e^{x_{i}^{m}}|u_{i}(t)|+2\bar{\mu}(b_{i}^{u}e^{x_{i}^{M}})^{2}|u_{i}(t)|+b_{i}^{u}e^{x_{i}^{M}}[2\bar{\mu} b_{i}^{u}e^{x_{i}^{M}}+1]
\int_{t-\tau_{i}(t)}^{t}\bigg|-b_{i}(s)e^{\xi_{i}(s)}u_{i}(s-\tau_{i}(s))\nonumber\\
&&+{\displaystyle\sum_{j=1,j\neq i}^{n}\frac{c_{ij}(s)e^{\eta_{j}(s)}}
{(d_{ij}+e^{\eta_{j}(s)})^{2}}u_{j}(s-\delta_{j}(s))}\bigg|\Delta s+{\displaystyle\sum_{j=1,j\neq i}^{n}\frac{c_{ij}^{u}e^{x_{j}^{M}}(2\bar{\mu} b_{i}^{u}e^{x_{i}^{M}}+1)}{(d_{ij}+e^{x_{j}^{m}})^{2}}|u_{j}(t)|}\nonumber\\
&&+{\displaystyle\sum_{j=1,j\neq i}^{n}\frac{c_{ij}^{u}e^{x_{j}^{M}}(2\bar{\mu} b_{i}^{u}e^{x_{i}^{M}}+1)}{(d_{ij}+e^{x_{j}^{m}})^{2}}\int_{t-\delta_{j}(t)}^{t}\bigg|-b_{j}(s)e^{\xi_{j}(s)}u_{j}(s-\tau_{j}(t))}\nonumber\\
&&+{\displaystyle \sum_{i=1,j\neq i}^{n}\frac{c_{ji}(s)e^{\eta_{i}(s)}}{(d_{ji}+e^{\eta_{i}(s)})^{2}}u_{i}(s-\delta_{i}(t))}\bigg|\Delta s\nonumber\\
&\leq&
\bigg\{-b_{i}^{l}e^{x_{i}^{m}}|u_{i}(t)|+2\bar{\mu}(b_{i}^{u}e^{x_{i}^{M}})^{2}|u_{i}(t)|+(b_{i}^{u}e^{x_{i}^{M}})^{2}[2\bar{\mu} b_{i}^{u}e^{x_{i}^{M}}+1]
\int_{t-\tau_{i}(t)}^{t}|u_{i}(s-\tau_{i}(s))|\Delta s\nonumber\\
&&+{\displaystyle\sum_{j=1,j\neq i}^{n}\frac{c_{ij}^{u}e^{x_{j}^{M}}b_{i}^{u}e^{x_{i}^{M}}[2\bar{\mu} b_{i}^{u}e^{x_{i}^{M}}+1]}
{(d_{ij}+e^{x_{j}^{m}})^{2}}\int_{t-\tau_{i}(t)}^{t}|u_{j}(s-\delta_{j}(s))}|\Delta s+{\displaystyle\sum_{j=1,j\neq i}^{n}\frac{c_{ij}^{u}e^{x_{j}^{M}}(2\bar{\mu} b_{i}^{u}e^{x_{i}^{M}}+1)}{(d_{ij}+e^{x_{j}^{m}})^{2}}}\nonumber\\
&&\times|u_{j}(t)|+{\displaystyle\sum_{j=1,j\neq i}^{n}\frac{c_{ij}^{u}e^{2x_{j}^{M}}b_{j}^{u}(2\bar{\mu} b_{i}^{u}e^{x_{i}^{M}}+1)}{(d_{ij}+e^{x_{j}^{m}})^{2}}\int_{t-\delta_{j}(t)}^{t}|u_{j}(s-\tau_{j}(t))|\Delta s}\nonumber\\
&&+{\displaystyle \frac{(c_{ij}^{u}e^{x_{j}^{M}})^{2}(2\bar{\mu} b_{i}^{u}e^{x_{i}^{M}}+1)}{(d_{ij}+e^{x_{j}^{m}})^{4}}\int_{t-\delta_{j}(t)}^{t}
|u_{i}(s-\delta_{i}(t))}|\Delta s\nonumber\\
&\leq&
\bigg\{-b_{i}^{l}e^{x_{i}^{m}}|u_{i}(t)|+2\bar{\mu}(b_{i}^{u}e^{x_{i}^{M}})^{2}|u_{i}(t)|+\frac{(b_{i}^{u}e^{x_{i}^{M}})^{2}[2\bar{\mu} b_{i}^{u}e^{x_{i}^{M}}+1]}{1-\tau^{\Delta}}\int_{-2\tau^{+}}^{-\tau^{-}}|u_{i}(s+t)|\Delta s\nonumber\\
&&+{\displaystyle\sum_{j=1,j\neq i}^{n}\frac{c_{ij}^{u}e^{x_{j}^{M}}b_{i}^{u}e^{x_{i}^{M}}[2\bar{\mu} b_{i}^{u}e^{x_{i}^{M}}+1]}
{(d_{ij}+e^{x_{j}^{m}})^{2}(1-\delta^{\Delta})}\int_{-\tau^{+}-\delta^{+}}^{-\delta^{-}}|u_{j}(s+t)}|\Delta s\nonumber\\
&&+{\displaystyle\sum_{j=1,j\neq i}^{n}\frac{c_{ij}^{u}e^{x_{j}^{M}}(2\bar{\mu} b_{i}^{u}e^{x_{i}^{M}}+1)}{(d_{ij}+e^{x_{j}^{m}})^{2}}|u_{j}(t)|}+{\displaystyle\sum_{j=1,j\neq i}^{n}\frac{c_{ij}^{u}e^{2x_{j}^{M}}b_{j}^{u}(2\bar{\mu} b_{i}^{u}e^{x_{i}^{M}}+1)}{(d_{ij}+e^{x_{j}^{m}})^{2}(1-\tau^{\Delta})}\int_{-\tau^{+}-\delta^{+}}^{-\tau^{-}}|u_{j}(s+t)|\Delta s}\nonumber\\
&&+{\displaystyle \sum_{j=1,j\neq i}^{n}\frac{(c_{ij}^{u}e^{x_{j}^{M}})^{2}(2\bar{\mu} b_{i}^{u}e^{x_{i}^{M}}+1)}{(d_{ij}+e^{x_{j}^{m}})^{4}(1-\delta^{\Delta})}\int_{-2\delta^{+}}^{-\delta^{-}}|u_{i}(s+t)|\Delta s},
\end{eqnarray}}
{\setlength\arraycolsep{2pt}
\begin{eqnarray}
D^{+}V_{i2}^{\Delta}(t)
&=&
\frac{(b_{i}^{u}e^{x_{i}^{M}})^{2}[2\bar{\mu} b_{i}^{u}e^{x_{i}^{M}}+1]}{1-\tau^{\Delta}}\int_{-2\tau^{+}}^{-\tau^{-}}[|u_{i}(t)|-|u_{i}(t+s)|]\Delta s\nonumber\\
&=&
\frac{(b_{i}^{u}e^{x_{i}^{M}})^{2}[2\bar{\mu} b_{i}^{u}e^{x_{i}^{M}}+1](2\tau^{+}-\tau^{-})}{1-\tau^{\Delta}}|u_{i}(t)|\nonumber\\
&&-\frac{(b_{i}^{u}e^{x_{i}^{M}})^{2}[2\bar{\mu} b_{i}^{u}e^{x_{i}^{M}}+1]}{1-\tau^{\Delta}}\int_{-2\tau^{+}}^{-\tau^{-}}|u_{i}(t+s)|\Delta s,\label{e44}\\
D^{+}V_{i3}^{\Delta}(t)
&=&
{\displaystyle\sum_{j=1,j\neq i}^{n}\frac{c_{ij}^{u}e^{x_{j}^{M}}b_{i}^{u}e^{x_{i}^{M}}[2\bar{\mu} b_{i}^{u}e^{x_{i}^{M}}+1]}
{(d_{ij}+e^{x_{j}^{m}})^{2}(1-\delta^{\Delta})}\int_{-\tau^{+}-\delta^{+}}^{-\delta^{-}}[|u_{j}(t)|-|u_{j}(t+s)|]\Delta s}\nonumber\\
&=&
{\displaystyle\sum_{j=1,j\neq i}^{n}\frac{c_{ij}^{u}e^{x_{j}^{M}}b_{i}^{u}e^{x_{i}^{M}}[2\bar{\mu} b_{i}^{u}e^{x_{i}^{M}}+1](\tau^{+}+\delta^{+}-\delta^{-})}
{(d_{ij}+e^{x_{j}^{m}})^{2}(1-\delta^{\Delta})}|u_{j}(t)|}\nonumber\\
&&-{\displaystyle\sum_{j=1,j\neq i}^{n}\frac{c_{ij}^{u}e^{x_{j}^{M}}b_{i}^{u}e^{x_{i}^{M}}[2\bar{\mu} b_{i}^{u}e^{x_{i}^{M}}+1]}{(d_{ij}+e^{x_{j}^{m}})^{2}(1-\delta^{\Delta})}\int_{-\tau^{+}-\delta^{+}}^{-\delta^{-}}|u_{j}(t+s)|\Delta s},\label{e45}\\
D^{+}V_{i4}^{\Delta}(t)
&=&
{\displaystyle\sum_{j=1,j\neq i}^{n}\frac{c_{ij}^{u}e^{2x_{j}^{M}}b_{j}^{u}(2\bar{\mu} b_{i}^{u}e^{x_{i}^{M}}+1)}{(d_{ij}+e^{x_{j}^{m}})^{2}(1-\tau^{\Delta})}\int_{-\tau^{+}-\delta^{+}}^{-\tau^{-}}
[|u_{j}(t)|-|u_{j}(t+s)|]\Delta s}\nonumber\\
&=&
{\displaystyle\sum_{j=1,j\neq i}^{n}\frac{c_{ij}^{u}e^{2x_{j}^{M}}b_{j}^{u}(2\bar{\mu} b_{i}^{u}e^{x_{i}^{M}}+1)(\tau^{+}+\delta^{+}-\tau^{-})}{(d_{ij}+e^{x_{j}^{m}})^{2}(1-\tau^{\Delta})}|u_{j}(t)|}\nonumber\\
&&-{\displaystyle\sum_{j=1,j\neq i}^{n}\frac{c_{ij}^{u}e^{2x_{j}^{M}}b_{j}^{u}(2\bar{\mu} b_{i}^{u}e^{x_{i}^{M}}+1)}{(d_{ij}+e^{x_{j}^{m}})^{2}(1-\tau^{\Delta})}|u_{j}(t+s)|\Delta s},\label{e46}\\
D^{+}V_{i5}^{\Delta}(t)
&=&
{\displaystyle\sum_{j=1,j\neq i}^{n}\frac{(c_{ij}^{u}e^{x_{j}^{M}})^{2}(2\bar{\mu} b_{i}^{u}e^{x_{i}^{M}}+1)}{(d_{ij}+e^{x_{j}^{m}})^{4}(1-\delta^{\Delta})}\int_{-2\delta^{+}}^{-\delta^{-}}[|u_{i}(t)|-|u_{i}(t+s)|]\Delta s}\nonumber\\
&=&
{\displaystyle\sum_{j=1,j\neq i}^{n}\frac{(c_{ij}^{u}e^{x_{j}^{M}})^{2}(2\bar{\mu} b_{i}^{u}e^{x_{i}^{M}}+1)(2\delta^{+}-\delta^{-})}{(d_{ij}+e^{x_{j}^{m}})^{4}(1-\delta^{\Delta})}|u_{i}(t)|}\nonumber\\
&&-{\displaystyle\sum_{j=1,j\neq i}^{n}\frac{(c_{ij}^{u}e^{x_{j}^{M}})^{2}(2\bar{\mu} b_{i}^{u}e^{x_{i}^{M}}+1)}{(d_{ij}+e^{x_{j}^{m}})^{4}(1-\delta^{\Delta})}\int_{-\delta_{j}-\delta_{i}}^{-\delta_{i}}|u_{i}(t+s)|\Delta s}\bigg\}.\label{e47}
\end{eqnarray}}
In view of $\eqref{e43}-\eqref{e47}$, we can obtain
{\setlength\arraycolsep{2pt}\begin{eqnarray}\label{e48}
&&D^{+}V_{i}^{\Delta}(t)\nonumber\\
&=&
D^{+}V_{i1}^{\Delta}(t)+D^{+}V_{i2}^{\Delta}(t)+D^{+}V_{i3}^{\Delta}(t)+D^{+}V_{i4}^{\Delta}(t)+D^{+}V_{i5}^{\Delta}(t)\nonumber\\
&\leq&
\bigg\{-b_{i}^{l}e^{x_{i}^{m}}|u_{i}(t)|+2\bar{\mu}(b_{i}^{u}e^{x_{i}^{M}})^{2}|u_{i}(t)|+{\displaystyle\sum_{j=1,j\neq i}^{n}\frac{c_{ij}^{u}e^{x_{j}^{M}}(2\bar{\mu} b_{i}^{u}e^{x_{i}^{M}}+1)}{(d_{ij}+e^{x_{j}^{m}})^{2}}|u_{j}(t)|}\nonumber\\
&&+\frac{(b_{i}^{u}e^{x_{i}^{M}})^{2}[2\bar{\mu} b_{i}^{u}e^{x_{i}^{M}}+1](2\tau^{+}-\tau^{-})}{1-\tau^{\Delta}}|u_{i}(t)|\nonumber\\
&&+{\displaystyle\sum_{j=1,j\neq i}^{n}\frac{c_{ij}^{u}e^{x_{j}^{M}}b_{i}^{u}e^{x_{i}^{M}}[2\bar{\mu} b_{i}^{u}e^{x_{i}^{M}}+1](\tau^{+}+\delta^{+}-\delta^{-})}
{(d_{ij}+e^{x_{j}^{m}})^{2}(1-\delta^{\Delta})}|u_{j}(t)|}\nonumber\\
&&+{\displaystyle\sum_{j=1,j\neq i}^{n}\frac{c_{ij}^{u}e^{2x_{j}^{M}}b_{j}^{u}(2\bar{\mu} b_{i}^{u}e^{x_{i}^{M}}+1)(\tau^{+}+\delta^{+}-\tau^{-})}{(d_{ij}+e^{x_{j}^{m}})^{2}(1-\tau^{\Delta})}|u_{j}(t)|}\nonumber\\
&&+{\displaystyle\sum_{j=1,j\neq i}^{n}\frac{(c_{ij}^{u}e^{x_{j}^{M}})^{2}(2\bar{\mu} b_{i}^{u}e^{x_{i}^{M}}+1)(2\delta^{+}-\delta^{-})}{(d_{ij}+e^{x_{j}^{m}})^{4}(1-\delta^{\Delta})}|u_{i}(t)|}\bigg\}\nonumber\\
&=&
\bigg\{-\bigg[b_{i}^{l}e^{x_{i}^{m}}-2\bar{\mu}(b_{i}^{u}e^{x_{i}^{M}})^{2}-\frac{(b_{i}^{u}e^{x_{i}^{M}})^{2}[2\bar{\mu} b_{i}^{u}e^{x_{i}^{M}}+1](2\tau^{+}-\tau^{-})}{1-\tau^{\Delta}}\nonumber\\
&&-{\displaystyle\sum_{j=1,j\neq i}^{n}\frac{(c_{ij}^{u}e^{x_{j}^{M}})^{2}(2\bar{\mu} b_{i}^{u}e^{x_{i}^{M}}+1)(2\delta^{+}-\delta^{-})}{(d_{ij}+e^{x_{j}^{m}})^{4}(1-\delta^{\Delta})}}\bigg]|u_{i}(t)|+\bigg[{\displaystyle\sum_{j=1,j\neq i}^{n}\frac{c_{ij}^{u}e^{x_{j}^{M}}(2\bar{\mu} b_{i}^{u}e^{x_{i}^{M}}+1)}{(d_{ij}+e^{x_{j}^{m}})^{2}}}\nonumber\\
&&+{\displaystyle\sum_{j=1,j\neq i}^{n}\frac{c_{ij}^{u}e^{x_{j}^{M}}b_{i}^{u}e^{x_{i}^{M}}[2\bar{\mu} b_{i}^{u}e^{x_{i}^{M}}+1](\tau^{+}+\delta^{+}-\delta^{-})}
{(d_{ij}+e^{x_{j}^{m}})^{2}(1-\delta^{\Delta})}}\nonumber\\
&&+{\displaystyle\sum_{j=1,j\neq i}^{n}\frac{c_{ij}^{u}e^{2x_{j}^{M}}b_{j}^{u}(2\bar{\mu} b_{i}^{u}e^{x_{i}^{M}}+1)(\tau^{+}+\delta^{+}-\tau^{-})}{(d_{ij}+e^{x_{j}^{m}})^{2}(1-\tau^{\Delta})}}\bigg]|u_{j}(t)|\bigg\}\nonumber\\
&=&
-\bigg\{b_{i}^{l}e^{x_{i}^{m}}-2\bar{\mu}(b_{i}^{u}e^{x_{i}^{M}})^{2}-\frac{(b_{i}^{u}e^{x_{i}^{M}})^{2}[2\bar{\mu} b_{i}^{u}e^{x_{i}^{M}}+1](2\tau^{+}-\tau^{-})}{1-\tau^{\Delta}}\nonumber\\
&&-{\displaystyle\sum_{j=1,j\neq i}^{n}\frac{(c_{ij}^{u}e^{x_{j}^{M}})^{2}(2\bar{\mu} b_{i}^{u}e^{x_{i}^{M}}+1)(2\delta^{+}-\delta^{-})}{(d_{ij}+e^{x_{j}^{m}})^{4}(1-\delta^{\Delta})}}-{\displaystyle\sum_{j=1,j\neq i}^{n}\frac{c_{ji}^{u}e^{x_{i}^{M}}(2\bar{\mu} b_{j}^{u}e^{x_{j}^{M}}+1)}{(d_{ji}+e^{x_{i}^{m}})^{2}}}\nonumber\\
&&-{\displaystyle\sum_{j=1,j\neq i}^{n}\frac{c_{ji}^{u}e^{x_{i}^{M}}b_{j}^{u}e^{x_{j}^{M}}[2\bar{\mu} b_{j}^{u}e^{x_{j}^{M}}+1](\tau^{+}+\delta^{+}-\delta^{-})}
{(d_{ji}+e^{x_{i}^{m}})^{2}(1-\delta^{\Delta})}}\nonumber\\
&&-{\displaystyle\sum_{j=1,j\neq i}^{n}\frac{c_{ji}^{u}e^{2x_{i}^{M}}b_{i}^{u}(2\bar{\mu} b_{j}^{u}e^{x_{j}^{M}}+1)(\tau^{+}+\delta^{+}-\tau^{-})}{(d_{ji}+e^{x_{i}^{m}})^{2}(1-\tau^{\Delta})}}\bigg\}|u_{i}(t)|\nonumber\\
&=&
-\bigg\{b_{i}^{l}e^{x_{i}^{m}}-2\bar{\mu}(b_{i}^{u}e^{x_{i}^{M}})^{2}-\frac{(b_{i}^{u}e^{x_{i}^{M}})^{2}[2\bar{\mu} b_{i}^{u}e^{x_{i}^{M}}+1](2\tau^{+}-\tau^{-})}{1-\tau^{\Delta}}\nonumber\\
&&-{\displaystyle\sum_{j=1,j\neq i}^{n}\frac{(c_{ij}^{u}e^{x_{j}^{M}})^{2}(2\bar{\mu} b_{i}^{u}e^{x_{i}^{M}}+1)(2\delta^{+}-\delta^{-})}{(d_{ij}+e^{x_{j}^{m}})^{4}(1-\delta^{\Delta})}}-{\displaystyle\sum_{j=1,j\neq i}^{n}\frac{c_{ji}^{u}e^{x_{i}^{M}}(2\bar{\mu} b_{j}^{u}e^{x_{j}^{M}}+1)}{(d_{ji}+e^{x_{i}^{m}})^{2}}}\nonumber\\
&&\times\bigg[1+\frac{b_{j}^{u}e^{x_{j}^{M}}(\tau^{+}+\delta^{+}-\delta^{-})}
{1-\delta^{\Delta}}+\frac{e^{x_{i}^{M}}b_{i}^{u}(\tau^{+}+\delta^{+}-\tau^{-})}{1-\tau^{\Delta}}\bigg]\bigg\}|u_{i}(t)|\nonumber\\
&=&-\gamma_{i}|u_{i}(t)|.
\end{eqnarray}}
From $\eqref{e48}$, we get
\[D^{+}V^{\Delta}(t)\leq\sum_{i=1}^{n}-\gamma_{i}|u_{i}(t)|,\,\,t\in\mathbb{T},\]
then
\[D^{+}V^{\Delta}(t)\leq0,\,\,t\in\mathbb{T}\]
and hence
\begin{eqnarray}\label{e49}
V(t)<V(t_{0}),\,\,t\geq t_{0},\,\,t_{0}\in\mathbb{T}.
 \end{eqnarray}
By use of $\eqref{e48}$ and $\eqref{e49}$, we have
\begin{eqnarray*}
\int_{t_{0}}^{t}\sum_{i=1}^{n}\gamma_{i}|u_{i}(s)|\Delta s\leq V(t_{0})-V(t),\,\,t\geq t_{0},\,\,t_{0}\in\mathbb{T},\,\,i=1, 2,\ldots n.
 \end{eqnarray*}
Consequently,
\begin{eqnarray*}
\int_{t_{0}}^{\infty}|u_{i}(s)|\Delta s\leq \infty,\,\,t_{0}\in\mathbb{T}
 \end{eqnarray*}
and $u_{i}(t)=x_{i}(t)-y_{i}(t)\rightarrow0$ for $t\rightarrow\infty$, $i=1, 2,\ldots n$. This completes the proof.
\end{proof}

\section{Almost periodic solutions}
\setcounter{equation}{0}
{\setlength\arraycolsep{2pt}}
 \indent

In this section, we investigate the existence and uniqueness of   almost
periodic solutions of system $\eqref{e13}$ by use of the almost periodic functional hull theory on time scales.

Let $\{s_{p}\}\subset \Pi$ be any  sequence such that $s_{p}\rightarrow+\infty$ as $p\rightarrow+\infty$. According to
Lemma 2.9, taking a subsequence if necessary, we have
\begin{eqnarray*}
a_{i}(t+s_{p})\rightarrow a_{i}^{\ast}(t),\,\,b_{i}(t+s_{p})\rightarrow b_{i}^{\ast}(t),\,\,c_{ij}(t+s_{p})\rightarrow c_{ij}^{\ast}(t),\\
\tau_{i}(t+s_{p})\rightarrow \tau_{i}^{\ast}(t),\,\,\delta_{j}(t+s_{p})\rightarrow \delta_{j}^{\ast}(t),\,\,p\rightarrow+\infty,
\end{eqnarray*}
for $t\in\mathbb{T}$, $i, j= 1, 2,\ldots,n$, $i\neq j$. Then, we get the hull equations of system (1.3) as follows:
{\setlength\arraycolsep{2pt}
\begin{eqnarray}\label{e51}
x_{i}^{\Delta}(t)=a_{i}^{\ast}(t)-b_{i}^{\ast}(t)e^{x_{i}(t-\tau_{i}^{\ast}(t))}+\sum_{j=1,j\neq i}^{n}c_{ij}^{\ast}(t)\displaystyle\frac{e^{x_{j}(t-\delta_{j}^{\ast}(t))}}{d_{ij}+e^{x_{j}(t-\delta_{j}^{\ast}(t))}},\,\,i= 1, 2,\ldots,n.
\end{eqnarray}}

By use of the almost periodic theory on time scales and Lemma \ref{lem29}, it is easy to obtain the following lemma.
\begin{lemma}\label{lem51}
If system \eqref{e13} satisfies $(H_{1})$-$(H_{4})$, then the hull equations \eqref{e51} also satisfies $(H_{1})$-$(H_{4})$.
\end{lemma}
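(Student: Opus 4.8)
The plan is to show that each of the five structural hypotheses $(H_1)$–$(H_4)$ is preserved when the coefficients $a_i, b_i, c_{ij}, \tau_i, \delta_j$ are replaced by their hull limits $a_i^{\ast}, b_i^{\ast}, c_{ij}^{\ast}, \tau_i^{\ast}, \delta_j^{\ast}$ obtained along the translating sequence $\{s_p\}\subset\Pi$ of Lemma~\ref{lem29}. The engine driving everything is the fact that uniform limits of almost periodic functions on an almost periodic time scale are again almost periodic (Lemma~\ref{lem29}), together with the obvious fact that uniform convergence preserves the inequalities $f^l \le f(t) \le f^u$ in the limit, so that each supremum/infimum bound in the original hypotheses transfers to the hull functions (possibly as a non-strict inequality, which must then be upgraded to strict where needed).

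First I would handle $(H_1)$. Almost periodicity of $a_i^{\ast}, b_i^{\ast}, c_{ij}^{\ast}, \tau_i^{\ast}, \delta_j^{\ast}$ is immediate from Lemma~\ref{lem29}. For the sign conditions, uniform convergence gives $(a_i^{\ast})^l \ge a_i^l > 0$ and similarly $(b_i^{\ast})^l \ge b_i^l>0$, $(c_{ij}^{\ast})^l \ge c_{ij}^l>0$; the constants $d_{ij}>1$ are untouched. For the delay structure one checks $(\tau_i^{\ast})^- \ge \tau^- > 0$, $(\delta_j^{\ast})^- \ge \delta^- > 0$, and that $t-\tau_i^{\ast}(t)\in\mathbb{T}$: since $\tau_i(t+s_p)\to\tau_i^{\ast}(t)$ and $s_p\in\Pi$ with $t+s_p-\tau_i(t+s_p)\in\mathbb{T}$, and $\mathbb{T}$ is closed, the point $(t+s_p-\tau_i(t+s_p)) - s_p = t - \tau_i(t+s_p)$ lies in $\mathbb{T}$; passing to the limit and using closedness of $\mathbb{T}$ again yields $t-\tau_i^{\ast}(t)\in\mathbb{T}$ (one uses here the standard fact from the almost periodic time scale theory that $\Pi$ translates map $\mathbb{T}$ onto itself, so translation by $-s_p$ stays in $\mathbb{T}$). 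For $(H_2)$ one argues likewise with the delta derivatives $\tau_i^{\Delta}$, $\delta_j^{\Delta}$: by the almost periodicity of these derivatives they also converge along a common subsequence, $\tau^{\Delta}$ and $\delta^{\Delta}$ transfer as upper bounds, and $1-(\tau^{\ast})^{\Delta} \ge 1-\tau^{\Delta} > 0$, similarly for $\delta$.

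Next, $(H_3)$ and $(H_4)$ are purely algebraic consequences of $(H_1)$–$(H_2)$ once one observes that the auxiliary quantities $x_i^M$, $x_i^m$ and $\gamma_i$ are built only from the constants $a_i^u, a_i^l, b_i^u, b_i^l, c_{ij}^u, c_{ij}^l, d_{ij}, \tau^{\pm}, \delta^{\pm}, \tau^{\Delta}, \delta^{\Delta}, \bar\mu$. Since every one of these constants for the hull system equals its counterpart for the original system — the suprema and infima over $t\in\mathbb{T}$ of an almost periodic function and of any of its uniform translates-limits coincide, because the translation set is relatively dense, so $\sup$ and $\inf$ are attained arbitrarily well under translation — the numbers $x_i^M, x_i^m, \gamma_i$ for the hull equations are literally the same numbers as for system~\eqref{e13}. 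Hence $(H_3)$ (the inequality $a_i^l\exp\{\tau^+\log(1-b_i^u e^{x_i^M}\bar\mu)/\bar\mu\}>b_i^u$ and the two regressivity conditions) and $(H_4)$ ($\gamma_i>0$) hold verbatim for \eqref{e51}. The one point requiring a little care — and the place I expect to spend the most effort — is justifying the equality of the extremal constants: a priori uniform convergence gives only $(a_i^{\ast})^u \le a_i^u$ etc., which would be the wrong direction for an upper bound appearing with a plus sign. The resolution is that almost periodicity is \emph{two-sided} — one can also translate back by $-s_p$ (legitimate since $-s_p\in\Pi$ as $\Pi$ is symmetric), recovering $a_i$ from $a_i^{\ast}$ as a uniform limit, which forces $(a_i^{\ast})^u = a_i^u$ and $(a_i^{\ast})^l = a_i^l$, and similarly for all the other coefficients. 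Once this symmetry observation is in place, the proof is a short verification hypothesis-by-hypothesis, and one concludes by invoking Lemma~\ref{lem29} for the almost periodicity assertions. \qed
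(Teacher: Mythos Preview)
Your proposal is correct and follows the route the paper itself points to: the paper does not give a written proof of this lemma but simply asserts, immediately before the statement, that ``by use of the almost periodic theory on time scales and Lemma~\ref{lem29}, it is easy to obtain the following lemma.'' Your argument fleshes out exactly that sketch---Lemma~\ref{lem29} for almost periodicity of the hull coefficients, uniform convergence for the sign and boundedness conditions, and the symmetry of $\Pi$ to upgrade the one-sided bounds $(a_i^{\ast})^u\le a_i^u$ to equalities so that $(H_3)$ and $(H_4)$ transfer verbatim---so the two approaches coincide, with yours supplying the details the paper omits.
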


\begin{theorem}\label{thm51}
Assume that $(H_{1})$-$(H_{4})$ hold, then system \eqref{e13} exists a unique strictly positive almost periodic solution.
\end{theorem}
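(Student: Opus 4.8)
The plan is to apply Lemma~\ref{lem211}: it suffices to prove that every hull equation \eqref{e51} of system \eqref{e13} admits one, and only one, strictly positive solution in the sense of Definition~\ref{def24}. The first point to record is the leverage provided by Lemma~\ref{lem51}. Since each hull equation \eqref{e51} satisfies $(H_1)$--$(H_4)$ with the very same structural constants $x_i^m,x_i^M,b_i^u,\tau^{\pm},\delta^{\pm},\bar\mu,\dots$ that occur in \eqref{e13}, the permanence bounds of Lemma~\ref{lem31} and the global attractivity of Theorem~\ref{thm41} hold word for word for \eqref{e51}. Moreover, since $H(g)=H(f)$ whenever $g\in H(f)$ and $f$ is almost periodic, any equation obtained from \eqref{e51} by a further translation limit is again a hull equation of \eqref{e13}, hence also enjoys permanence, global attractivity and the Lyapunov estimate \eqref{e48}; this closure under translation limits is what the uniqueness step will exploit.

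For existence, fix a hull equation \eqref{e51} determined by a sequence $\{s_p\}\subset\Pi$ with $s_p\to+\infty$, and let $x(t)$ be a solution of \eqref{e13} with initial condition \eqref{e14}. By Lemma~\ref{lem31}, for each $\varepsilon>0$ there is $T_\varepsilon$ with $x_i^m-\varepsilon\le x_i(t)\le x_i^M+\varepsilon$ for $t\ge T_\varepsilon$. Put $x^{(p)}(t)=x(t+s_p)$; then $x^{(p)}$ solves the $s_p$-shifted system, and for every $L>0$ it is defined and confined to the box $[x_i^m-\varepsilon,x_i^M+\varepsilon]$ on $[-L,\infty)_{\mathbb T}$ once $p$ is large, while the equation forces $(x^{(p)})^{\Delta}$ to be uniformly bounded there. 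An Arzel\`a--Ascoli type compactness argument on $\mathbb T$, together with a diagonal extraction over $L\to\infty$, yields a subsequence of $\{x^{(p)}\}$ converging uniformly on compact subsets of $\mathbb T$ to some $p(t)=(p_1(t),\dots,p_n(t))$ which solves \eqref{e51} and satisfies $x_i^m\le p_i(t)\le x_i^M$ for all $t\in\mathbb T$, $i=1,\dots,n$. Hence $0<\inf_{t\in\mathbb T}e^{p_i(t)}\le\sup_{t\in\mathbb T}e^{p_i(t)}<\infty$, so $p$ is a strictly positive solution of \eqref{e51}.

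For uniqueness, let $p$ and $q$ be two strictly positive solutions of a hull equation \eqref{e51}, and set $u_i=p_i-q_i$. Global attractivity of \eqref{e51} (Theorem~\ref{thm41} via Lemma~\ref{lem51}) gives $|u_i(t)|\to0$ as $t\to+\infty$. Form the Lyapunov functional $V(t)$ of Theorem~\ref{thm41} for the pair $(p,q)$; it is built only from the common constants, is finite on all of $\mathbb T$ because $p,q$ are bounded there, and by \eqref{e48} is non-negative and non-increasing on $\mathbb T$. Suppose, for contradiction, that $p\not\equiv q$, say $|u_{i_0}(t_1)|=2\varepsilon_0>0$. Then $V(t_1)\ge 2\varepsilon_0$, and by monotonicity $V(t)\ge 2\varepsilon_0$ for every $t\le t_1$. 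Pick $s_m\to-\infty$ in $\Pi$; passing to a subsequence, let the translated coefficients of \eqref{e51} converge (Lemma~\ref{lem29}) and let $p(\cdot+s_m)\to\bar p$, $q(\cdot+s_m)\to\bar q$ uniformly on compacta. Then $\bar p,\bar q$ are strictly positive solutions of a hull equation of \eqref{e51}, hence of \eqref{e13}, and its Lyapunov functional $\bar V$ satisfies $\bar V(t)=\lim_{m\to\infty}V(t+s_m)$ (uniform convergence on compacta, after changing variables in the memory integrals), so $\bar V(t)\ge 2\varepsilon_0$ for all $t\in\mathbb T$ since $t+s_m\le t_1$ eventually. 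On the other hand, that limiting equation again satisfies $(H_1)$--$(H_4)$, so global attractivity forces $|\bar p_i(t)-\bar q_i(t)|\to0$, whence every term of $\bar V(t)$ tends to $0$ and $\bar V(t)\to0$ as $t\to+\infty$ --- a contradiction. Therefore $p\equiv q$, and Lemma~\ref{lem211} then produces the unique strictly positive almost periodic solution of \eqref{e13}.

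The step I expect to be the main obstacle is making the two translation--limit arguments rigorous on a general time scale: one needs a precise Arzel\`a--Ascoli/compactness principle guaranteeing that uniform-on-compacta limits of $s$-translates of solutions are themselves solutions of the translated (limit) equation and remain inside the permanence box, and one needs the Lyapunov functional together with its monotonicity \eqref{e48} and the global-attractivity bound to pass cleanly to the limit equation --- which is precisely where the preservation of $(H_1)$--$(H_4)$ under hulls (Lemma~\ref{lem51}) is invoked, once for the forward limit in the existence step and once for the backward limit in the uniqueness step.
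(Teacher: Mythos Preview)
Your existence argument matches the paper's in spirit: both pull a bounded forward solution back along a sequence tending to $+\infty$ and extract a subsequential limit via Arzel\`a--Ascoli (the paper starts from a forward solution of the hull equation itself and uses a recurrence sequence $\{\omega_p\}$ with $a_i^*(\cdot+\omega_p)\to a_i^*$, while you start from a solution of \eqref{e13} and use the defining sequence $\{s_p\}$ of the hull; the effect is the same). Your uniqueness argument, however, takes a genuinely different route. The paper argues directly: from $D^+(V^*)^\Delta(t)\le-\sum_i\gamma_i|u_i^*(t)|$ and the boundedness of $V^*$ (which follows since both strictly positive solutions lie in $[x_i^m,x_i^M]$) one obtains $\int_{-\infty}^{t_0}|u_i^*(s)|\,\Delta s<\infty$, whence $u_i^*(t)\to0$ as $t\to-\infty$; this forces each memory term $V_{ik}^*(t)\to0$ at $-\infty$, so the nonincreasing nonnegative function $V^*$ must vanish identically, giving $x^*\equiv y^*$ without any further compactness step. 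You instead assume $V(t_1)\ge2\varepsilon_0$, translate backward along $s_m\to-\infty$, pass to a limiting hull equation, and derive a contradiction from forward global attractivity there. Both arguments are correct; the paper's is shorter and self-contained (no second translation limit, no need to justify that $\bar V(t)=\lim_m V(t+s_m)$), while yours has the conceptual appeal of recycling Theorem~\ref{thm41} wholesale and explicitly using the closure $H(g)=H(f)$ of the hull under further limits, at the cost of the extra compactness and limit-passage steps you correctly flag as the main technical burden.
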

\begin{proof}
By Lemma \ref{lem211}, in order to prove the existence of a unique strictly positive almost
periodic solution of system $\eqref{e13}$, we only need to prove that each hull equations of system
$\eqref{e13}$ has a unique strictly positive solution.

Firstly, we prove the existence of a strictly positive solution of hull equations $\eqref{e51}$.
By the almost periodicity of $a_{i}(t), b_{i}(t)$ and $c_{ij}(t), i, j= 1, 2,\ldots,n, i\neq j$, for an arbitrary sequence $\omega=\{\omega_{p}\}\subset\Pi$ with $\omega_{p}\rightarrow+\infty$ as $p\rightarrow+\infty$, we have, for $i, j= 1, 2,\ldots,n$, $i\neq j$,
\begin{eqnarray*}
a_{i}^{\ast}(t+\omega_{p})\rightarrow a_{i}^{\ast}(t),\,\,b_{i}^{\ast}(t+\omega_{p})\rightarrow b_{i}^{\ast}(t),\,\,c_{ij}^{\ast}(t+\omega_{p})\rightarrow c_{ij}^{\ast}(t)\\
\tau_{i}^{\ast}(t+\omega_{p})\rightarrow \tau_{i}^{\ast}(t),\,\,\delta_{j}^{\ast}(t+\omega_{p})\rightarrow \delta_{j}^{\ast}(t),\,\,p\rightarrow+\infty.
\end{eqnarray*}
Suppose that $x(t)=(x_{1}(t), x_{2}(t),\ldots,x_{n}(t))$ is any solution of hull equations $\eqref{e51}$. Let $\epsilon$ be an arbitrary small positive number. Since $(H_{1})-(H_{3})$ hold, by the proof of Lemma 3.1, then there exists a $t_{1}\in\mathbb{T} (t_{1}\geq t_{0})$ such that
\begin{eqnarray*}
x_{i}^{m}-\epsilon\leq x_{i}(t)\leq x_{i}^{M}+\epsilon,\,\,{\rm for}\ t\geq t_{1},\,\, i= 1, 2,\ldots,n.
\end{eqnarray*}
Write $x_{ip}(t)=x_{i}(t+\omega_{p})$ for $t\geq t_{1}$, $p= 1, 2,\ldots$, $i= 1, 2,\ldots,n$.
For any positive integer $q$, it is easy to see that there exist sequences $\{x_{ip}(t):p\geq q\}$ such that the sequences $\{x_{ip} (t)\}$ has subsequences, denoted by $\{x_{ip} (t)\}$ again, converging on any finite interval of $\mathbb{T}$ as $p\rightarrow+\infty$, respectively. Thus we have sequences $\{y_{i}(t)\}$ such that
\begin{eqnarray*}
x_{ip} (t)\rightarrow y_{i}(t),\,\,{\rm for}\ t\in\mathbb{T},\,\,{\rm as}\ p\rightarrow+\infty,\,\, i= 1, 2,\ldots,n.
\end{eqnarray*}
Since
{\setlength\arraycolsep{2pt}
\begin{eqnarray*}
x_{ip}^{\Delta}(t)=a_{i}^{\ast}(t+\omega_{p})-b_{i}^{\ast}(t+\omega_{p})e^{x_{i}(t+\omega_{p}-\tau_{i}(t+\omega_{p}))}+\sum_{j=1,j\neq i}^{n}c_{ij}^{\ast}(t+\omega_{p})\displaystyle\frac{e^{x_{j}(t+\omega_{p}-\delta_{j}(t+\omega_{p}))}}{d_{ij}+e^{x_{j}(t+\omega_{p}-\delta_{j}(t+\omega_{p}))}},
\end{eqnarray*}}
by use of Lemma 3.5 in \cite{24}, we have
{\setlength\arraycolsep{2pt}
\begin{eqnarray*}
y_{i}^{\Delta}(t)=a_{i}^{\ast}(t)-b_{i}^{\ast}(t)e^{y_{i}(t-\tau_{i}^{\ast}(t))}+\sum_{j=1,j\neq i}^{n}c_{ij}^{\ast}(t)\displaystyle\frac{e^{y_{j}(t-\delta_{j}^{\ast}(t))}}{d_{ij}+e^{y_{j}(t-\delta_{j}^{\ast}(t))}},\,\,i=1,2,\ldots,n.
\end{eqnarray*}}
We can easily see that $y(t)=(y_{1}(t), y_{2}(t),\ldots,y_{n}(t))$ is a solution of system $\eqref{e51}$ and $x_{i}^{m}-\epsilon\leq y_{i}(t)\leq x_{i}^{M}+\epsilon$ for $t\in\mathbb{T}$, $i= 1, 2,\ldots,n$. Since $\epsilon$ is an arbitrary small positive number, it follows that $x_{i}^{m}\leq y_{i}(t)\leq x_{i}^{M}$ for $t\in\mathbb{T}$, $i= 1, 2,\ldots,n$, which implies that each of the hull equations $\eqref{e51}$ has at least
one strictly positive solution.

Now, we prove the uniqueness of the strictly positive solution of each of the hull
equations $\eqref{e51}$. Suppose that the hull equations $\eqref{e51}$ have two arbitrary strictly positive solutions $x^{\ast}(t)=(x_{1}^{\ast}(t), x_{2}^{\ast}(t),\ldots,x_{n}^{\ast}(t))$ and $y^{\ast}(t)=(y_{1}^{\ast}(t), y_{2}^{\ast}(t),\ldots,y_{n}^{\ast}(t))$. Let $u_{i}^{\ast}(t)=x_{i}^{\ast}(t)-y_{i}^{\ast}(t)$, $i=1, 2,\ldots,n$.
Consider a Lyapunov function
\[V^{\ast}(t)=\sum_{i=1}^{n}V_{i}^{\ast}(t),\]
where\[V_{i}^{\ast}(t)=V_{i1}^{\ast}(t)+V_{i2}^{\ast}(t)+V_{i3}^{\ast}(t)+V_{i4}^{\ast}(t)+V_{i5}^{\ast}(t),\]
\begin{eqnarray*}
&&V_{i1}^{\ast}(t)=|u_{i}^{\ast}(t)|,\\
&&V_{i2}^{\ast}(t)=\frac{(b_{i}^{u}e^{x_{i}^{M}})^{2}[2\bar{\mu} b_{i}^{u}e^{x_{i}^{M}}+1]}{1-\tau^{\Delta}}\int_{-2\tau^{+}}^{-\tau^{-}}\int_{s+t}^{t}|u_{i}^{\ast}(r)|\Delta r\Delta s,\\
&&V_{i3}^{\ast}(t)={\displaystyle\sum_{j=1,j\neq i}^{n}\frac{c_{ij}^{u}e^{x_{j}^{M}}b_{i}^{u}e^{x_{i}^{M}}[2\bar{\mu} b_{i}^{u}e^{x_{i}^{M}}+1]}
{(d_{ij}+e^{x_{j}^{m}})^{2}(1-\delta^{\Delta})}\int_{-\tau^{+}-\delta^{+}}^{-\delta^{-}}\int_{s+t}^{t}|u_{j}^{\ast}(r)}|\Delta r\Delta s,\\
&&V_{i4}^{\ast}(t)={\displaystyle\sum_{j=1,j\neq i}^{n}\frac{c_{ij}^{u}e^{2x_{j}^{M}}b_{j}^{u}(2\bar{\mu} b_{i}^{u}e^{x_{i}^{M}}+1)}{(d_{ij}+e^{x_{j}^{m}})^{2}(1-\tau^{\Delta})}\int_{-\tau^{+}-\delta^{+}}^{-\tau^{-}}\int_{s+t}^{t}|u_{j}^{\ast}(r)|\Delta r\Delta s},\\
&&V_{i5}^{\ast}(t)={\displaystyle\sum_{j=1,j\neq i}^{n}\frac{(c_{ij}^{u}e^{x_{j}^{M}})^{2}(2\bar{\mu} b_{i}^{u}e^{x_{i}^{M}}+1)}{(d_{ij}+e^{x_{j}^{m}})^{4}(1-\delta^{\Delta})}\int_{-2\delta^{+}}^{-\delta^{-}}\int_{s+t}^{t}|u_{i}^{\ast}(r)|\Delta r\Delta s},
\end{eqnarray*}

Similar to the proof of Theorem 4.1, we have
\begin{eqnarray}\label{e52}
D^{+}(V^{\ast})^{\Delta}(t)\leq-\sum_{i=1}^{n}\gamma_{i}|u_{i}^{\ast}(t)|.
\end{eqnarray}
From $\eqref{e52}$, we get
\begin{eqnarray*}
D^{+}(V^{\ast})^{\Delta}(t)\leq0,\,\,t\in\mathbb{T},
\end{eqnarray*}
and hence
\begin{eqnarray*}
V^{\ast}(t)>V^{\ast}(t_{0}),\,\,t\leq t_{0},\,\,t_{0}\in\mathbb{T}.
 \end{eqnarray*}
Then we have
\begin{eqnarray*}
\int_{t}^{t_{0}}\gamma_{i}|u_{i}^{\ast}(s)|\Delta s\leq V^{\ast}(t_{0})-V^{\ast}(t),\,\,t\leq t_{0},\,\,t_{0}\in\mathbb{T},\,\,i=1, 2,\ldots n.
 \end{eqnarray*}
Consequently,
\begin{eqnarray*}
\int_{-\infty}^{t_{0}}|u_{i}^{\ast}(s)|\Delta s\leq \infty,\,\,t_{0}\in\mathbb{T},
 \end{eqnarray*}
and $u_{i}^{\ast}(t)=x_{i}^{\ast}(t)-y_{i}^{\ast}(t)\rightarrow0$ for $t\rightarrow-\infty$, $i=1, 2,\ldots n$.

For $i=1,2,\ldots,n,$ let
\begin{eqnarray*}
P_{i}&=&1+\frac{(b_{i}^{u}e^{x_{i}^{M}})^{2}[2\bar{\mu} b_{i}^{u}e^{x_{i}^{M}}+1](2\tau^{+})^{2}}{1-\tau^{\Delta}}+{\displaystyle\sum_{j=1,j\neq i}^{n}\frac{c_{ij}^{u}e^{x_{j}^{M}}b_{i}^{u}e^{x_{i}^{M}}[2\bar{\mu} b_{i}^{u}e^{x_{i}^{M}}+1](\tau^{+}+\delta^{+})^{2}}
{(d_{ij}+e^{x_{j}^{m}})^{2}(1-\delta^{\Delta})}}\\
&&+{\displaystyle\sum_{j=1,j\neq i}^{n}\frac{c_{ij}^{u}e^{2x_{j}^{M}}b_{j}^{u}(2\bar{\mu} b_{i}^{u}e^{x_{i}^{M}}+1)(\tau^{+}+\delta^{+})^{2}}{(d_{ij}+e^{x_{j}^{m}})^{2}(1-\tau^{\Delta})}}+{\displaystyle\sum_{j=1,j\neq i}^{n}\frac{(c_{ij}^{u}e^{x_{j}^{M}})^{2}(2\bar{\mu} b_{i}^{u}e^{x_{i}^{M}}+1)(2\delta^{+})^{2}}{(d_{ij}+e^{x_{j}^{m}})^{4}(1-\delta^{\Delta})}}.
\end{eqnarray*}
For arbitrary $\epsilon>0$, there exists a positive integer $K_{1}$ such that
\begin{eqnarray*}
|x_{i}^{\ast}(t)-y_{i}^{\ast}(t)|<\frac{\epsilon}{P_{i}},\,\,\forall t<-K_{1},\,\,i=1, 2,\ldots n.
\end{eqnarray*}
Hence, for $i, j=1, 2,\ldots n$ with $i\neq j$, one has
\begin{eqnarray*}
&&V_{i1}^{\ast}(t)\leq\frac{\epsilon}{P_{i}},\,\,\forall t<-K_{1},\\
&&V_{i2}^{\ast}(t)\leq\frac{(b_{i}^{u}e^{x_{i}^{M}})^{2}[2\bar{\mu} b_{i}^{u}e^{x_{i}^{M}}+1](2\tau^{+})^{2}}{1-\tau^{\Delta}}\frac{\epsilon}{P_{i}},\,\,\forall t<-K_{1},\\
&&V_{i3}^{\ast}(t)\leq{\displaystyle\sum_{j=1,j\neq i}^{n}\frac{c_{ij}^{u}e^{x_{j}^{M}}b_{i}^{u}e^{x_{i}^{M}}[2\bar{\mu} b_{i}^{u}e^{x_{i}^{M}}+1](\tau^{+}+\delta^{+})^{2}}
{(d_{ij}+e^{x_{j}^{m}})^{2}(1-\delta^{\Delta})}\frac{\epsilon}{P_{i}}},\,\,\forall t<-K_{1},\\
&&V_{i4}^{\ast}(t)\leq{\displaystyle\sum_{j=1,j\neq i}^{n}\frac{c_{ij}^{u}e^{2x_{j}^{M}}b_{j}^{u}(2\bar{\mu} b_{i}^{u}e^{x_{i}^{M}}+1)(\tau^{+}+\delta^{+})^{2}}{(d_{ij}+e^{x_{j}^{m}})^{2}(1-\tau^{\Delta})}\frac{\epsilon}{P_{i}}},\,\,\forall t<-K_{1},\\
&&V_{i5}^{\ast}(t)\leq{\displaystyle\sum_{j=1,j\neq i}^{n}\frac{(c_{ij}^{u}e^{x_{j}^{M}})^{2}(2\bar{\mu} b_{i}^{u}e^{x_{i}^{M}}+1)(2\delta^{+})^{2}}{(d_{ij}+e^{x_{j}^{m}})^{4}(1-\delta^{\Delta})}\frac{\epsilon}{P_{i}}},\,\,\forall t<-K_{1},
\end{eqnarray*}
which imply that
\begin{eqnarray*}
V^{\ast}(t)<\epsilon,\,\,\forall t<-K_{1}.
\end{eqnarray*}
So,
\begin{eqnarray*}
\lim_{t\rightarrow-\infty}V^{\ast}(t)=0.
\end{eqnarray*}
Note that $V^{\ast}(t)$ is a nonincreasing nonnegative function on $\mathbb{T}$, and that $V^{\ast}(t)=0$. That is
\begin{eqnarray*}
x_{i}^{\ast}(t)=y_{i}^{\ast}(t),\,\,t\in\mathbb{T},\, i=1,2,\ldots,n.
 \end{eqnarray*}
Therefore, each of the hull equations $\eqref{e51}$ has a unique strictly positive solution.
In view of the previous discussion, any of the hull equations $\eqref{e51}$ has a
unique strictly positive solution. By Lemma \ref{lem211}, system $\eqref{e13}$ has a unique strictly positive
almost periodic solution. The proof is completed.
\end{proof}

\section{An example}

\setcounter{equation}{0}
{\setlength\arraycolsep{2pt}}
 \indent

Consider the following multispecies Lotka-Volterra mutualism system with time delays on almost periodic time scale $\mathbb{T}$:
{\setlength\arraycolsep{2pt}\begin{eqnarray}\label{e61}
x_{i}^{\Delta}(t)=a_{i}(t)-b_{i}(t)e^{x_{i}(t-\tau_{i}(t))}+\sum_{j=1,j\neq i}^{2}c_{ij}(t)\displaystyle\frac{e^{x_{j}(t-\delta_{j}(t))}}{d_{ij}+e^{x_{j}(t-\delta_{j}(t))}},\,\,i=1,2,\,\,t\in{\mathbb{T}}.
\end{eqnarray}}

\begin{example}
When we take $\mathbb{T}=\mathbb{R }$, then $\mu(t)=0$. Let
\[a_{1}(t)=0.7-0.02\sin(\sqrt{2}t),\,\,a_{2}(t)=0.61-0.02\sin(\sqrt{3}t),\]
\[b_{1}(t)=0.58-0.01\cos(\sqrt{2}t),\,\,b_{2}(t)=0.55-0.01\sin(\sqrt{2}t),\]
\[\tau_{1}(t)=0.003-0.001\cos t,\,\,\tau_{2}(t)=0.002+0.001\sin t,\]
 \[\delta_{1}(t)=0.004-0.002\cos t,\,\,\delta_{2}(t)=0.002,\]
\[
(c_{ij}(t))_{2\times 2}= \left(
\begin{array}{cccc}
0.06+0.05\sin(2t) & 0.005+0.005\cos(\sqrt{5}t) \\
0.15+0.02\cos(\sqrt{3}t) & 0.08+0.02\sin(\sqrt{2}t) \\
\end{array}\right),\]
\[
(d_{ij})_{2\times 2}= \left(
\begin{array}{cccc}
1.2 & 1 \\
1 & 1.1 \\
\end{array}\right),\]
 then
\[{a_{1}^u}=0.72,\,\,{a_{1}^l}=0.68,\,\, {a_{2}^u}=0.63,\,\, {a_{2}^l}=0.59,\,\,{b_{1}^u}=0.59,\,\,{b_{1}^l}=0.57,\]
\[{b_{2}^u}=0.56,\,\, {b_{2}^l}=0.54,\,\,{c_{11}^u}=0.11,\,\, {c_{12}^u}=0.01,\,\,{c_{21}^u}=0.17,\,\,{c_{22}^u}=0.1.\]
\[\tau^{+}=0.004,\,\,\tau^{-}=0.001,\,\,\delta^{+}=0.006,\,\,\delta^{-}=0.002,\]
\[\tau^{\Delta}=\max\limits_{1\leq i\leq 2}\sup\limits_{t\in\mathbb{R}}\{\tau_{i}'(t)\}=0.001,\,\,\delta^{\Delta}=\max\limits_{1\leq j\leq 2}\sup\limits_{t\in\mathbb{R}}\{\delta_{j}'(t)\}=0.002.\]

By calculating, we have
\[
x_{1}^{M}=\ln\bigg\{\frac{ a_{1}^{u}+c_{12}^{u}}{b_{1}^{l}}\exp\{( a_{1}^{u}+c_{12}^{u})\tau^{+}\}\bigg\}\approx0.250,\,\,x_{1}^{m}=\ln\bigg\{\frac{a_{1}^{l}}{b_{1}^{u}}e^{-b_{1}^{u}e^{x_{1}^{M}}\tau^{+}}\bigg\}\approx0.139,\]
\[x_{2}^{M}=\ln\bigg\{\frac{ a_{2}^{u}+c_{21}^{u}}{b_{2}^{l}}\exp\{( a_{2}^{u}+c_{21}^{u})\tau^{+}\}\bigg\}\approx0.396,\,\,x_{2}^{m}=\ln\bigg\{\frac{a_{2}^{l}}{b_{2}^{u}}e^{-b_{2}^{u}e^{x_{2}^{M}}\tau^{+}}\bigg\}\approx0.052,\]
then
 \begin{eqnarray*}
\gamma_{1}\approx0.604>0,\,\,\gamma_{2}\approx0.527>0.
\end{eqnarray*}
Thus, $(H_{1})-(H_{4})$ are satisfied. According to Theorems 3.1, Theorem 4.2 and Theorem 5.1, system
(6.1) has an unique almost periodic solution, which is globally attractive.
\end{example}

\begin{example}
When we take $\mathbb{T}=\mathbb{Z }$, then $\mu(t)=1$. Let
\[a_{1}(t)=0.3-0.02\sin(\sqrt{2}t),\,\,a_{2}(t)=0.25-0.02\sin(\sqrt{3}t),\]
\[b_{1}(t)=0.27,\,\,b_{2}(t)=0.22,\,\,\tau_{1}(t)=\frac{2+(-1)^{t}}{1000},\,\,\tau_{2}(t)=0.001,\]
 \[\delta_{1}(t)=\frac{3+(-1)^{t}}{1000},\,\,\delta_{2}(t)=0.002,\]
\[
(c_{ij}(t))_{2\times 2}= \left(
\begin{array}{cccc}
0.03+0.02\sin(2t) & 0.01+0.01\cos(\sqrt{5}t) \\
0.006+0.004\cos(\sqrt{3}t) & 0.08+0.02\sin(\sqrt{2}t) \\
\end{array}\right),\]
\[
(d_{ij})_{2\times 2}= \left(
\begin{array}{cccc}
1.2 & 1 \\
1 & 1.1 \\
\end{array}\right),\]
 then
\[{a_{1}^u}=0.32,\,\,{a_{1}^l}=0.28,\,\, {a_{2}^u}=0.27,\,\, {a_{2}^l}=0.23,\,\,{b_{1}^u}={b_{1}^l}=0.27,\]
\[{b_{2}^u}={b_{2}^l}=0.22,\,\,{c_{11}^u}=0.05,\,\, {c_{12}^u}=0.02,\,\,{c_{21}^u}=0.01,\,\,{c_{22}^u}=0.1,\]
\[\tau^{+}=0.003,\,\,\tau^{-}=0.001,\,\,\delta^{+}=0.004,\,\,\delta^{-}=0.002,\]
\[\tau^{\Delta}=\max\limits_{1\leq i\leq 2}\sup\limits_{t\in\mathbb{Z}}\{\Delta\tau_{i}(t)\}=0.002,\,\,\delta^{\Delta}=\max\limits_{1\leq j\leq 2}\sup\limits_{t\in\mathbb{Z}}\{\Delta\delta_{j}(t)\}=0.002.\]

By calculating, we have
\[
x_{1}^{M}=\ln\bigg\{\frac{ a_{1}^{u}+c_{12}^{u}}{b_{1}^{l}}\exp\{-\tau^{+}\log(1-(a_{1}^{u}+c_{12}^{u}))\}\bigg\}\approx0.232,\]
\[x_{1}^{m}=\ln\bigg\{\frac{a_{1}^{l}}{b_{1}^{u}}\exp\bigg\{\tau^{+}\log(1-b_{1}^{u}e^{x_{1}^{M}})\bigg\}\bigg\}\approx0.035,\]
\[x_{2}^{M}=\ln\bigg\{\frac{ a_{2}^{u}+c_{21}^{u}}{b_{2}^{l}}\exp\{-\tau^{+}\log(1-(a_{2}^{u}+c_{21}^{u}))\}\bigg\}\approx0.242,\]
\[x_{i}^{m}=\ln\bigg\{\frac{a_{2}^{l}}{b_{2}^{u}}\exp\bigg\{\tau^{+}\log(1-b_{2}^{u}e^{x_{2}^{M}})\bigg\}\bigg\}\approx0.044,\]
then
 \begin{eqnarray*}
\gamma_{1}\approx0.041>0,\,\,\gamma_{2}\approx0.059>0.
\end{eqnarray*}
Thus, $(H_{1})$-$(H_{4})$ are satisfied. According to Theorems \ref{thm31}, Theorem \ref{thm41} and Theorem \ref{thm51}, system
\eqref{e61} has an unique almost periodic solution, which is globally attractive.
\end{example}

\end{document}